\newif\ifpersonal
\newtheorem{thm}[subsection]{Theorem}
\newtheorem{theorem}[subsection]{Theorem}
\newtheorem{prop}[subsection]{Proposition}
\newtheorem{cor}[subsection]{Corollary}
\newtheorem{lem}[subsection]{Lemma}
\newtheorem{lemma}[subsection]{Lemma}
\theoremstyle{definition}
\newtheorem{definition}[subsection]{Definition}
\newtheorem{recollection}[subsection]{Recollection}
\newtheorem{rem}[subsection]{Remark}
\newtheorem{example}[subsection]{Example}
\newtheorem{construction}[subsection]{Construction}
\numberwithin{equation}{subsection}
\newtheorem{notation}[subsection]{Notation}
\patchcmd{\section}{\scshape}{\bfseries}{}{}
\renewcommand{\@secnumfont}{\bfseries}
\newcommand{\sep}{\mathrm{sep}}
\newcommand{\cl}{\mathrm{cl}}
\newcommand{\fm}{\mathfrak m}
\newcommand{\sw}{\mathrm{sw}}
\newcommand{\red}{\mathrm{red}}
\newcommand{\sO}{\mathscr{O}}
\newcommand{\cK}{\mathcal K}
\newcommand{\cZ}{\mathcal Z}
\newcommand{\cF}{\mathcal F}
\newcommand{\bZ}{\mathbb Z}
\newcommand{\sF}{\mathcal{F}}
\newcommand{\bA}{\mathbb A}
\newcommand{\bQ}{\mathbb Q}
\newcommand{\bT}{\mathbb T}
\newcommand{\cE}{\mathcal E}
\newcommand{\cO}{\mathcal O}
\newcommand{\cC}{\mathcal C}
\newcommand{\cH}{\mathcal H}
\newcommand{\cL}{\mathcal L}
\newcommand{\cI}{\mathcal I}
\newcommand{\bN}{\mathbb N}
\newcommand{\Qlbar}{\overline{\bQ}_\ell}  
\DeclareMathOperator{\CoInd}{CoInd}
\DeclareMathOperator{\Sw}{Sw}
\DeclareMathOperator{\rk}{rk}
\DeclareMathOperator{\Ker}{Ker}
\DeclareMathOperator{\sbar}{\overline{s}}
\DeclareMathOperator{\Cons}{Cons}
\DeclareMathOperator{\lc}{lc}
\DeclareMathOperator{\Coh}{Coh}
\DeclareMathOperator{\length}{length}
\DeclareMathOperator{\tors}{tors}
\DeclareMathOperator{\Gal}{Gal}
\DeclareMathOperator{\Spec}{Spec}
\DeclareMathOperator{\Ima}{Im}
\DeclareMathOperator{\Weil}{Weil}
\DeclareMathOperator{\op}{op}
\DeclareMathOperator{\Fun}{Fun}
\DeclareMathOperator{\Loc}{Loc}
\DeclareMathOperator{\LC}{Loc}
\title{Bounding ramification with coherent sheaves}
\begin{document}

\author[H.Hu]{Haoyu Hu} 
\address{School of Mathematics, Nanjing University, Hankou Road 22, Nanjing, China}
\email{huhaoyu@nju.edu.cn, huhaoyu1987@gmail.com}

\author[J.-B. Teyssier]{Jean-Baptiste Teyssier}
\address{Institut de Math\'ematiques de Jussieu, 4 place Jussieu, Paris, France}
\email{jean-baptiste.teyssier@imj-prg.fr}

\begin{abstract}
Given a coherent sheaf $\cE$ on a scheme of finite type $X$ over a perfect field, we introduce a category of complexes of étale sheaves on $X$ with logarithmic conductors bounded by $\cE$ and study its compatibilities with finite push-forward.
\end{abstract}

\maketitle

\setcounter{tocdepth}{1}
\tableofcontents

\section{Introduction}

The goal of this paper is to use coherent sheaves to bound  the wild ramification of complexes of étale sheaves.
\medskip 

Let $X$ be a normal scheme of finite type over a perfect field $k$ of characteristic $p>0$, let $D\subset X$ be an effective Cartier divisor  and put $U:=X-D$.
Let $\Lambda$ be a finite local ring of residue characteristic $\ell \neq p$ and let $\cL$ be a locally constant constructible sheaf of free $\Lambda$-modules of finite type on $U$.
A useful way to bound the wild ramification of $\cL$ along $D$ is to look at the Swan conductors of the restrictions of 
$\cL$ to curves $C$ passing through the points of $D$.
Namely, we say following Deligne \cite{Esnault_Kerz_Deligne} that $\cL$ has Swan conductors bounded by $D$ if for every smooth connected curve $C$ over $k$ and every morphism $f : C\to X$ over $k$ with $f(C)\nsubseteq D$, we have for every $x\in C$,
\begin{equation}\label{sw_bounded}
\Sw_x \cL|_{C\cap U}\leq m_x(f^*D) 
\end{equation}
where $\Sw_x \cL|_{C\cap U}$ is the Swan conductor of  $\cL|_{C\cap U}$ at $x\in C$ in the sense of \cite{CL} and $m_x(f^*D)$ is the multiplicity of $f^*D$ at $x$.
When $X$ is smooth over $k$ and when $D$ has normal crossings, Abbes and Saito's ramification theory \cite{RamImperfect} implies that the poset of effective Cartier divisors supported on $D$ and satisfying \eqref{sw_bounded} has a minimal element, called the Swan divisor \cite{Barrientos,Hu_log_ram}.\medskip

The interest of this notion comes from the observation that objects with bounded Swan conductors tend to exhibit behaviors similar to their characteristic zero counterparts.
For example in the tame case, that is when $D=0$, a Lefschetz theorem for the tame fundamental group is known  \cite{DriDel,Esnault_Kindler_tame}.
When $k$ is finite, Hiranouchi proved without assumption on the singularities of $X$ that there are only a finite number of étale coverings on $X$ with bounded degree and wild ramification bounded by a given effective Cartier divisor at infinity \cite{Hiranouchi}, which is reminiscent to the finite generation of the fundamental group of a complex algebraic variety.
Also, the Betti numbers of locally constant constructible sheaves with bounded rank and wild ramification are bounded \cite{HuTeyssier_bound_etale}.
In characteristic zero, this phenomenon was observed for irregular flat bundles on surfaces in \cite{HuTeyssier_bound_diff}. \medskip

Note however that effective Cartier divisors are not sensitive to the wild ramification in codimension $\geq 2$ displayed by sheaves which are not locally constant.
This confines the characteristic $p>0$ side of the above analogies to locally constant sheaves on normal varieties, while the most natural set-up  is sometimes that of arbitrary complexes of constructible sheaves on arbitrary schemes of finite type.
To solve this problem, we extend the above definition by replacing effective Cartier divisors with arbitrary coherent sheaves.
For a scheme of finite type $X$ over $k$, let $\bQ[\Coh(X)]$ be the free $\bQ$-vector space on the set of isomorphism classes of coherent sheaves on $X$.
Given $\cE\in \bQ[\Coh(X)]$ and  $\Lambda$ a finite local ring of residue characteristic $\ell \neq p$, we introduce a full subcategory 
$$
D_{ctf}^b(X,\cE,\Lambda) \subset D_{ctf}^b(X,\Lambda)
$$ 
of tor finite complexes with logarithmic conductors bounded by $\cE$.
In a nutshell, the right hand side of \eqref{sw_bounded} gets replaced by the length of the torsion part of $f^*\cE$ at $x$ and the Swan conductor gets replaced by the highest ramification slope of $\cL|_{C\cap U}$ at $x$, also called the \textit{logarithmic conductor} of $\cL|_{C\cap U}$ at $x$. 
See \cref{bounded_conductor} for a precise definition.\medskip

The main result of this paper is the following compatibility of $D_{ctf}^b(X,\cE,\Lambda)$ with finite push-forward :

\begin{thm}[\cref{finite_direct_image}]\label{thm_1}
Let $f : Y\to X$ be a finite morphism between  schemes of finite type over $k$.
Let $\Sigma$ be a stratification of $Y$ and let $\cE\in \bQ[\Coh(Y)]$.
Then there exists $\cE'\in \bQ[\Coh(X)]$ such that for every $\Sigma$-stratified constructible complex $\cK\in D_{ctf}^b(Y,\cE,\Lambda)$, we have $f_{\ast}\cK \in D^b_{ctf}(X,\cE',\Lambda)$.
\end{thm}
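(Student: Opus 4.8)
The plan is to reduce to the curve-testing criterion that defines membership in $D^b_{ctf}(X,\cE',\Lambda)$, using that tor-finiteness and constructibility of $f_{\ast}\cK$ are automatic because $f$ is finite. So I fix a smooth connected curve $g\colon C\to X$ over $k$ with $g(C)$ not contained in the non-lisse locus of $f_{\ast}\cK$, and a closed point $x\in C$, and aim to bound the logarithmic conductor of $(f_{\ast}\cK)|_{C}$ at $x$ by $\length_{x}\big((g^{\ast}\cE')_{\tors}\big)$, for a sheaf $\cE'$ produced uniformly in $(C,g)$.

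The first step is finite (hence proper) base change. Forming the Cartesian square with $W:=C\times_{X}Y$, $f_{C}\colon W\to C$ and $g_{Y}\colon W\to Y$, one has $(f_{\ast}\cK)|_{C}\cong (f_{C})_{\ast}\,g_{Y}^{\ast}\cK$. Since étale sheaves only see $W_{\red}$ and the slopes are invariants of the generic fibre, I replace $W$ by the normalisation $\coprod_{i}C_{i}$ of $W_{\red}$, obtaining finitely many smooth curves $C_{i}$ together with finite maps $h_{i}\colon C_{i}\to C$ and maps $g_{i}\colon C_{i}\to Y$ with $f\circ g_{i}=g\circ h_{i}$. The $g_{i}$ are test curves for $\cK$ (those with image in the non-lisse locus are discarded, as they affect only finitely many points), so the hypothesis bounds the logarithmic conductor of $\cK|_{C_{i}}$ at each point $c_{i}$ above $x$ by $\length_{c_{i}}\big((g_{i}^{\ast}\cE)_{\tors}\big)$. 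The discrepancy between $(f_{C})_{\ast}g_{Y}^{\ast}\cK$ and $\bigoplus_{i}(h_{i})_{\ast}(\cK|_{C_{i}})$ is supported on the non-reduced and non-normal loci of $W$; being a bounded correction by skyscraper and tame contributions, it does not raise the highest slope and can be absorbed into the eventual $\cE'$.

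The local heart is to bound the highest logarithmic slope of $(h_{i})_{\ast}(\cK|_{C_{i}})$ at $x$. Over the complete local ring at $x$ and the henselian traits above it, $(h_{i})_{\ast}$ becomes induction of Galois representations along the finite extension $L_{i}/K$ of local fields attached to $h_{i}$, and the conductor–discriminant formalism, read through the Herbrand function of $L_{i}/K$, bounds the highest slope of the induced representation by $e_{i}^{-1}\big(\lambda_{i}+\delta_{i}\big)$, where $e_{i}$ is the ramification index of $h_{i}$ at $c_{i}$, $\lambda_{i}$ is the highest slope of $\cK|_{C_{i}}$, and $\delta_{i}$ measures the logarithmic different of $h_{i}$. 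The term linear in $\delta_{i}$ carries the generic rank of $\cK$ as a coefficient, which is exactly where the stratification $\Sigma$ intervenes: it bounds this rank uniformly. The crucial compatibility is that the different of $f_{C}$ is the $g_{Y}$-pullback of a fixed ramification sheaf $\mathcal{R}_{f}$ of $f$ on $Y$, built from $\Omega_{Y/X}$ (or the different ideal $\mathfrak{d}_{Y/X}$), so that $\delta_{i}\le \length_{c_{i}}\big((g_{i}^{\ast}\mathcal{R}_{f})_{\tors}\big)$ up to a bounded normalisation term.

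It remains to package the estimate and define $\cE'$. Because $f\circ g_{i}=g\circ h_{i}$, for any coherent sheaf $\cE'$ on $X$ one has $g_{i}^{\ast}f^{\ast}\cE'=h_{i}^{\ast}g^{\ast}\cE'$, whose torsion length at $c_{i}$ equals $e_{i}\cdot\length_{x}\big((g^{\ast}\cE')_{\tors}\big)$, since torsion length is multiplied by the degree under a ramified pullback along a trait. Thus the factor $e_{i}^{-1}$ is exactly compensated, and it suffices to choose $\cE'$ on $X$ whose pullback $f^{\ast}\cE'$ dominates, curve by curve in torsion length, the class $a\,\cE+b\,\mathcal{R}_{f}$ for constants $a,b$ depending only on $\Sigma$; such an $\cE'$ is produced by pushing forward along the finite map $f$ with large enough multiplicity. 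I expect the two genuine difficulties to be the local induction inequality for the highest slope together with its wild-different term, and the uniformity, namely exhibiting a single $\cE'$ that bounds the ramification of $C\times_{X}Y\to C$ simultaneously for all test curves $C$; the non-reduced and non-normal bookkeeping of the base-change step is the remaining technical nuisance.
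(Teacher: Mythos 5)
You have three genuine gaps, and the central one is quantitative. Your local estimate is the wrong tool: bounding the highest logarithmic slope of the induced module by $e_i^{-1}(\lambda_i+\delta_i)$, with the different term $\delta_i$ weighted by the generic rank, is a conductor--discriminant (Swan-type) bound, and its rank dependence is fatal here because, contrary to what you assert, the stratification $\Sigma$ does \emph{not} bound ranks. The category $D^b_{\Sigma,tf}(Y,\cE,\Lambda)$ contains $\cL^{\oplus N}$ for every $N$, all with log conductors bounded by the same $\cE$, so any bound on $\lc_x((f_{*}\cK)|_C)$ growing with $\rk\cK$ cannot be absorbed into a single fixed $\cE'$. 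What the theorem needs --- and what the paper proves as \cref{slope_induction_final} (geometrically, \cref{bound_direct_image_dim_1}) --- is the rank-free maximum bound $\lc_K(\CoInd^{G_L}_{G_K}(M))\leq\max(\lc_{L/K},\lc_L(M))$, obtained by computing invariants of the coinduction (\cref{invariant_codinduction}), comparing the logarithmic filtrations of $K$ and $K'$ through Herbrand's function (\cref{equality_ramification_group}) and using $\varphi_{K'/K}(r)\leq r$ (\cref{phi_inequality}); the ``different'' of the covering enters only through $\lc_{L/K}$, i.e.\ through $f_{*}j_!\Lambda$, which depends on $f$ alone. In the paper, $\Sigma$'s actual role is entirely different: it fixes, independently of $\cK$, a closed subset containing the strata of dimension $<n$, so that a boundary divisor $D$ can be chosen once and for all in an induction on $\dim Y$.

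The other two gaps are structural. First, your uniformity packaging is unsubstantiated: coherent pushforward does not commute with the non-flat base change $C\to X$, so the torsion length of $g^{*}f_{*}\cE$ at $x$ need not dominate the contributions $\length_{c_i}((g_i^{*}\cE)_{\tors})$, and your inequality $\delta_i\leq\length_{c_i}((g_i^{*}\mathcal{R}_f)_{\tors})$ ``up to bounded normalisation'' hides exactly the hard point for test curves meeting the branch locus non-transversally, where wild differents and conductors do not pull back submultiplicatively for free. The paper avoids all per-curve coherent bookkeeping: after normalizing $Y$, Noether-normalizing so that $X$ is smooth, and factoring $f$ via \cref{generic_factorization} into a radicial part (handled by Hu's theorem $G^{dr}_{K,\log}\subseteq\gamma(G^{r}_{L,\log})$ in \cref{finite_direct_radicial} --- a case your induction-of-representations formalism cannot see, since radicial covers give isomorphisms of Galois groups rather than inductions) and a generically étale part, it proves one divisor-level inequality $C(f_{*}j_!\cL)\leq C(f_{*}j_!\Lambda)+f_{*}T(\cE)+f_{*}E$ via \cref{finite_direct_image_lcc_case}, and only then converts this into a bound valid for \emph{all} test curves by the Hu--Leal semicontinuity underlying \cref{bounded_ramification_ex}-(2) (resting on \cref{semi_continuity} and \cref{Hu_generic_swan_via_curve}). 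Second, \cref{bounded_conductor} quantifies over every morphism $C\to X$ from a smooth curve, including those landing inside the non-lisse locus of $f_{*}\cK$; you discard these at the outset, but the bound is required for them too, and this is precisely why the paper argues by recursion on $\dim Y$, applying the inductive hypothesis to $f_D: E\to D$ and gluing with \cref{many_properties}-\ref{devissage_bound}. As written, your proposal proves nothing for such curves, is non-uniform in the rank, and omits the purely inseparable case.
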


The proof of \cref{thm_1} reduces to the analysis of the compatibility of Abbes and Saito's logarithmic conductor \cite{RamImperfect} with finite push-forward.
To state its main output,  recall that for a normal scheme of finite type $X$ and for a constructible sheaf of $\Lambda$-modules $\cF$  of finite tor-dimension on $X$, we let $LC_X(\cF)$ be the Weil divisor of $X$ whose multiplicity at $Z$ is the generic logarithmic conductor of $\cF$ along $Z$, in the sense of Abbes and Saito (see \cref{conductor_divisor_def}).
Then we have the following 
\begin{thm}[\cref{finite_direct_image_lcc_case}]\label{thm_2}
Let $f:Y\to X$ be a finite morphism of normal schemes of finite type over $k$. 
Let $D$ be a reduced effective Cartier divisor on $X$ and put $U:=X-D$.
Define $E:=f^{-1}(D)$ and put $j : V:=Y-E\hookrightarrow Y$.
For every $\cL\in \Loc_{tf}(V,\Lambda)$, we have
$$
LC_X(f_{*}j_!\cL)\leq LC_X(f_{*}j_!\Lambda)+ f_* LC_Y(j_!\cL) \ .
$$ 
\end{thm}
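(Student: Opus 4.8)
The plan is to prove the inequality coefficient by coefficient. Since both sides are Weil divisors on $X$ and $LC_X(-)=\sum_Z \lc_Z(-)\cdot Z$ by definition (\cref{conductor_divisor_def}), it suffices to fix a prime divisor $Z\subset X$ with generic point $\eta_Z$ and bound the multiplicity of $LC_X(f_*j_!\cL)$ at $Z$. Because $\lc_Z$ depends only on the restriction of a sheaf to a neighbourhood of $\eta_Z$, I would henselize at $\eta_Z$: writing $K$ for the fraction field of $\cO_{X,\eta_Z}^h$ — a henselian discrete valuation field with \emph{imperfect} residue field $\kappa(Z)$ — the fibre $f^{-1}(\eta_Z)$ is a finite set of generic points of the prime divisors $W_1,\dots,W_r$ of $Y$ lying over $Z$, giving finite separable extensions $L_i/K$ with residue degree $f_i=[\kappa(W_i):\kappa(Z)]$. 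Finiteness of $f$ yields a Mackey decomposition of the pushforward near $\eta_Z$: if $\rho_i$ denotes the $\Gal(\ol L_i/L_i)$-representation attached to $\cL$ along $W_i$, then $f_*j_!\cL$ corresponds to $\bigoplus_i \mathrm{Ind}_{L_i/K}\rho_i$ and $f_*j_!\Lambda$ to $\bigoplus_i \mathrm{Ind}_{L_i/K}\mathbf 1$.

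Writing $\lambda^K_{\max}$ (resp. $\lambda^{L_i}_{\max}$) for the highest logarithmic ramification slope of Abbes–Saito over $K$ (resp. $L_i$), the three multiplicities at $Z$ become $\lambda^K_{\max}(\bigoplus_i \mathrm{Ind}_{L_i/K}\rho_i)$, $\max_i\lambda^K_{\max}(\mathrm{Ind}_{L_i/K}\mathbf 1)$ and $\sum_i f_i\,\lambda^{L_i}_{\max}(\rho_i)$, using that the log conductor of a direct sum is the maximum of the log conductors and that $f_*$ of the prime divisor $W_i$ is $f_i\cdot Z$. Since the slopes $f_i\lambda^{L_i}_{\max}(\rho_i)$ are nonnegative, the elementary bound $\max_i(a_i+b_i)\le \max_i a_i+\sum_i b_i$ reduces the statement to a \emph{single} finite separable extension $L/K$ of henselian discrete valuation fields, namely the local inequality $\lambda^K_{\max}(\mathrm{Ind}_{L/K}\rho)\le \lambda^K_{\max}(\mathrm{Ind}_{L/K}\mathbf 1)+f_{L/K}\,\lambda^L_{\max}(\rho)$. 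I would dispose first of the directions $Z$ where $f$ is unramified or the ramification is tame, in which case all three multiplicities vanish or the inequality is immediate, so that only the wild part along $E=f^{-1}(D)$ remains.

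This local inequality is the highest-slope shadow of the conductor–discriminant formula $\mathfrak f(\mathrm{Ind}_{L/K}\rho)=\dim(\rho)\cdot \mathfrak f(\mathrm{Ind}_{L/K}\mathbf 1)+f_{L/K}\,\mathfrak f_L(\rho)$: the wild ramification of $\mathrm{Ind}_{L/K}\rho$ has two sources, the ramification of the covering $\Spec L\to \Spec K$ itself — whose top slope is exactly $\lambda^K_{\max}(\mathrm{Ind}_{L/K}\mathbf 1)$ — and the intrinsic ramification of $\rho$, transported from $L$ to $K$. To make this precise I would use the Abbes–Saito logarithmic ramification filtration $G^{r}_{K,\log}$ together with its compatibility with the subgroup $\Gal(\ol K/L)\subset \Gal(\ol K/K)$: a logarithmic Herbrand function $\psi_{L/K}$ relating $G^\bullet_{L,\log}$ and $G^\bullet_{K,\log}$ lets one bound the top slope of $\mathrm{Ind}_{L/K}\rho$ by the maximum of $\lambda^K_{\max}(\mathrm{Ind}_{L/K}\mathbf 1)$ and $\phi_{L/K}(\lambda^L_{\max}(\rho))$, where $\phi_{L/K}$ is the inverse function. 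One then estimates $\phi_{L/K}(\lambda^L_{\max}(\rho))\le \lambda^K_{\max}(\mathrm{Ind}_{L/K}\mathbf 1)+f_{L/K}\,\lambda^L_{\max}(\rho)$ by concavity and the explicit behaviour of the slopes of $\phi_{L/K}$, the crude factor $f_{L/K}$ comfortably absorbing the $e_{L/K}$-discrepancy between the two normalizations.

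The main obstacle is precisely this last input in the imperfect residue field setting. Unlike the classical perfect residue field case, the functoriality of the Abbes–Saito filtration under a finite extension $L/K$ — the existence and properties of $\psi_{L/K}$ and the resulting slope comparison for $\mathrm{Ind}_{L/K}$ — is genuinely delicate. I expect to handle it by reducing, after a suitable base change or alteration putting $L/K$ in non-degenerate position, to situations where the logarithmic different and the slope comparison are controlled, invoking the ramification-theoretic results of \cite{RamImperfect} and the logarithmic conductor computations of \cite{Hu_log_ram}. The robustness of the final bound — the appearance of $f_{L/K}$ rather than the sharper $e_{L/K}^{-1}$ one might expect from a tight analysis — is what makes this crude comparison sufficient, and is exactly what the main theorem \cref{thm_1} requires.
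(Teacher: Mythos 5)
Your global strategy (work prime divisor by prime divisor, henselize at $\eta_Z$, decompose $f_*j_!\cL$ as a sum of (co)inductions, reduce to a one-extension local inequality) is sound as far as it goes, and its first half parallels the paper's \cref{FDILocal}. But there are two genuine gaps. First, you assert that the fibre of $f$ over $\eta_Z$ yields finite \emph{separable} extensions $L_i/K$. The theorem makes no generic étaleness hypothesis: $f$ may be radicial (e.g.\ the normalization of $X$ in a purely inseparable extension of $k(X)$), and even when $k(Y)/k(X)$ is separable the residue extensions $\kappa(W_i)/\kappa(Z)$ can carry inseparability, which is why the paper's local bound (\cref{FDILocal}) involves $f(E_i/D)^{\mathrm{ins}}$ and requires a strict-henselization/spreading-out step to split off the separable residue degree. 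The inseparable part cannot be absorbed into your induction formalism ($G_{L_i}$ is then only isomorphic to the subgroup $G_{K'_i}\subset G_K$ for the separable closure $K'_i$, and the slope comparison across the purely inseparable layer is a theorem, not formal): the paper isolates it via the factorization \cref{generic_factorization} and disposes of it in \cref{finite_direct_radicial} using Hu's comparison $G^{dr}_{K,\log}\subseteq \gamma(G^{r}_{L,\log})$ from \cite{Hu_purely_inseparable}.

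Second, and more seriously, your key local input --- a logarithmic Herbrand function $\psi_{L/K}$ relating $G^{\bullet}_{K,\log}$ and $G^{\bullet}_{L,\log}$ for a finite separable extension of henselian discrete valuation fields with \emph{imperfect} residue field --- is not available: the compatibility of the Abbes--Saito logarithmic filtration with passage to a finite extension is an open problem in general, and neither \cite{RamImperfect} nor \cite{Hu_log_ram} supplies the induction/slope comparison you need; your proposed fix (``base change or alteration putting $L/K$ in non-degenerate position'') is a hope, not an argument, and is precisely where the proof would stall. The paper is structured to avoid this point entirely: it never compares filtrations over the imperfect-residue-field $K$ at $\eta_Z$, but instead uses the semi-continuity and curve-detection theorems of Hu--Leal (\cref{semi_continuity}, \cref{equality_DT}, \cref{Hu_generic_swan_via_curve}) to express the generic log conductor along $D$ as a supremum over test curves $C\to X$. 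On a curve over the perfect field $k$ the residue fields are perfect, the Abbes--Saito log filtration agrees with the classical shifted filtration (\cref{propramfil}), and the one-extension inequality becomes \cref{bound_direct_image_dim_1}, proved by classical Herbrand theory (\cref{equality_ramification_group}, \cref{phi_inequality}, \cref{slope_induction_final}, essentially your $\CoInd$ computation, but carried out where $\psi_{K'/K}$ actually exists). Note also that the correct curve-level inequality is the sharper $\lc_x(f_*j_!\cL)\leq\max\{\lc_x(f_*j_!\Lambda),\lc_y(j_!\cL)\}$; the additive form you target only appears after summing over components. So the architecture you would need is: reduce to curves \emph{first}, then run your local argument --- which is exactly the paper's proof.
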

Note that \cref{thm_2} also holds for the non logarithmic conductor divisor (see \cref{finite_direct_image_lcc_case}). \medskip

\cref{thm_2} is one of the crucial ingredients to prove the Betti estimates from \cite{HuTeyssier_bound_etale}.
%For a variant for Abbes and Saito's non logarithmic condutor, see \cref{Ramification_cycle_finite_direct_image}.
Its proof is local.
In a nutshell, it rests upon Hu's semi-continuity results  \cite{Hu_Leal} for reducing the proof  to a local statement for curves treated in the unpublished note \cite{Teyladic}.
For a variant of \cref{thm_2}  for the Swan conductor obtained by a global argument via the Grothendieck-Ogg-Shafarevich formula, see \cite{Esnault_srinivas_bounding_ram}.

\subsection*{Acknowledgement}
We thank A. Abbes, T. Saito, Y. Taguchi for their interest.
We are grateful to the anonymous referee for a careful reading and for suggesting numerous improvements.
H. H. is supported by the National
Natural Science Foundation of China (Grants No. 12471012) and the Natural Science Foundation of Jiangsu Province (Grant No. BK20231539).

\begin{notation}\label{general_notation}
We introduce the following running notations.
\begin{enumerate}\itemsep=0.2cm
\item[$\bullet$] $k$  denotes a perfect field of characteristic $p>0$.

\item[$\bullet$] The letter $\Lambda$ will refer to a finite local ring of residue characteristic $\ell \neq p$.

\item[$\bullet$] For a scheme $X$ of finite type over $k$, we denote by $D_{ctf}^b(X,\Lambda)$ the derived category of complexes of $\Lambda$-sheaves of finite tor-dimension with bounded and constructible cohomology sheaves.

\item[$\bullet$]
We let $\Cons_{ctf}(X,\Lambda)$ be the category of constructible sheaves of $\Lambda$-modules of finite tor-dimension over $X$ and
$\LC_{tf}(X,\Lambda)\subset \Cons_{tf}(X,\Lambda)$ the full subcategory spanned by  locally constant constructible sheaves.
By \cite[Lemma 4.4.14]{Weibel_intro}, the germs of any $\cL\in \LC_{tf}(X,\Lambda)$ are automatically free  $\Lambda$-modules of finite rank.
 
\item[$\bullet$]
For a finite stratification $\Sigma$ of $X$, we let $D_{\Sigma,tf}^b(X,\Lambda)\subset D_{tf}^b(X,\Lambda)$ be the full subcategory spanned by $\Sigma$-constructible complexes, that is complexes $\cK\in D_{tf}^b(X,\Lambda)$ such that for every $Z\in \Sigma$, the cohomology sheaves of $\cK|_Z$ lie in $\LC_{tf}(Z,\Lambda)$.

\end{enumerate}

\end{notation}
\section{Conductor and finite direct image: local case}

\begin{notation}\label{local_fields_notation}

Let $K$ be a henselian discrete valuation field, $\sO_K$ the ring of integer of $K$, $\fm_K$ the maximal ideal of $\sO_K$, $F$ the residue field of $\sO_K$, $\overline{K}$ an algebraic closure of $K$ and $K^{\sep}\subset \overline{K}$ the separable closure of $K$ in $\overline{K}$.
Let $G_K$ be the Galois group of $K^{\sep}$ over $K$.
We denote by  $I_K\subset G_K$ the inertia subgroup and by  $P_K\subset I_K$ the wild ramification subgroup.
\end{notation}

\begin{recollection}\label{local_fields}

 In \cite{RamImperfect},  Abbes and Saito defined two decreasing filtrations $\{G^r_K\}_{r\in\bQ_{>0}}$ and $\{G^r_{K,\log}\}_{r\in\bQ_{\geq 0}}$ on $G_K$ by closed normal subgroups.   
They are called respectively the {\it ramification filtration} and the {\it logarithmic ramification filtration}.  
For $r\in\bQ_{\geq 0}$, put
\begin{equation*}
G^{r+}_K=\overline{\bigcup_{s>r}G_K^s}\ \ \ \textrm{and}\ \ \ G^{r+}_{K,\log}=\overline{\bigcup_{s>r}G_{K,\log}^s}.
\end{equation*}

\end{recollection}

\begin{prop}[{\cite{RamImperfect,asii,logcc,wr}}]\label{propramfil}
The following inclusions hold:
\begin{enumerate}
\item
For any $0<r\leq 1$, we have 
$$
G^r_K=G^0_{K,\log}=I_K \text{ and } G^{1+}_K=G^{0+}_{K,\log}=P_K.
$$
\item
For any $r\in \bQ_{\geq 0}$, we have 
$$
G^{r+1}_K\subseteq G^r_{K,\log}\subseteq G^r_K.
$$
If $F$ is perfect, then for any $r\in \bQ_{\geq 0}$, we have 
$$
G^r_{K,\cl}=G^r_{K,\log}=G^{r+1}_K.
$$ 
where $G^r_{K,\cl}$ is the classical wild ramification subgroup as defined in \cite{CL}.
\end{enumerate}
\end{prop}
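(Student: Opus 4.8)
The plan is to assemble the four assertions from the foundational papers of Abbes and Saito and their sequels, since each is a structural property of the two filtrations recalled in \cref{local_fields}; the work of the proof is therefore to match the normalizations used in the different sources and to isolate which one supplies each inclusion. Throughout I would argue from the defining geometric description of $G^r_K$ through the sets of geometric connected components of the tubular neighborhoods attached to finite separable extensions $L/K$, together with the analogous logarithmic tubes defining $G^r_{K,\log}$; the Galois group $G_K$ acts on the $K$-embeddings $L\hookrightarrow K^{\sep}$, and $G^r_K$ (resp. $G^r_{K,\log}$) is cut out by requiring this action to be trivial on the relevant $\pi_0$.

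For part (1) the content is the location of the tame-wild thresholds, which is the normalization established in \cite{RamImperfect}. I would first recall that $G^r_K\subseteq I_K$ for every $r>0$ and that $G^r_K$ splits a finite separable $L/K$ exactly when the ramification of $L/K$ is measured below level $r$. The tame-wild transition of the non-logarithmic filtration sits at $r=1$: for $0<r\leq 1$ the subgroup $G^r_K$ splits $L/K$ precisely when $L/K$ is unramified, forcing $G^r_K=I_K$ on this whole range, while $G^{1+}_K$ splits $L/K$ precisely when $L/K$ is tame, forcing $G^{1+}_K=P_K$. The logarithmic tubes are built so that the same transition is shifted to $r=0$, which gives $G^0_{K,\log}=I_K$ and $G^{0+}_{K,\log}=P_K$.

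For part (2) the chain $G^{r+1}_K\subseteq G^r_{K,\log}\subseteq G^r_K$ is the comparison between the two filtrations, and I would obtain it by relating the logarithmic and non-logarithmic tubular neighborhoods: incorporating the log structure twists the defining radius by the uniformizer, so the geometric components at logarithmic level $r$ interpolate between those at non-logarithmic levels $r$ and $r+1$, and passing to $\pi_0$ yields the two inclusions. This comparison is carried out in \cite{asii,logcc}. For the last assertion I would invoke the case $F$ perfect, where the rigid-geometric definitions collapse to the classical arithmetic ones: the main comparison theorem then identifies both $G^r_{K,\log}$ and $G^{r+1}_K$ with Serre's upper-numbering subgroup $G^r_{K,\cl}$ of \cite{CL}, so that the first inclusion of the displayed chain becomes an equality.

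The main obstacle is precisely this perfect-residue-field identification $G^r_{K,\cl}=G^r_{K,\log}=G^{r+1}_K$: in contrast with the formal normalizations of part (1), it requires genuinely matching a filtration defined through rigid geometry with one defined through the valuations of conjugate uniformizers, and it is the deepest input, supplied in the non-logarithmic case already by \cite{RamImperfect} and in the present form by \cite{wr}. A secondary, purely bookkeeping difficulty is to keep the index shift between the logarithmic and non-logarithmic breaks consistent, since the cited references do not all adopt the same convention for indexing the filtrations.
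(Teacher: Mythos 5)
Your proposal is correct and matches the paper exactly in approach: the paper gives no proof of this proposition, treating it as a recollection assembled from \cite{RamImperfect,asii,logcc,wr}, which is precisely the assembly you carry out, with the right attribution of part (1) and the tame--wild thresholds to \cite{RamImperfect}, the comparison chain $G^{r+1}_K\subseteq G^r_{K,\log}\subseteq G^r_K$ to the later Abbes--Saito work, and the perfect-residue-field identification with Serre's classical filtration (including the index shift by $1$) as the deepest input. Your normalizations and the heuristic for the logarithmic twist are consistent with the cited sources, so there is nothing to correct.
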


Let $M$ be a finitely generated $\Lambda$-module with continuous $P_K$-action. 
 The module $M$ has decompositions 
 \begin{equation}\label{twodecomp}
M=\bigoplus_{r\geq 1}M^{(r)}\ \ \ \textrm{and}\ \ \ M=\bigoplus_{r\geq 0}M_{\log}^{(r)}
\end{equation}
into $P_K$-stable submodules, where $M^{(1)}=M^{(0)}_{\log}=M^{P_K}$, and such that for every $r\in \bQ_{>0}$,
\begin{align*}
(M^{(r+1)})^{G^{r+1}_K}=0\ \ \ \textrm{and}\ \ \ (M^{(r+1)})^{G^{(r+1)+}_K}=M^{(r+1)};\\
(M^{(r)}_{\log})^{G^{r}_{K,\log}}=0\ \ \ \textrm{and}\ \ \ (M^{(r)}_{\log})^{G^{r+}_{K,\log}}=M^{r}_{\log}.
\end{align*}
The decompositions \eqref{twodecomp} are called respectively the {\it slope decomposition} and the {\it logarithmic slope decomposition} of $M$. 
The values $r$ for which $M^{(r)}\neq 0$ (resp. $M^{(r)}_{\log}\neq 0$) are the {\it slopes} (resp. the {\it logarithmic slopes}) of $M$.\medskip

We will make repeated use of the following facts :

\begin{lem}[{\cite[Proposition 1.1,Lemma 1.5]{Katz_Gauss_sum}}]\label{slope_exact}
For every $r\in \bQ_{\geq 0}$,  the endofunctors of the category of finitely generated $\Lambda$-modules with continuous $P_K$-action defined by 
$$
M\to M^{(r)}     \quad \text{ and }  \quad  M\to M_{\log}^{(r)}
$$
are exact.
If furthermore $M$ is free of finite type over $\Lambda$, so is $M_{\log}^{(r)}$.
\end{lem}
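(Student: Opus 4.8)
The plan is to reduce to the ordinary representation theory of a finite $p$-group and to realise each slope projector as multiplication by a \emph{central idempotent} that depends only on the ramification filtration and not on $M$. The basic observation is that $p$ is invertible in $\Lambda$: since $\Lambda$ is a finite local ring whose residue field has characteristic $\ell\neq p$, the element $p$ does not lie in the maximal ideal and is therefore a unit. Moreover $\Lambda$ is finite, so $M$ is a finite set and $\Image(\GL_\Lambda(M))\subset \mathrm{Aut}_{\mathrm{set}}(M)$ is a finite group; by continuity the $P_K$-action factors through a finite quotient $P:=P_K/H_0$, which is a finite $p$-group because $P_K$ is pro-$p$. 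In particular $|P|$ is invertible in $\Lambda$, and the whole problem takes place inside the category of $\Lambda[P]$-modules.

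First I would record the averaging construction. For any normal subgroup $\bar H\trianglelefteq P$ the element $e_{\bar H}:=|\bar H|^{-1}\sum_{g\in\bar H}g$ is a central idempotent of $\Lambda[P]$ (centrality because $\bar H$ is normal), and for every $\Lambda[P]$-module $N$ one has $e_{\bar H}N=N^{\bar H}$, exhibiting invariants as a functorial $\Lambda[P]$-direct summand. Applying this to the images in $P$ of the wild filtration subgroups $G^{r+}_K$ (respectively $G^{r+}_{K,\log}$), which are normal, produces a family of central idempotents $e_{r+}$ (respectively their logarithmic analogues $f_{r+}$).

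The key step is to match the partial sums of the slope decomposition with these invariants. Using the defining relations $(M^{(s)})^{G^{s+}_K}=M^{(s)}$ and $(M^{(s)})^{G^s_K}=0$ together with the monotonicity of the filtration, a direct check gives $M^{G^{r+}_K}=\bigoplus_{s\leq r}M^{(s)}$ for every $r$, and the analogous identity holds logarithmically; here $G^{1+}_K=G^{0+}_{K,\log}=P_K$ by \cref{propramfil}(1) recovers $M^{P_K}=M^{(1)}=M^{(0)}_{\log}$. Writing $r_1<\dots<r_n$ for the finitely many slopes and noting that nested invariants force $e_{r_{i-1}+}e_{r_i+}=e_{r_{i-1}+}$, the differences $e_{r_i}:=e_{r_i+}-e_{r_{i-1}+}$ (with $e_{r_0+}:=0$) are orthogonal central idempotents with $e_{r_i}M=M^{(r_i)}$, and they depend only on $P$ and on its ramification filtration. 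I stress that only the groups $G^{r+}_K$ enter, so no subtlety about the tame part $I_K=G^1_K$ intervenes.

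With this in hand the conclusions are formal. Given a short exact sequence $0\to M'\to M\to M''\to 0$ of finitely generated $\Lambda$-modules with continuous $P_K$-action, I choose $H_0$ so that all three actions factor through one common finite quotient $P$; then $M\mapsto M^{(r)}$ is, on these modules, the restriction of multiplication by the fixed central idempotent $e_r\in\Lambda[P]$, and multiplication by an idempotent is exact, whence the exactness of $M\mapsto M^{(r)}$ and, verbatim, of $M\mapsto M^{(r)}_{\log}$. For freeness, if $M$ is free of finite type over $\Lambda$ then $M^{(r)}_{\log}=f_r M$ is a $\Lambda$-module direct summand of $M$, hence finitely generated projective; since $\Lambda$ is local it is free of finite type, and the same argument applies to $M^{(r)}$. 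The only real content, and the step I would expect to be delicate, is the identification $M^{G^{r+}_K}=\bigoplus_{s\leq r}M^{(s)}$ together with the verification that a single idempotent $e_r$, intrinsic to $(P,\text{filtration})$, simultaneously computes the slope piece of $M$, of $M'$ and of $M''$; once the projectors are known to be central idempotents independent of $M$, exactness and freeness follow immediately.
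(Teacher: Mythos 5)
The paper does not actually prove \cref{slope_exact}: it is quoted from Katz's \emph{Gauss sums, Kloosterman sums and monodromy groups} (Proposition 1.1 and Lemma 1.5), and the argument given there is precisely the averaging-idempotent argument you reconstruct, so your route matches the intended one. Your proof is correct in substance: $p$ is a unit in $\Lambda$, the continuous action factors through a finite $p$-group quotient $P$ of the pro-$p$ group $P_K$, invariants under a normal subgroup $\overline H\trianglelefteq P$ are cut out by the central idempotent $e_{\overline H}$, and the partial-sum identity $M^{G^{r+}_{K,\log}}=\bigoplus_{s\leq r}M^{(s)}_{\log}$ does follow from the defining properties of \eqref{twodecomp} together with the monotonicity of the filtration (for $s\leq r$ one has $G^{r+}_{K,\log}\subseteq G^{s+}_{K,\log}$, and for $s>r$ one has $G^{s}_{K,\log}\subseteq G^{r+}_{K,\log}$, so $(M^{(s)}_{\log})^{G^{r+}_{K,\log}}=0$). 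One small repair is needed where you pass to a short exact sequence: your idempotents $e_{r_i}:=e_{r_i+}-e_{r_{i-1}+}$ are indexed by the \emph{consecutive slopes of $M$}, and applied to $M'$ or $M''$ such a difference computes $\bigoplus_{r_{i-1}<s\leq r_i}(\,\cdot\,)^{(s)}_{\log}$ rather than a single slope piece, so as written it is not the module-independent projector your final paragraph calls for. The fix is immediate: for a fixed $r>0$ set $e_r:=e_{\overline H_{r+}}-e_{\overline H_{r}}$, the difference of the averaging idempotents attached to the images in $P$ of $G^{r+}_{K,\log}$ and of $G^{r}_{K,\log}$ (both lie in $P_K$ since $G^{0+}_{K,\log}=P_K$; for the bottom cases $r=0$, resp.\ $r=1$ non-logarithmically, take $e_P$ itself, using $G^{0+}_{K,\log}=G^{1+}_K=P_K$). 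Since the same computation gives $M^{G^{r}_{K,\log}}=\bigoplus_{s<r}M^{(s)}_{\log}$, this single central element of $\Lambda[P]$ computes the slope-$r$ part of \emph{every} module whose action factors through $P$; exactness then follows because multiplication by a central idempotent is a natural direct-summand functor, and the freeness statement follows, as you say, because a direct summand of a finite free module over the local ring $\Lambda$ is finite projective, hence free.
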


\begin{lem}[{\cite[Lemma 1.5]{Katz_Gauss_sum}}]\label{slope_tensor}
Let $\Lambda\to \Lambda'$ be a morphism between finite local rings with residue field of characteristic $\ell \neq p$.
For every $r\in \bQ_{\geq 0}$ and every finitely generated free $\Lambda$-module $M$  with continuous $P_K$-action,  $r$ is a slope (resp.  logarithmic slope) of $M$ if and only if $r$ is a slope (resp.  logarithmic slope) of $\Lambda'\otimes_{\Lambda} M$.
\end{lem}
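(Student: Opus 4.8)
The plan is to prove the sharper statement that base change along $\Lambda\to\Lambda'$ is compatible with the slope decomposition, namely that $\bigoplus_r \Lambda'\otimes_\Lambda M^{(r)}$ is the slope decomposition of $N:=\Lambda'\otimes_\Lambda M$ (and likewise with $M^{(r)}_{\log}$), from which the equality of slope sets follows by reading off the nonzero pieces. I would set $N_r:=\Lambda'\otimes_\Lambda M^{(r)}$; since $\Lambda'\otimes_\Lambda(-)$ is additive and the $P_K$-action on $N$ is induced from that on $M$, one gets $N=\bigoplus_r N_r$ as $\Lambda'$-modules with continuous $P_K$-action, each $N_r$ being $P_K$-stable. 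By uniqueness of the slope decomposition it then suffices to verify its defining properties for the $N_r$: that $N_1=N^{P_K}$ and that, for $s=r+1>1$, one has $(N_s)^{G^s_K}=0$ and $(N_s)^{G^{s+}_K}=N_s$.

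The last identity is immediate: since $(M^{(s)})^{G^{s+}_K}=M^{(s)}$, the group $G^{s+}_K$ acts trivially on $M^{(s)}$, hence on $N_s$. The other two are instances of the compatibility of invariants with base change, and this is the heart of the argument. The key observation is that, $M$ being finite and the action continuous, it factors through a finite quotient $Q$ of the pro-$p$ group $P_K$; thus $Q$ is a finite $p$-group, and the actions of $P_K$ and of each $G^s_K$ (which for $s>1$ lies in $P_K=G^{1+}_K$) on $M$ and on $N$ factor through $Q$ and subgroups thereof. Because $\Lambda$ has residue characteristic $\ell\neq p$, the prime $p$, and hence the order of any $p$-subgroup $H\leq Q$, is invertible in $\Lambda$. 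I would therefore form the averaging idempotent $e_H:=\tfrac{1}{|H|}\sum_{h\in H}h\in\mathrm{End}_\Lambda(-)$, whose image is the module of $H$-invariants; as this idempotent is visibly carried to the analogous one by $\Lambda'\otimes_\Lambda(-)$, for any finite $\Lambda$-module $P$ with $Q$-action one obtains
\[
(\Lambda'\otimes_\Lambda P)^H=\Lambda'\otimes_\Lambda(P^H).
\]
Taking $H$ the image of $P_K$ yields $N^{P_K}=\Lambda'\otimes_\Lambda M^{P_K}=N_1$, and taking $P=M^{(s)}$ with $H$ the image of $G^s_K$ yields $(N_s)^{G^s_K}=\Lambda'\otimes_\Lambda (M^{(s)})^{G^s_K}=0$. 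This identifies $\bigoplus_r N_r$ as the slope decomposition of $N$; the logarithmic case is verbatim the same with $G^r_{K,\log}$ and $M^{(r)}_{\log}$ throughout.

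Finally I would check that the set of slopes is genuinely preserved, i.e. that $N_r\neq 0$ if and only if $M^{(r)}\neq 0$. Only the forward direction needs an argument. Here I would use that a ring homomorphism between finite local rings is automatically local, since it carries the nilpotent maximal ideal into the nilradical, so it induces a field extension $\Lambda/\fm_\Lambda\hookrightarrow\Lambda'/\fm_{\Lambda'}$. If $M^{(r)}\neq 0$, then $M^{(r)}/\fm_\Lambda M^{(r)}\neq 0$ by Nakayama, and tensoring this $\Lambda/\fm_\Lambda$-vector space up along the residue field extension keeps it nonzero; since the result is a quotient of $\Lambda'\otimes_\Lambda M^{(r)}$, we get $N_r\neq 0$. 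Combining, the slopes (resp. logarithmic slopes) of $M$ and of $\Lambda'\otimes_\Lambda M$ coincide.

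The main obstacle is conceptual rather than computational: taking invariants does not commute with an arbitrary (possibly non-flat) base change, so the entire argument hinges on exploiting that the relevant ramification groups are pro-$p$ while $\ell\neq p$. This makes their orders invertible in $\Lambda$ and replaces invariants by the image of an averaging idempotent, which manifestly survives base change; everything else is a formal consequence of the uniqueness of the slope decomposition together with a Nakayama non-vanishing check.
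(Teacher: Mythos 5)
Your proof is correct and is essentially the standard argument behind the result the paper cites from Katz: since $\ell\neq p$, invariants under the relevant pro-$p$ ramification groups are the images of averaging idempotents, so the slope and logarithmic slope decompositions manifestly commute with $\Lambda'\otimes_\Lambda(-)$, and uniqueness of the decomposition plus a nonvanishing check finishes the proof. Your Nakayama-plus-automatic-locality nonvanishing step is a minor variant that even makes the freeness hypothesis on $M$ superfluous (the freeness route would instead note that a finitely generated direct summand of a free module over the local ring $\Lambda$ is free, hence has nonzero base change), but the overall approach is the same.
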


\begin{lem}[{\cite[Lemma 1.8]{Katz_Gauss_sum}}]\label{GKstable}
In the setting of \cref{local_fields}, assume that $M$ is a finitely generated $\Lambda$-module with continuous $G_K$-action. 
Then, each factor $M^{(r)}$ and $M_{\log}^{(r)}$ from \eqref{twodecomp} is stable under $G_K$.
\end{lem}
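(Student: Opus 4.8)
The plan is to exploit the fact that the two slope decompositions \eqref{twodecomp} are intrinsic to the $P_K$-module structure of $M$ together with the ramification subgroups $G^s_K$ (resp.\ $G^s_{K,\log}$), and that all of these subgroups are normal in $G_K$ by \cref{local_fields}. Fix $g\in G_K$ and a slope $r$. I would start from the tautological identity $h(gm)=g\bigl((g^{-1}hg)m\bigr)$, valid for all $h\in P_K$ and $m\in M$ because $M$ carries a genuine $G_K$-action, together with the observation that $h\mapsto g^{-1}hg$ is an automorphism of each of $P_K$, $G^s_K$, $G^{s+}_K$ (and their logarithmic analogues) by normality. The strategy is then to show that the translated summands $g\cdot M^{(r)}$ again satisfy the axioms characterizing the slope decomposition, and to conclude by uniqueness that $g\cdot M^{(r)}=M^{(r)}$, which is exactly the asserted $G_K$-stability.

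Carrying this out, $g\cdot M^{(r)}$ is $P_K$-stable by the displayed identity and the $P_K$-stability of $M^{(r)}$, and $M=\bigoplus_r g\cdot M^{(r)}$ since $g$ acts bijectively. The fixed-point conditions transport cleanly: for $s>1$ one has $(g\cdot M^{(s)})^{G^s_K}=g\cdot(M^{(s)})^{g^{-1}G^s_Kg}=g\cdot(M^{(s)})^{G^s_K}=0$ and likewise $(g\cdot M^{(s)})^{G^{s+}_K}=g\cdot M^{(s)}$, using $g^{-1}G^s_Kg=G^s_K$ and $g^{-1}G^{s+}_Kg=G^{s+}_K$; the bottom piece $g\cdot M^{(1)}=g\cdot M^{P_K}=M^{P_K}$ is handled the same way. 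Thus $\{g\cdot M^{(r)}\}_r$ is a slope decomposition with $g\cdot M^{(r)}$ in slope $r$. The identical computation, replacing $G^s_K$ by $G^s_{K,\log}$ and $M^{P_K}=M^{(1)}$ by $M^{P_K}=M^{(0)}_{\log}$, treats the logarithmic case.

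The step I expect to be the real content, rather than bookkeeping, is the uniqueness of the slope decomposition needed to pass from ``$\{g\cdot M^{(r)}\}_r$ is a slope decomposition'' to $g\cdot M^{(r)}=M^{(r)}$. I would establish it by realizing each summand canonically. The partial sums satisfy $\bigoplus_{r\le s}M^{(r)}=M^{G^{s+}_K}$, which is $G_K$-stable by normality of $G^{s+}_K$, and on this module $G^s_K$ acts through a finite $p$-group quotient $Q$ (finite because $M$ is a finite set, $\Lambda$ being a finite ring, and the action continuous). Since the residue characteristic $\ell$ of $\Lambda$ differs from $p$, the order of $Q$ is invertible in $\Lambda$, so the averaging idempotent $e_s=|Q|^{-1}\sum_{q\in Q}q$ is defined on $M^{G^{s+}_K}$; its image is $(M^{G^{s+}_K})^{G^s_K}=\bigoplus_{r<s}M^{(r)}$ and one checks $\ker e_s=M^{(s)}$. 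As $g$-conjugation permutes $Q$, it fixes $e_s$, so $e_s$ commutes with the $G_K$-action and $M^{(s)}=\ker e_s$ is $G_K$-stable. This realization simultaneously yields the uniqueness invoked above and gives a self-contained route to the lemma, the only genuine inputs being the normality of the ramification subgroups and the invertibility of $p$ in $\Lambda$.
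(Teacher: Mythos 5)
Your proof is correct. Note first that the paper offers no argument of its own here: its ``proof'' is the citation to Katz's Lemma 1.8, and your first two paragraphs reproduce exactly the transport-of-structure argument behind that citation (normality of $P_K$, $G^s_K$, $G^{s+}_K$ and their logarithmic analogues in $G_K$, plus uniqueness of the slope decomposition). What you add is a self-contained proof of the uniqueness, which is indeed the only load-bearing step, via the canonical realizations $M^{G^{s+}_K}=\bigoplus_{r\le s}M^{(r)}$ and $M^{(s)}=\ker e_s$; this checks out, since the inclusion $\bigoplus_{r\le s}M^{(r)}\subseteq M^{G^{s+}_K}$ uses that the filtration is decreasing, the reverse inclusion uses that each summand is $G^{s+}_K$-stable because $G^{s+}_K\subseteq G^{1+}_K=P_K$ (resp.\ $G^{s+}_{K,\log}\subseteq G^{0+}_{K,\log}=P_K$) by \cref{propramfil}, and $(M^{(r)})^{G^{s+}_K}=0$ for $r>s$ because $G^{r}_K\subseteq G^{s+}_K$. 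Two points you should make explicit rather than assert: the image $Q$ of $G^s_K$ in $\mathrm{Aut}_\Lambda(M^{G^{s+}_K})$ is a $p$-group precisely because $G^s_K\subseteq P_K$ for $s>1$ (resp.\ $s>0$ logarithmically) and $P_K$ is pro-$p$, which is why the bottom slope $M^{P_K}$ must be, and in your write-up is, treated separately via normality of $P_K$; and $|Q|$ is invertible in $\Lambda$ because a finite local ring with residue characteristic $\ell\neq p$ has order a power of $\ell$. Finally, observe that your idempotent argument makes the first two paragraphs redundant: once $e_s$ is $G_K$-equivariant, $M^{(s)}=\ker e_s$ is $G_K$-stable outright, with no need to transport the decomposition and invoke uniqueness at all.
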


\begin{definition}\label{notation_logconductor}
Let $M$ be a finitely generated $\Lambda$-module with continuous $P_K$-action. 
We denote by $c_K(M)$ the largest slope of $M$ and refer to $c_K(M)$ as the \textit{conductor of $M$}.
Similarly, we denote by $\lc_K(M)$ the largest logarithmic slope of $M$ and refer to $\lc_K(M)$ as the \textit{logarithmic conductor of $M$}.
We say that $M$ is {\it isoclinic} (resp. {\it logarithmic isoclinic}) if $M$ has only one slope (resp. only one logarithmic slope). 
\end{definition}
The following is an immediate consequence of  \cref{propramfil}-(2).

\begin{lem}\label{inequalityLogNonLog}
We have 
$$
\lc_{K}(M)  \leq c_K(M)\leq  \lc_{K}(M)+1 \ . 
$$
If the residue field $F$ is perfect, we have
$$
\lc_{K}(M)+1 =c_K(M) \ .  
$$
\end{lem}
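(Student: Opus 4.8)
The plan is to translate the statement about the largest (logarithmic) slope into a statement about \emph{when} the subgroups $G^{r+}_K$ and $G^{r+}_{K,\log}$ act trivially on $M$, and then to feed in the comparison of filtrations from \cref{propramfil}-(2). First I would record the following reformulation of the two conductors. Using the slope decomposition $M=\bigoplus_{r\geq 1}M^{(r)}$ together with the defining properties $(M^{(s)})^{G^s_K}=0$ and $(M^{(s)})^{G^{s+}_K}=M^{(s)}$, and the elementary facts that $G^{c+}_K\subseteq G^{s+}_K$ for $c\geq s$ while $G^{c+}_K\supseteq G^s_K$ for $c<s$, one checks that
\[
M^{G^{c+}_K}=\bigoplus_{s\leq c}M^{(s)} .
\]
Since $M$ has only finitely many slopes, this yields the characterization $c_K(M)=\min\{\,c : M^{G^{c+}_K}=M\,\}$; that is, $c_K(M)$ is the least $c$ for which $G^{c+}_K$ acts trivially on $M$. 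The identical computation with the logarithmic slope decomposition gives $\lc_K(M)=\min\{\,\lambda : M^{G^{\lambda+}_{K,\log}}=M\,\}$.

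With this in hand, both inequalities follow by comparing the relevant subgroups. From $G^r_{K,\log}\subseteq G^r_K$ and the definition of the $(-)+$ subgroups as closures of unions over larger indices, I get $G^{c+}_{K,\log}\subseteq G^{c+}_K$ for every $c$; taking $c=c_K(M)$, triviality of $G^{c+}_K$ on $M$ forces triviality of $G^{c+}_{K,\log}$, whence $\lc_K(M)\leq c_K(M)$. Similarly, from $G^{r+1}_K\subseteq G^r_{K,\log}$ and the index shift $t=r+1$ I obtain $G^{(\lambda+1)+}_K\subseteq G^{\lambda+}_{K,\log}$ for every $\lambda$; taking $\lambda=\lc_K(M)$ shows that $G^{(\lambda+1)+}_K$ acts trivially on $M$, i.e. $c_K(M)\leq \lc_K(M)+1$.

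For the perfect residue field case I would instead use the equality $G^r_{K,\log}=G^{r+1}_K$ of \cref{propramfil}-(2). Passing to closures of unions over indices $>\lambda$ upgrades this to $G^{\lambda+}_{K,\log}=G^{(\lambda+1)+}_K$, so that $M^{G^{\lambda+}_{K,\log}}=M^{G^{(\lambda+1)+}_K}$ for all $\lambda$. By the reformulation above this means $\lc_K(M)\leq\lambda$ if and only if $c_K(M)\leq\lambda+1$; taking $\lambda=c_K(M)-1$ gives $\lc_K(M)\leq c_K(M)-1$, and combined with $c_K(M)\leq\lc_K(M)+1$ this forces $c_K(M)=\lc_K(M)+1$.

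The computations are genuinely routine, in line with the lemma being an immediate consequence of \cref{propramfil}, so there is no serious obstacle. The only points requiring care are the bookkeeping of the $(-)+$ operation — namely that forming $\overline{\bigcup_{s>r}(-)}$ is monotone and commutes with the shift $r\mapsto r+1$ — and the verification that the largest slope is exactly the threshold at which $G^{r+}_K$ begins to act trivially, which relies on the finiteness of the slope set and on the containment $G^{r+}_K\subseteq P_K$ (so that invariants respect the slope decomposition).
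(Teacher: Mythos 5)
Your proof is correct and follows exactly the route the paper intends: the paper states the lemma as an immediate consequence of \cref{propramfil}-(2), and your argument simply fills in that deduction, via the (correct) characterization of $c_K(M)$ and $\lc_K(M)$ as the thresholds at which $G^{r+}_K$ and $G^{r+}_{K,\log}$ act trivially on $M$. The bookkeeping points you flag (monotonicity of $\overline{\bigcup_{s>r}(-)}$, the shift $r\mapsto r+1$, and $G^{r+}_K\subseteq P_K$ so that invariants respect the slope decomposition) are precisely the right ones, and they all check out.
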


\begin{lem}\label{slopes_from_coho_to_complex}
Let $M^{\bullet}$ be a  complex of finitely generated $\Lambda$-modules with continuous $G_K$-action and let $i\in \bZ$.
If $r\in \mathds{Q}_{\geq 0}$ is a logarithmic slope of $\cH^i M^{\bullet}$, then $r$  is a logarithmic slope of $M^i$.
\end{lem}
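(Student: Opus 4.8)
The plan is to reduce the statement to the exactness of the logarithmic slope functor recorded in \cref{slope_exact}. Recall that for a finitely generated $\Lambda$-module $N$ with continuous $P_K$-action, a rational number $r\geq 0$ is a logarithmic slope of $N$ precisely when $N_{\log}^{(r)}\neq 0$. So the whole content of the lemma is the implication $(\cH^i M^{\bullet})_{\log}^{(r)}\neq 0 \Rightarrow (M^i)_{\log}^{(r)}\neq 0$, and I would prove this by showing that $(-)_{\log}^{(r)}$ commutes with the formation of cohomology.

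First I would note that the differentials of $M^{\bullet}$ are morphisms of $\Lambda$-modules compatible with the $G_K$-action, hence a fortiori with the $P_K$-action. Thus, applying the endofunctor $N\mapsto N_{\log}^{(r)}$ termwise yields a genuine complex $(M^{\bullet})_{\log}^{(r)}$, whose $j$-th term is $(M^j)_{\log}^{(r)}$ and whose differentials are induced by those of $M^{\bullet}$. Since by \cref{slope_exact} the functor $(-)_{\log}^{(r)}$ is exact, it commutes with kernels, images and quotients, and therefore with the formation of cohomology of complexes. This produces a canonical isomorphism
$$
(\cH^i M^{\bullet})_{\log}^{(r)} \;\simeq\; \cH^i\!\big((M^{\bullet})_{\log}^{(r)}\big) \ .
$$

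Now I would conclude in one line. By hypothesis $r$ is a logarithmic slope of $\cH^i M^{\bullet}$, so the left-hand side is nonzero, hence so is $\cH^i\big((M^{\bullet})_{\log}^{(r)}\big)$. As the $i$-th cohomology of a complex is a subquotient of its $i$-th term, the module $(M^i)_{\log}^{(r)}$ is nonzero, which is exactly the assertion that $r$ is a logarithmic slope of $M^i$. There is no genuine obstacle here: the only point requiring care is the elementary homological fact that an exact functor commutes with cohomology, which is immediate once one observes that $(-)_{\log}^{(r)}$ preserves the relevant kernels and images by \cref{slope_exact}.
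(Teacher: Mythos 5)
Your proof is correct and takes essentially the same approach as the paper: both arguments rest entirely on the exactness of $(-)^{(r)}_{\log}$ from \cref{slope_exact}. The paper just phrases the homological bookkeeping concretely, applying exactness to the inclusion $Z^i=\Ker(M^i\to M^{i+1})\subset M^i$ and to the surjection $Z^i\twoheadrightarrow \cH^i M^{\bullet}$, whereas you package the same facts as ``an exact functor commutes with cohomology'' together with the observation that $\cH^i$ is a subquotient of $M^i$.
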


\begin{proof}
Let $Z^i := \Ker(M^i\to M^{i+1})\subset M^i$.
By \cref{slope_exact}, it is enough to show that  $r$  is a logarithmic slope of $Z^i$.
By \cref{slope_exact} again , the surjective map $Z^i\to  \cH^i M^{\bullet}$ induces a surjective map 
$$
(Z^i)^{(r)}_{\log} \to (\cH^i M^{\bullet})^{(r)}_{\log} \ .
$$
Since the right hand side is non zero, so is the left hand side and the conclusion follows.
\end{proof}

\begin{lem}\label{good_presentation_complex}
Let $M^{\bullet}$ be a bounded finite tor-dimension complex of  
finitely generated  $\Lambda$-modules with continuous $G_K$-action.
Then, $M^{\bullet}$ is quasi-isomorphic to a bounded complex $N^{\bullet}$ of free $\Lambda$-modules of finite type with continuous $G_K$-action such that
$r\in \mathds{Q}_{\geq 0}$ is a logarithmic slope of some $N^i$ if and only if it is a logarithmic slope of some $\cH^i N^{\bullet}$.
\end{lem}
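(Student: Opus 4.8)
The plan is to first replace $M^{\bullet}$ by a bounded complex of finite free $\Lambda$-modules carrying a continuous $G_K$-action, and then to delete the ``acyclic logarithmic slope blocks'' using the exactness of the logarithmic slope decomposition.

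For the first step, observe that since each $M^i$ is a finite $\Lambda$-module on which $G_K$ acts continuously and $M^{\bullet}$ is bounded, the $G_K$-action on the whole complex factors through a single finite quotient $Q$ of $G_K$. Thus $M^{\bullet}$ is a bounded complex of finitely generated modules over the finite (hence Noetherian) ring $\Lambda[Q]$, which is free as a $\Lambda$-module. Choosing a bounded-above resolution $P^{\bullet}\to M^{\bullet}$ by finite free $\Lambda[Q]$-modules, each $P^i$ is finite free over $\Lambda$ and still carries a continuous $G_K$-action. Because $M^{\bullet}$ has finite tor-dimension over $\Lambda$, a suitable brutal truncation of $P^{\bullet}$ yields a bounded complex $\tilde N^{\bullet}$ quasi-isomorphic to $M^{\bullet}$; the newly created boundary term is a finitely generated $\Lambda[Q]$-module that is $\Lambda$-flat, and hence, being finite over the Artinian local ring $\Lambda$, is $\Lambda$-free. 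I expect this step to be the main obstacle: one must simultaneously maintain $G_K$-equivariance, $\Lambda$-freeness of every term, and boundedness, the last two being reconciled precisely by the finite tor-dimension hypothesis.

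Next I would decompose $\tilde N^{\bullet}$ into logarithmic slope blocks. By \cref{slope_exact} the functors $N\mapsto N^{(r)}_{\log}$ are exact and preserve finite freeness over $\Lambda$, and by \cref{GKstable} each summand $N^{(r)}_{\log}$ is $G_K$-stable. Consequently $\tilde N^{\bullet}=\bigoplus_r \tilde N^{\bullet}_{(r)}$ is a direct sum of bounded complexes of finite free $\Lambda$-modules with continuous $G_K$-action, where $\tilde N^{\bullet}_{(r)}:=(\tilde N^{\bullet})^{(r)}_{\log}$ has all terms logarithmic isoclinic of slope $r$ and only finitely many $r$ occur. Exactness also gives $\cH^i(\tilde N^{\bullet}_{(r)})=(\cH^i\tilde N^{\bullet})^{(r)}_{\log}$. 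I then set $N^{\bullet}:=\bigoplus_{r\in S}\tilde N^{\bullet}_{(r)}$, where $S$ is the set of $r$ for which the block $\tilde N^{\bullet}_{(r)}$ is not acyclic. Discarding the complementary blocks, which form an acyclic direct summand, does not change cohomology, so $N^{\bullet}$ is still quasi-isomorphic to $\tilde N^{\bullet}$, hence to $M^{\bullet}$, and its terms remain finite free over $\Lambda$ with continuous $G_K$-action.

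Finally I would verify the slope condition. For any $r$, the value $r$ is a logarithmic slope of some $N^i$ if and only if the block $N^{\bullet}_{(r)}$ is nonzero, while $r$ is a logarithmic slope of some $\cH^i N^{\bullet}$ if and only if $N^{\bullet}_{(r)}$ is non-acyclic, using $\cH^i(N^{\bullet}_{(r)})=(\cH^i N^{\bullet})^{(r)}_{\log}$. By construction $N^{\bullet}_{(r)}$ equals $\tilde N^{\bullet}_{(r)}$ for $r\in S$ and vanishes otherwise, so it is nonzero exactly when it is non-acyclic, which gives the desired equivalence. Note that one implication, namely that a logarithmic slope of some $\cH^i N^{\bullet}$ is a logarithmic slope of $N^i$, is automatic for any complex by \cref{slopes_from_coho_to_complex}; the real content lies in the reverse implication, which the removal of acyclic blocks is designed to produce.
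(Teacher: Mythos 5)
Your proof is correct and follows essentially the same route as the paper: the paper obtains your first step by simply citing \cite[03TT]{SP} (your argument via a finite quotient $Q$, a finite free $\Lambda[Q]$-resolution, and truncation using finite tor-dimension is in substance a proof of that tag), and then performs the identical logarithmic slope-block decomposition via \cref{slope_exact} and \cref{GKstable}, discarding exactly the acyclic blocks. Your final verification is the same as the paper's, since by exactness a block $M^{\bullet,(r)}_{\log}$ is acyclic precisely when $r$ is not a logarithmic slope of any $\cH^i M^{\bullet}$.
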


\begin{proof}
By \cite[03TT]{SP}, we can assume that $M^{\bullet}$ is a bounded complex of free $\Lambda$-modules of finite type with continuous $G_K$-action.
We have 
$$
M^{\bullet} = \bigoplus_{r\in \mathds{Q}_{\geq 0}} M^{\bullet, (r)}_{\log} 
$$
where each factor is a bounded complex of free $\Lambda$-modules of finite type by \cref{slope_exact}.
By \cref{GKstable}, the continuous $G_K$-action on $M^{\bullet}$  restricts to a continuous $G_K$-action on each $M^{\bullet, (r)}_{\log}$.
If $r\in \mathds{Q}_{\geq 0}$ is not a logarithmic slope of any $\cH^i M^{\bullet}$, then by \cref{slope_exact}, we have
$$
\cH^i (M^{\bullet, (r)}_{\log})\simeq (\cH^i M^{\bullet})^{(r)}_{\log}\simeq 0 \ . 
$$
Thus $M^{\bullet, (r)}_{\log}$ is acyclic. Hence, at the cost of removing the direct factors $M^{\bullet, (r)}_{\log}$ from $M^{\bullet}$ for all $r$ as above, we can assume that if $r\in \mathds{Q}_{\geq 0}$ is a logarithmic slope of some $M^i$, then it is a logarithmic slope of some $\cH^i M^{\bullet}$.
Since the converse is  true by \cref{slopes_from_coho_to_complex}, 
the proof of \cref{good_presentation_complex} is complete.
\end{proof}

\begin{cor}\label{slope_base_change_module}
Let $M^{\bullet}$ be a bounded finite tor-dimension complex of  
finitely generated  $\Lambda$-modules with continuous $G_K$-action.
Let $\Lambda\to \Lambda'$ be a morphism between finite local rings of residue characteristic $\ell \neq p$.
If $r\in \mathds{Q}_{\geq 0}$ is a logarithmic slope of some $\cH^i (M^{\bullet}\otimes_{\Lambda}^{L} \Lambda')$,  then it is a logarithmic slope of some $\cH^i M^{\bullet}$.
\end{cor}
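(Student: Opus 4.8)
The plan is to reduce the statement to the free presentation furnished by \cref{good_presentation_complex} and then to combine the term-wise base change of slopes from \cref{slope_tensor} with the passage from cohomology to terms in \cref{slopes_from_coho_to_complex}. First I would invoke \cref{good_presentation_complex} to replace $M^{\bullet}$ by a bounded complex $N^{\bullet}$ of free $\Lambda$-modules of finite type carrying a continuous $G_K$-action, quasi-isomorphic to $M^{\bullet}$ and enjoying the property that $r\in \bQ_{\geq 0}$ is a logarithmic slope of some $N^i$ if and only if it is a logarithmic slope of some $\cH^i N^{\bullet}$. Since each $N^i$ is free, hence flat over $\Lambda$, the derived tensor product $M^{\bullet}\otimes_{\Lambda}^{L}\Lambda'$ is computed by the honest complex $N^{\bullet}\otimes_{\Lambda}\Lambda'$, whose terms $N^i\otimes_{\Lambda}\Lambda'$ are free $\Lambda'$-modules of finite type equipped with the continuous $G_K$-action coming from $N^i$ (the action on $\Lambda'$ being trivial).

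Now suppose $r$ is a logarithmic slope of some $\cH^i(M^{\bullet}\otimes_{\Lambda}^{L}\Lambda')\simeq \cH^i(N^{\bullet}\otimes_{\Lambda}\Lambda')$. Applying \cref{slopes_from_coho_to_complex} over the ring $\Lambda'$ to the complex $N^{\bullet}\otimes_{\Lambda}\Lambda'$, I obtain that $r$ is a logarithmic slope of the term $N^i\otimes_{\Lambda}\Lambda'$. Because $N^i$ is a finitely generated free $\Lambda$-module, \cref{slope_tensor} applies to the morphism $\Lambda\to \Lambda'$ and shows that $r$ is a logarithmic slope of $N^i\otimes_{\Lambda}\Lambda'$ if and only if $r$ is a logarithmic slope of $N^i$. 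Hence $r$ is a logarithmic slope of $N^i$, and by the defining property of $N^{\bullet}$ it is then a logarithmic slope of some $\cH^j N^{\bullet}\simeq \cH^j M^{\bullet}$, which is the desired conclusion (note that the cohomological index is allowed to change, as only the existence of such a degree is claimed).

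The only real work is hidden in the reduction step, and this is exactly why \cref{good_presentation_complex} was proved beforehand: the base-change statement \cref{slope_tensor} is available only for free modules, so one cannot apply it directly to the possibly non-free cohomology modules of $M^{\bullet}$. The free presentation $N^{\bullet}$ simultaneously resolves two issues, namely it makes the derived tensor product an ordinary tensor product so that \cref{slopes_from_coho_to_complex} can be run over $\Lambda'$, and it makes each term free so that \cref{slope_tensor} applies term-wise; the matching of slopes of terms with slopes of cohomology then transports the information back to $\cH^{\bullet}M^{\bullet}$. I expect no further obstacle: the $G_K$-equivariance is preserved at each step by \cref{GKstable} together with the functoriality of $-\otimes_{\Lambda}\Lambda'$, and the exactness of the logarithmic slope functors \cref{slope_exact} underlies both auxiliary lemmas being invoked.
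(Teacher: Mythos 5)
Your proof is correct and follows essentially the same route as the paper's own argument: reduce via \cref{good_presentation_complex} to a free presentation so that $M^{\bullet}\otimes_{\Lambda}^{L}\Lambda'$ is computed by the ordinary tensor product, apply \cref{slopes_from_coho_to_complex} over $\Lambda'$ and then \cref{slope_tensor} termwise, and transport the slope back to cohomology via the defining property of the presentation. The only (harmless) difference is that you make explicit the flatness and $G_K$-equivariance bookkeeping that the paper leaves implicit.
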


\begin{proof}
By \cref{good_presentation_complex}, we can assume that $M^{\bullet}$ is a bounded complex of free $\Lambda$-modules of finite type with continuous $G_K$-action such that $r\in \mathds{Q}_{\geq 0}$ is a logarithmic slope of some $M^i$ if and only if it is a logarithmic slope of some $\cH^i M^{\bullet}$.
Let $i\in \bZ$ and let $r\in \mathds{Q}_{\geq 0}$ be a logarithmic slope of $\cH^i (M^{\bullet}\otimes_{\Lambda}^{L} \Lambda')$.
Since, 
$$
M^{\bullet}\otimes_{\Lambda}^{L} \Lambda'  \simeq M^{\bullet}\otimes_{\Lambda} \Lambda'   \ ,
$$
we deduce from \cref{slopes_from_coho_to_complex} that $r$ is a logarithmic slope of $M^i\otimes_{\Lambda} \Lambda'$.
By \cref{slope_tensor}, we deduce that $r$ is a logarithmic slope of $M^i$. 
By construction, we get that $r$ is a logarithmic slope of some $\cH^j M^{\bullet}$.
\end{proof}

\subsection{Co-induction and ramification}

\begin{recollection}
Let $G$ be a profinite group and let $H\subset G$ be a closed subgroup. 
Following \cite[\textsection 2.5]{gal},  the scalar restriction from  $\Lambda$-modules with continuous $G$-action to $\Lambda$-modules  with continuous $H$-action admits  a right adjoint called the \textit{co-induction}. 
By construction, the co-induction assigns to a $\Lambda$-module $M$ with continuous $H$-action the $\Lambda$-module  with continuous $G$-action described as
$$
\CoInd^H_G(M) :=\{f : G\to M \text{ continuous commuting with the left action of $H$}\}
$$
where $M$ is endowed with  the discrete topology.
By construction $g\in G$ acts on $\CoInd^H_G(M)$ by 
$$
(g\cdot f)(x)= f(xg) \ .
$$

\begin{lem}\label{invariant_codinduction}
Let  $H''\subset H'\subset H$ be closed subgroups such that $H'$ and $H''$ are normal in $G$.
Then, the following holds:
\begin{enumerate}\itemsep=0.2cm
\item $\CoInd^H_G(M)^{H'}=\{f\in \CoInd^H_G(M) \text{ such that } f(G)\subset M^{H'}  \}$.

\item If furthermore  $H$ is open in $G$,  we have 
$$
\CoInd^H_G(M)^{H''}=\CoInd^H_G(M)^{H'} \text{ if and only if } M^{H''}=M^{H'} \ .
$$
\end{enumerate}

\end{lem}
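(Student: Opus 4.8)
The plan is to unwind the definition of co-induction and characterize the $H'$-invariants and $H''$-invariants by evaluating functions $f : G \to M$.

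For part (1), recall that $f \in \CoInd^H_G(M)$ means $f$ is continuous and satisfies $f(hx) = h \cdot f(x)$ for all $h \in H$. The element $h' \in H'$ acts by $(h' \cdot f)(x) = f(x h')$. I would compute: $f$ is $H'$-invariant if and only if $f(x h') = f(x)$ for all $x \in G$ and all $h' \in H'$. Since $H'$ is normal in $G$, for fixed $x$ we can write $x h' = (x h' x^{-1}) x$ with $x h' x^{-1} \in H'$. The subtle point is that $f(x h')$ relates to $f(x)$ not directly but through the value at $x$; I would instead argue directly. The claim is that $H'$-invariance is equivalent to $f(G) \subset M^{H'}$. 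Starting from $f(x) \in M^{H'}$ for all $x$: we must check $f(x h') = f(x)$. Using normality, $x h' = \tilde{h}' x$ for some $\tilde{h}' = x h' x^{-1} \in H'$, and the $H$-equivariance gives $f(\tilde{h}' x) = \tilde{h}' \cdot f(x) = f(x)$ since $f(x) \in M^{H'}$ and $\tilde h' \in H'$. Conversely, if $f$ is $H'$-invariant, then for any $x$ and $h' \in H'$, writing $h' x = x \cdot (x^{-1} h' x)$ and using that $x^{-1} h' x \in H'$ by normality, the invariance $(  (x^{-1} h' x) \cdot f)(x) = f(x)$ combined with equivariance forces $h' \cdot f(x) = f(x)$, so $f(x) \in M^{H'}$. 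This gives the desired characterization.

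For part (2), I would use part (1) twice: $\CoInd^H_G(M)^{H''}$ consists of $f$ with $f(G) \subset M^{H''}$, and similarly for $H'$. Since $M^{H''} \supseteq M^{H'}$ always, the inclusion $\CoInd^H_G(M)^{H'} \subseteq \CoInd^H_G(M)^{H''}$ is automatic, and equality holds iff no $f$ lands in $M^{H''} \setminus M^{H'}$ but not $M^{H'}$. If $M^{H''} = M^{H'}$ then clearly the two invariant subspaces coincide by part (1). For the converse, I need to produce, whenever $M^{H''} \neq M^{H'}$, a function $f \in \CoInd^H_G(M)$ taking a value in $M^{H''} \setminus M^{H'}$; this is where the openness of $H$ in $G$ enters. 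When $H$ is open, $G/H$ is finite (as $G$ is profinite), so I can choose a set-theoretic section and construct $f$ supported appropriately: pick $m \in M^{H''} \setminus M^{H'}$, define $f$ on the coset $H$ by $f(h) = h \cdot m$ and extend by zero on other cosets, checking continuity (automatic since $H$ is open and the target is discrete) and $H$-equivariance. Then $f(G) \subset M^{H''}$ but $f(1) = m \notin M^{H'}$, witnessing $\CoInd^H_G(M)^{H''} \neq \CoInd^H_G(M)^{H'}$.

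The main obstacle is the converse direction of part (2): one must genuinely build an explicit co-induced function realizing a given invariant vector, and verify it is well-defined, continuous, and $H$-equivariant. The openness hypothesis is exactly what makes $G/H$ finite and the characteristic-function-style construction legitimate, so I expect the bookkeeping of checking $H$-equivariance on the single coset $H$ (and the vanishing on the complement) to be the delicate step; part (1) and the forward direction of (2) are formal consequences of the definitions.
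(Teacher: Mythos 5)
Your proof is correct and follows essentially the same route as the paper: part (1) via the identical conjugation argument exploiting normality of $H'$ in $G$, and part (2) by deducing the easy direction from (1) and, for the converse, explicitly constructing an element of $\CoInd^H_G(M)$ from a vector $m\in M^{H''}$, with openness of $H$ ensuring finiteness of $H\backslash G$ and continuity. The only cosmetic difference is that your witness function is supported on the single coset $H$ and extended by zero, whereas the paper's function takes the value $h\cdot x$ on every coset $Hg_i$; both constructions serve the same purpose.
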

\begin{proof}
We prove (1).
Let $f\in \CoInd^H_G(M)^{H'}$.
For $h\in H'$ and $g\in G$, we have $g^{-1}hg \in H'$.
Thus
$$
(g^{-1}hg\cdot f)(g)= f(g)=f(gg^{-1}hg)=h\cdot f(g)
$$
Hence,  $f(G)\subset M^{H'}$.
The reverse inclusion is proved similarly.
We now prove (2) and assume that $H$ is open in $G$.
The converse implication is tautological due to (1).
Let us prove the direct implication.
Since $H''\subset H'$, we have $M^{H'}\subset M^{H''}$.
Let $x\in M^{H''}$ and consider a set of representatives $g_1,\dots, g_n$ for $H\backslash G$.
Let $f : G\to M$ be the function defined by
$$
f(hg_i)=h\cdot x 
$$
for every $h\in H$ and $i=1,\dots, n$.
Since $H\subset G$ is open,  the continuity of $f$ can be checked on  each $H g_i$.
On the other hand,  the restriction of $f$ to $H g_i$ decomposes as
$$
\begin{tikzcd} 
 H g_i   \ar{r}{\cdot g_i^{-1}}    &  H   \ar{r}{\cdot x } &  M 
\end{tikzcd} 
$$
which is indeed continuous as a composition of continuous maps.
Thus,  $f \in \CoInd^H_G(M)$.
Since $H''$ is normal in $H$ and $x\in M^{H''}$,  we have $f(G)\subset M^{H''}$.
By (1), we deduce that $f$ lies in $\CoInd^H_G(M)^{H''}=\CoInd^H_G(M)^{H'}$.
By (1) again, we deduce $f(g_1)=x\in M^{H'}$.
This concludes the proof of \cref{invariant_codinduction}.
\end{proof}

\end{recollection}

\begin{lem}\label{conductor_extension}
In the setting of  \cref{local_fields}, let $K'/K$ be a finite separable extension in $K^{\sep}$.
Let $ r\in \mathds{Q}_{> 0}$.
Then, the following are equivalent: 
\begin{enumerate}\itemsep=0.2cm
\item $G_{K,\log}^{r} \subset G_{K'}$.
\item $G_{K,\log}^{r}$ acts trivially on $\CoInd^{G_{K'}}_{G_K}(\Lambda)$.
\item $r>\lc_K(\CoInd^{G_{K'}}_{G_K}(\Lambda))$.
\end{enumerate}
\end{lem}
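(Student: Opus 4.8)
The plan is to rephrase everything in terms of the $\Lambda$-linear permutation representation attached to $\CoInd^{G_{K'}}_{G_K}(\Lambda)$. Write $G:=G_K$, $H:=G_{K'}$ and $N:=G^r_{K,\log}$. Since $K'/K$ is finite separable, $H$ is open in $G$, so the coset space $H\backslash G$ is finite; because $\Lambda$ carries the trivial $H$-action, an element of $M:=\CoInd^{H}_{G}(\Lambda)$ is exactly a left-$H$-invariant function $f:G\to\Lambda$ (continuity being automatic since $H$ is open), i.e. a function on $H\backslash G$, with $g_0\in G$ acting by $(g_0\cdot f)(x)=f(xg_0)$. Thus $M$ is the permutation representation of $G$ on the finite set $H\backslash G$. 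I will also use that $N$ is a closed \emph{normal} subgroup of $G$ by \cref{local_fields}, and that $N\subseteq P_K$ for $r>0$ by \cref{propramfil}.

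For $(1)\Rightarrow(2)$ I would invoke \cref{invariant_codinduction}(1) with $H'=N$: this is legitimate because $N$ is normal in $G$ and, by hypothesis $(1)$, contained in $H$. Since $N$ then acts trivially on $\Lambda$, we get $\Lambda^{N}=\Lambda$, so $M^{N}=\{f:f(G)\subseteq\Lambda^{N}=\Lambda\}=M$; that is, $N$ acts trivially, which is $(2)$. For the converse $(2)\Rightarrow(1)$, I would test triviality on the single function $f_0:=\mathbf 1_{H}$ (the indicator of the identity coset, which lies in $M$). For $\sigma\in N$, triviality forces $f_0(x\sigma)=f_0(x)$ for all $x$; evaluating at $x=e$ gives $f_0(\sigma)=f_0(e)=1$, hence $\sigma\in H$. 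As $\Lambda\neq 0$ this argument is valid, and it yields $N\subseteq H$, i.e. $(1)$.

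The equivalence $(2)\Leftrightarrow(3)$ is where the ramification input enters. I would use the logarithmic slope decomposition \eqref{twodecomp} of $M$ as a $P_K$-module, $M=\bigoplus_{s}M^{(s)}_{\log}$; by \cref{GKstable} each factor is stable under $G_K$, hence under $N$, so $N$ acts trivially on $M$ if and only if it acts trivially on every $M^{(s)}_{\log}$. If $r>\lc_K(M)$, then every logarithmic slope $s$ satisfies $s<r$, whence $N=G^r_{K,\log}\subseteq G^{s+}_{K,\log}$ (as $G^{s+}_{K,\log}=\overline{\bigcup_{t>s}G^t_{K,\log}}$), and $G^{s+}_{K,\log}$ acts trivially on $M^{(s)}_{\log}$ by construction of the decomposition; this gives $(2)$. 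Conversely, if $(2)$ holds but $r\leq\lc_K(M)$, I would set $s:=\lc_K(M)>0$, so that $M^{(s)}_{\log}\neq 0$ and $(M^{(s)}_{\log})^{G^s_{K,\log}}=0$; since the filtration is decreasing and $r\le s$ we have $G^s_{K,\log}\subseteq N$, so triviality of $N$ forces $M^{(s)}_{\log}=(M^{(s)}_{\log})^{N}\subseteq (M^{(s)}_{\log})^{G^{s}_{K,\log}}=0$, a contradiction; hence $r>\lc_K(M)$, which is $(3)$.

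The only genuinely delicate bookkeeping, and the step I would treat most carefully, is the $(2)\Leftrightarrow(3)$ comparison: keeping straight the strict versus non-strict inequalities together with the description $G^{s+}_{K,\log}=\overline{\bigcup_{t>s}G^t_{K,\log}}$ and the monotonicity of the filtration, and handling the boundary slope $s=0$ (where one uses $r>0$ and $G^r_{K,\log}\subseteq P_K=G^{0+}_{K,\log}$ from \cref{propramfil}, $P_K$ acting trivially on $M^{(0)}_{\log}=M^{P_K}$). Everything else is formal once $M$ is identified with the permutation representation; no input beyond \cref{invariant_codinduction}, \cref{GKstable}, \cref{propramfil} and the defining properties of the logarithmic slope decomposition is needed.
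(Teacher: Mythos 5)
Your proof is correct and takes essentially the same route as the paper: the indicator function of the trivial coset evaluated at the identity for $(2)\Rightarrow(1)$, normality of $G^{r}_{K,\log}$ in $G_K$ for $(1)\Rightarrow(2)$ (you pass through \cref{invariant_codinduction}(1) where the paper acts directly on the coset set $G_{K'}\backslash G_K$, a cosmetic difference), and the logarithmic slope decomposition for $(2)\Leftrightarrow(3)$. The paper dismisses $(2)\Leftrightarrow(3)$ as obvious; your careful expansion of it, including the boundary case $s=0$ handled via $G^{r}_{K,\log}\subseteq P_K$ for $r>0$, is exactly the intended bookkeeping and is accurate throughout.
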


\begin{proof}
The equivalence between (2) and (3) is obvious.
Suppose that (2) holds.
Since $ G_{K'}$ is open in  $G_{K}$, we have 
$$
\CoInd^{G_{K'}}_{G_K}(\Lambda) \simeq \Fun(G_{K'}\backslash G_K,\Lambda) \ .
$$
Let $g$ be an element of $G_K$ acting trivially on $\CoInd^{G_{K'}}_{G_K}(\Lambda)$.
Let $f : G_{K'}\backslash G_K\to \Lambda$ sending $G_{K'}$ to 1 and sending a class distinct from $G_{K'}$  to 0.
Then,  $g\cdot f = f$ yields $g\in G_{K'}$.
Thus,  (1) is true.
Suppose that (1) holds and let us prove (2).
It is enough to prove that $G_{K,\log}^{r}$ acts trivially on $G_{K'}\backslash G_K$, which follows immediately from (1) and the fact that $G_{K,\log}^{r}$ is normal in $G_K$.
\end{proof}

\begin{definition} 
In the setting of \cref{local_fields}, let $L/K$ be a finite extension in $\overline{K}$ and let $K'/K$ be the separable closure of $K$ in $L$.  
We define
$$
\lc_{L/K} :=\lc_K(\CoInd^{G_{K'}}_{G_K}(\Lambda))\in \mathds{Q}_{\geq 0}   \ .
$$
\end{definition}

\begin{lemma}\label{equality_ramification_group}
In the setting of  \cref{local_fields},  assume that the residue field $F$ is perfect.
Let $K'/K$ be a finite  extension  in $K^{\sep}$.
Then for every $r > \lc_{K'/K}$, we have 
$$
G_{K,\log}^{r}= G_{K',\log}^{\psi_{K'/K}(r)}  
$$
 in $G_{K'}$, where $\psi_{K'/K} : \mathds{R}_{\geq 0}\to \mathds{R}_{\geq 0}$ is Herbrand's function \cite[IV \textsection 3]{CL}.
\end{lemma}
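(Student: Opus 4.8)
The plan is to reduce everything to the classical ramification filtration, where the desired identity becomes a standard consequence of Herbrand's theory, and then to use \cref{conductor_extension} to upgrade an intersection formula into the stated equality. Since $F$ is perfect and $K'/K$ is finite separable, the residue field of $K'$ is a finite extension of $F$, hence also perfect. Therefore \cref{propramfil}-(2) applies to both $K$ and $K'$ and gives $G^r_{K,\log}=G^r_{K,\cl}$ and $G^{\psi_{K'/K}(r)}_{K',\log}=G^{\psi_{K'/K}(r)}_{K',\cl}$, where $G^\bullet_{\cl}$ denotes the classical upper-numbering ramification filtration of \cite{CL}. Thus it suffices to prove that for $r>\lc_{K'/K}$ one has $G^r_{K,\cl}=G^{\psi_{K'/K}(r)}_{K',\cl}$ inside $G_{K'}$.

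First I would record the containment. For $r>\lc_{K'/K}$, the equivalence of (1) and (3) in \cref{conductor_extension} gives $G^r_{K,\log}\subset G_{K'}$, hence $G^r_{K,\cl}\subset G_{K'}$. Consequently the claimed equality will follow as soon as we establish the classical intersection formula
$$
G_{K'}\cap G^v_{K,\cl}=G^{\psi_{K'/K}(v)}_{K',\cl} \qquad (v\in \mathds{R}_{\geq 0}),
$$
since the left-hand side is then simply $G^r_{K,\cl}$. This is the only place where the hypothesis $r>\lc_{K'/K}$ intervenes: it is exactly what is needed to turn the intersection formula into a genuine equality of ramification subgroups, rather than the weaker statement that $G^r_{K,\cl}$ meets $G_{K'}$ in $G^{\psi_{K'/K}(r)}_{K',\cl}$.

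It remains to prove the intersection formula, which is pure classical ramification theory. I would choose a finite Galois extension $L/K$ inside $K^{\sep}$ with $K'\subset L$, and set $G=\Gal(L/K)$ and $H=\Gal(L/K')$, a not necessarily normal subgroup with $L^H=K'$. Serre's description of ramification in subgroups \cite[IV]{CL} gives, in lower numbering, $H_u=H\cap G_u$ for all $u$. Converting to upper numbering via $G^v=G_{\psi_{L/K}(v)}$ and $H^w=H_{\psi_{L/K'}(w)}$, together with the transitivity $\psi_{L/K}=\psi_{L/K'}\circ\psi_{K'/K}$ of Herbrand's functions \cite[IV]{CL}, yields
$$
H\cap G^v=H\cap G_{\psi_{L/K}(v)}=H_{\psi_{L/K}(v)}=H_{\psi_{L/K'}(\psi_{K'/K}(v))}=H^{\psi_{K'/K}(v)}.
$$
Since $\psi_{K'/K}$ depends only on $K'/K$ and not on $L$, passing to the inverse limit over all such $L$ (so that $G_K=\varprojlim G$, $G_{K'}=\varprojlim H$, and the upper-numbering filtrations pass to the limit) gives the intersection formula for the absolute Galois groups. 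Combining it with the containment from the previous paragraph completes the proof.

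The main obstacle I anticipate is bookkeeping rather than conceptual. I must handle the case where $K'/K$ is not Galois, which requires defining $\psi_{K'/K}$ for a general finite separable extension, for instance as $\psi_{L/K'}^{-1}\circ\psi_{L/K}$, and checking that it is independent of the auxiliary Galois extension $L$ and compatible with the transitivity formula used above; I must also make sure that the index conventions for $G^\bullet_{\cl}$ at the profinite level match those in \cref{propramfil}-(2). These are standard points in Serre's framework, so the real substance of the argument lies in the reduction to the classical case via \cref{propramfil}-(2) and the containment furnished by \cref{conductor_extension}.
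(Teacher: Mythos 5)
Your proposal is correct and follows essentially the same route as the paper: both pass to a finite Galois extension $L/K$ containing $K'$, translate the logarithmic filtration into the classical one via \cref{propramfil}-(2) (the paper does this silently when it rewrites $G^r_{K,\log}$ as the lower-numbering group $G_{\psi_{L/K}(r)}$), use $H_u = H\cap G_u$ from \cite[IV Proposition 2]{CL} together with the transitivity $\psi_{L/K}=\psi_{L/K'}\circ\psi_{K'/K}$, invoke \cref{conductor_extension} to get the containment $G^r_{K,\log}\subset G_{K'}$ that upgrades the intersection statement to an equality, and pass to the limit over the cofinal poset of such $L$. The only difference is cosmetic ordering — you isolate the unconditional intersection formula $G_{K'}\cap G^v_{K,\cl}=G^{\psi_{K'/K}(v)}_{K',\cl}$ first and specialize to $r>\lc_{K'/K}$ at the end, whereas the paper imposes the containment from the start — and your closing caveats (defining $\psi_{K'/K}$ for non-Galois $K'/K$ via an auxiliary $L$, checking independence) match the paper's conventions, e.g.\ its definition $\varphi_{K'/K}=\varphi_{L/K}\circ\psi_{L/K'}$ in \cref{phi_inequality}.
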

\begin{proof}
The argument below is extracted from the unpublished note \cite{Teyladic}.
Let $L/K$ be a finite Galois extension in $K^{\sep}$ containing $K'$.
In particular, the extension $L/K'$ is finite Galois.
Put  $G=\Gal(L/K)$ and $H=\Gal(L/K')\subset G$.
Since $r > \lc_{K'/K}$, we have  $G_{K,\log}^{r} \subset G_{K'}$.
Hence,   $G_{K,\log}^r \subset H$.
Thus,
$$
G_{K,\log}^{r}  =  H\cap G_{K,\log}^{r}  =   H\cap G_{\psi_{L/K}(r)} 
   =  H_{\psi_{L/K}(r)} = H_{K',\log}^{ \varphi_{L/K'}\circ \psi_{L/K}(r)}
$$
where the third equality comes from the compatibility of the lower-numbering ramification filtration with subgroups \cite[IV Proposition 2]{CL}.
From \cite[IV Remark 2]{CL}, we have 
$\psi_{L/K}= \psi_{L/K'}\circ \psi_{K'/K}$.
Hence, 
$$
\varphi_{L/K'}\circ\psi_{L/K}=  \psi_{K'/K}  \ .
$$
Thus, 
\begin{equation}\label{Gr_and_H}
\Gal(L/K)_{K,\log}^{r} = \Gal(L/K')_{K',\log}^{\psi_{K'/K}(r)} \ .
\end{equation}
Let $\mathcal{P}$ be  the poset  of finite Galois extensions $L/K$ in $K^{\sep}$ containing $K'$, ordered by the inclusion.
Then, $\mathcal{P}$ is cofinal both in the poset of finite Galois extensions of $K$ in $K^{\sep}$ and in the poset of finite Galois extensions of $K'$ in $K^{\sep}$.
\cref{equality_ramification_group} thus follows from (\ref{Gr_and_H}) by passing to the limit over $\mathcal{P}^{\op}$.
\end{proof}

\begin{lem}[{\cite{Teyladic,KatoH_wild_ram_res_curves}}]\label{equality_ramification_group_ins}
In the setting of  \cref{local_fields},  assume that the residue field $F$ is perfect.
Let $L/K$ be a finite purely inseparable extension  in $\overline{K}$ and let $ L^{\sep}$ be the separable closure of $L$ in $\overline{K}$.
Then,  the  isomorphism $\iota : G_{L}\xrightarrow{\sim} G_{K}$ induced by restriction from $L^{\sep}$ to $K^{\sep}$ is compatible with the logarithmic ramification filtration, that is 
$$
\iota(G_{L,\log}^r)=G_{K,\log}^r 
$$
for every $r\in \mathds{Q}_{\geq 0}$.
\end{lem}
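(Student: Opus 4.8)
The plan is to show that the restriction isomorphism $\iota$ of the statement is nothing but the transport of structure along a \emph{Frobenius isomorphism of valued fields}, for which compatibility with the logarithmic ramification filtration is automatic. This bypasses any comparison of lower numberings or Herbrand functions.

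First I would reduce to a model situation. Purely inseparable extensions of degree $>1$ only occur in equal characteristic, so we may assume $\ch K=p$ (otherwise $L=K$ and there is nothing to prove). Since $G_K$ and the Abbes--Saito logarithmic filtration are insensitive to completion, we may assume $K$ complete; by the Cohen structure theorem we then have $\sO_K\simeq F[[t]]$, i.e. $K\simeq F((t))$. As $F$ is perfect we have $F^p=F$, whence $[K:K^p]=p$; consequently every finite purely inseparable extension of $K$ is of the form $K^{1/p^m}=F((t^{1/p^m}))$. In particular $L=K^{1/q}$ with $q:=p^n$ determined by $[L:K]=q$. This is the only place where the perfectness of $F$ is genuinely used, namely to exclude inseparability coming from the residue field.

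Next I would introduce the Frobenius. Since $\ol{K}$ is algebraically closed, the $q$-power map $\Phi_q:\ol{K}\to\ol{K}$, $x\mapsto x^q$, is a field automorphism with inverse $x\mapsto x^{1/q}$. Because $F^q=F$, it maps $\sO_L$ onto $\sO_K$ and sends the uniformizer $t^{1/q}$ of $L$ to the uniformizer $t$ of $K$; hence its restriction $\Phi_q:L\iso K$ is an isomorphism of henselian discrete valuation fields preserving the normalized valuations, and it carries $L^{\sep}$ onto $K^{\sep}$ (separability is preserved by field isomorphisms). Since the Abbes--Saito logarithmic filtration is defined intrinsically from the valued field and its separable closure, it is functorial for such isomorphisms: the induced map $(\Phi_q)_\ast:G_L\iso G_K$, $\sigma\mapsto \Phi_q\circ\sigma\circ\Phi_q^{-1}$, satisfies $(\Phi_q)_\ast(G^r_{L,\log})=G^r_{K,\log}$ for all $r\in\bQ_{\geq 0}$.

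Finally I would identify $(\Phi_q)_\ast$ with $\iota$ by a one-line computation. For $\sigma\in G_L$ and $y\in K^{\sep}$ one has $\Phi_q^{-1}(y)=y^{1/q}\in L^{\sep}$, and since $\sigma$ commutes with the $q$-power map,
$$
(\Phi_q)_\ast(\sigma)(y)=\Phi_q\bigl(\sigma(y^{1/q})\bigr)=\bigl(\sigma(y^{1/q})\bigr)^{q}=\sigma\bigl((y^{1/q})^{q}\bigr)=\sigma(y)=\iota(\sigma)(y).
$$
Thus $(\Phi_q)_\ast=\iota$, and the compatibility of the previous paragraph gives the lemma. The main obstacle is conceptual rather than computational: one must recognize that the purely inseparable base change is \emph{inverted}, as a map of valued fields, by $\Phi_q$, so that although the inclusion $K\hookrightarrow L$ rescales the valuation by $q$, the relevant isomorphism $\iota$ is an isometry and introduces no rescaling of the breaks. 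The remaining points requiring care are the classification forcing $L=K^{1/q}$ and the functoriality of the Abbes--Saito filtration under isomorphisms of henselian valued fields, together with its insensitivity to completion.
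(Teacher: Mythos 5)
Your proposal is correct, but note that the paper itself gives no proof of \cref{equality_ramification_group_ins}: it is quoted from the references \cite{Teyladic,KatoH_wild_ram_res_curves} (only the companion statement \cref{equality_ramification_group} has its argument reproduced). So there is no in-paper proof to match; what you have written is a genuine, self-contained argument, and it is the standard Frobenius transport-of-structure proof one would expect those references to contain. The key points all check out: with $F$ perfect and $K$ complete one has $[K:K^p]=p$, and since every element of a purely inseparable extension of degree $q=p^n$ has exponent at most $n$, one gets $L\subseteq K^{1/q}$ and then $L=K^{1/q}$ by counting degrees; $\Phi_q$ restricts to an isometry $L\iso K$ sending $\sO_L$ onto $\sO_K$ and $L^{\sep}$ onto $K^{\sep}$; the Abbes--Saito filtration, being constructed from the valued field alone, is preserved under any such isomorphism (and under replacing the separable closure, since $G^r_{K,\log}$ is normal); and your one-line computation correctly identifies $(\Phi_q)_\ast$ with $\iota$ because every field homomorphism commutes with $q$-th powers.

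Two small points deserve a more careful phrasing. First, the reduction to the complete case is slightly more than an invocation of ``insensitivity to completion'': one should say that the logarithmic filtration of a henselian field is by definition (or by a standard compatibility) that of its completion under $G_K\iso G_{\hat K}$, and that one replaces the pair $(L,K)$ by $(\hat L,\hat K)$, checking the square of restriction isomorphisms commutes. Beware that $[\hat L:\hat K]$ may be strictly smaller than $[L:K]$ --- non-excellent henselian discrete valuation rings with perfect residue field do exist, and for these a purely inseparable $L/K$ can even become trivial after completion --- but this is harmless, since you then prove the statement for the completed pair with its own degree. Second, when you say every finite purely inseparable extension is $K^{1/p^m}$, the clean justification is the exponent bound $[K(x):K]=p^{e(x)}\mid [L:K]$ for $x\in L$ purely inseparable, which gives $L\subseteq K^{1/q}$ directly; the Cohen structure theorem is then only needed to see $[K:K^p]=p$ (equivalently $\Phi_q(\sO_L)=\sO_K$), which is exactly where perfectness of $F$ enters, as you correctly flag. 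With these glosses your argument is complete.
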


\begin{cor}\label{equality_ramification_group_general}
In the setting of  \cref{local_fields},  assume that the residue field $F$ is perfect.
Let $L/K$ be a finite  extension  in $\overline{K}$ and let $K'/K$ be the separable closure of $K$ in $L$.
Then for every $r > \lc_{L/K}$, we have 
$$
G_{K,\log}^r= \iota(G_{L,\log}^{\psi_{K'/K}(r)}  )
$$
where $\iota : G_{L}\xrightarrow{\sim}G_{K'}$ is the isomorphism induced by restriction from $L^{\sep}$ to $K^{\sep}$  in $\overline{K}$.
\end{cor}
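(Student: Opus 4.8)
The plan is to reduce the statement to the two preceding results by factoring the finite extension $L/K$ through its maximal separable subextension. Write $K\subset K'\subset L$, where $K'/K$ is finite separable and $L/K'$ is finite purely inseparable. The separable step $K'/K$ is governed by \cref{equality_ramification_group}, while the purely inseparable step $L/K'$ is governed by \cref{equality_ramification_group_ins}; the corollary should then follow by composing the two identifications of logarithmic ramification groups that these lemmas provide.

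First I would check that the hypothesis $r>\lc_{L/K}$ is precisely the one required to apply \cref{equality_ramification_group}. By definition $\lc_{L/K}=\lc_K(\CoInd^{G_{K'}}_{G_K}(\Lambda))$ depends only on the separable closure $K'$ of $K$ in $L$, and since $K'/K$ is itself separable this quantity equals $\lc_{K'/K}$. Hence $r>\lc_{L/K}$ reads $r>\lc_{K'/K}$, and \cref{equality_ramification_group} yields $G_{K,\log}^r=G_{K',\log}^{\psi_{K'/K}(r)}$ as subgroups of $G_{K'}$.

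Next I would feed the purely inseparable extension $L/K'$ into \cref{equality_ramification_group_ins}, which supplies the isomorphism $\iota:G_L\xrightarrow{\sim}G_{K'}$ induced by restriction from $L^{\sep}$ to $K^{\sep}$, together with the compatibility $\iota(G_{L,\log}^s)=G_{K',\log}^s$ for every $s\in\mathds{Q}_{\geq 0}$. Specializing to $s=\psi_{K'/K}(r)$ and chaining with the equality from the previous paragraph gives $G_{K,\log}^r=G_{K',\log}^{\psi_{K'/K}(r)}=\iota(G_{L,\log}^{\psi_{K'/K}(r)})$, which is the assertion.

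The argument is essentially a formal assembly of the two lemmas, so I do not anticipate a genuine obstacle; the points demanding care are bookkeeping ones. To invoke \cref{equality_ramification_group_ins} for $L/K'$ I need the residue field of $K'$ to be perfect, which holds because it is a finite extension of the perfect field $F$. I would also confirm that the $\iota$ produced by \cref{equality_ramification_group_ins} for $L/K'$ is the isomorphism named in the statement; this hinges on $K^{\sep}$ being simultaneously the separable closure of $K$ and of $K'$ in $\overline{K}$, which holds since $K'/K$ is separable algebraic. Finally, the Herbrand function appearing is $\psi_{K'/K}$ rather than a composite, which is automatic because the purely inseparable step contributes the identity on logarithmic slopes.
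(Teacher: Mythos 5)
Your proposal is correct and follows essentially the same route as the paper: both factor $L/K$ through the separable closure $K'$, use the identity $\lc_{L/K}=\lc_{K'/K}$ to apply \cref{equality_ramification_group} to $K'/K$, and then conclude via \cref{equality_ramification_group_ins} applied to the purely inseparable step $L/K'$. Your additional bookkeeping checks (perfectness of the residue field of $K'$ as an algebraic extension of the perfect field $F$, and the identification of $\iota$) are sound and merely make explicit what the paper leaves implicit.
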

\begin{proof}
Since $r > \lc_{L/K}= \lc_{K'/K}$,  we have 
$$
G_{K,\log}^{r}    = G_{K',\log}^{\psi_{K'/K}(r)} = \iota(G_{L,\log}^{\psi_{K'/K}(r)}  ) 
$$
where the first equality follows from \cref{equality_ramification_group} and the second one from 
\cref{equality_ramification_group_ins}.

\end{proof}

\begin{prop}\label{slope_induction}
In the setting of  \cref{local_fields},  assume that the residue field $F$ is perfect.
Let $L/K$ be a finite  extension  in $\overline{K}$ and let $K'/K$ be the separable closure of $K$ in $L$.
Let $M$ be a finitely generated $\Lambda$-module with continuous $G_{L}$-action.
For $r>\lc_{L/K}$, the following are equivalent : 
\begin{enumerate}\itemsep=0.2cm
\item $r$  is a logarithmic slope of $\CoInd^{G_{L}}_{G_{K}}(M) $.
\item $\psi_{K'/K}(r)$  is a logarithmic slope of $M$.
\end{enumerate}
\end{prop}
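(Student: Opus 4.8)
The plan is to detect logarithmic slopes as \emph{jumps} of the invariants under the logarithmic ramification filtration, and then to transport these jumps across the co-induction using \cref{invariant_codinduction} together with \cref{equality_ramification_group_general}. Throughout I identify $G_L$ with the open subgroup $G_{K'}\subset G_K$ via the isomorphism $\iota$ of \cref{equality_ramification_group_ins} applied to the purely inseparable extension $L/K'$ (the residue field of $K'$ is perfect, being finite separable over $F$), so that $\CoInd^{G_{L}}_{G_{K}}(M)$ is the co-induction from the \emph{open} subgroup $G_{K'}$ and $M$ becomes a finitely generated $G_{K'}$-module.

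First I would record an elementary slope criterion. Let $N$ be a finitely generated $\Lambda$-module with continuous $P_K$-action and write $N=\bigoplus_{t\geq 0}N^{(t)}_{\log}$ for its logarithmic slope decomposition. From the defining properties of the filtration one reads off, for any $s\in\mathds{Q}_{>0}$,
$$
N^{G^{s}_{K,\log}}=\bigoplus_{t<s}N^{(t)}_{\log} \qquad\text{and}\qquad N^{G^{s+}_{K,\log}}=\bigoplus_{t\leq s}N^{(t)}_{\log},
$$
so that $N^{(s)}_{\log}=N^{G^{s+}_{K,\log}}/N^{G^{s}_{K,\log}}$. Hence $s$ is a logarithmic slope of $N$ if and only if $N^{G^{s}_{K,\log}}\neq N^{G^{s+}_{K,\log}}$. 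I would apply this criterion to $N=\CoInd^{G_{L}}_{G_{K}}(M)$ at $s=r$, and the verbatim analogue over $L$ to $M$ at $s=\psi_{K'/K}(r)$, which is positive because $r>0$ and $\psi_{K'/K}(0)=0$.

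Next comes the transport. Since $r>\lc_{L/K}=\lc_{K'/K}$, \cref{conductor_extension} gives $G^{r}_{K,\log}\subset G_{K'}$; as $G^{r+}_{K,\log}\subseteq G^{r}_{K,\log}$ and both are closed and normal in $G_K$, I may invoke \cref{invariant_codinduction}-(2) with $G=G_K$, $H=G_{K'}$, $H'=G^{r}_{K,\log}$ and $H''=G^{r+}_{K,\log}$, obtaining
$$
\CoInd^{G_{L}}_{G_{K}}(M)^{G^{r+}_{K,\log}}=\CoInd^{G_{L}}_{G_{K}}(M)^{G^{r}_{K,\log}}\ \Longleftrightarrow\ M^{G^{r+}_{K,\log}}=M^{G^{r}_{K,\log}},
$$
where on the right $M$ is seen as a $G_{K'}$-module. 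To identify the right-hand invariants over $L$, I use \cref{equality_ramification_group_general}, which gives $G^{r}_{K,\log}=\iota(G^{\psi_{K'/K}(r)}_{L,\log})$; applying it to all $s>r$ and passing to the closure of the union, and using that $\psi_{K'/K}$ is a continuous increasing bijection carrying $(r,\infty)$ onto $(\psi_{K'/K}(r),\infty)$, I also get $G^{r+}_{K,\log}=\iota(G^{\psi_{K'/K}(r)+}_{L,\log})$. Thus $M^{G^{r}_{K,\log}}=M^{G^{\psi_{K'/K}(r)}_{L,\log}}$ and $M^{G^{r+}_{K,\log}}=M^{G^{\psi_{K'/K}(r)+}_{L,\log}}$ as submodules of $M$. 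Combining these equalities with the displayed equivalence and the slope criterion applied on both sides then yields that $r$ is a logarithmic slope of $\CoInd^{G_{L}}_{G_{K}}(M)$ if and only if $\psi_{K'/K}(r)$ is a logarithmic slope of $M$, as desired.

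I expect the main obstacle to be the bookkeeping around the ``$+$'' filtrations: establishing $G^{r+}_{K,\log}=\iota(G^{\psi_{K'/K}(r)+}_{L,\log})$ requires combining \cref{equality_ramification_group_general} over the cofinal family $\{s>r\}$ with the monotonicity and continuity of Herbrand's $\psi_{K'/K}$, and then matching this carefully against the hypotheses $H''\subseteq H'\subseteq H$ of \cref{invariant_codinduction}-(2) (normality in $G_K$, openness of $H=G_{K'}$, and the inclusion $G^{r}_{K,\log}\subset G_{K'}$ coming from $r>\lc_{K'/K}$). The slope criterion of the second paragraph, although elementary, must also be set up cleanly, since it is precisely what converts the statement about invariants into the statement about logarithmic slopes.
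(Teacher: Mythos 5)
Your proof is correct and follows essentially the same route as the paper's: both detect the slope $r$ as a jump of invariants, transport that jump through the co-induction via \cref{invariant_codinduction}-(2) with $G=G_K$, $H=G_{K'}$, $H'=G^{r}_{K,\log}$, $H''=G^{r+}_{K,\log}$ (legitimated by \cref{conductor_extension} since $r>\lc_{L/K}=\lc_{K'/K}$), and then identify the ramification groups over $K$ and $L$ via \cref{equality_ramification_group_general}. If anything, you are more explicit than the paper on two points it leaves implicit, namely the slope criterion $N^{G^{s}_{K,\log}}\neq N^{G^{s+}_{K,\log}}$ and the verification that $G^{r+}_{K,\log}=\iota\bigl(G^{\psi_{K'/K}(r)+}_{L,\log}\bigr)$ via monotonicity and continuity of $\psi_{K'/K}$.
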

\begin{proof}
Note that $L/K'$ is purely inseparable.
In the above statement, the coinduction is made along the morphism $G_L \xrightarrow{\sim} G_{K'} \subset G_K$.
Since $r>\lc_{L/K}=\lc_{K'/K}$, we have $G_{K,\log}^r \subset G_{K'}$ in virtue of \cref{conductor_extension}  and the groups 
$$
G_{K,\log}^{r+}\subset G_{K,\log}^{r}  \subset G_{K'} 
$$
satisfy the assumptions of \cref{invariant_codinduction} with $G=G_K$ and $H=G_{K'}$ open in $G_K$.
Thus, 
\begin{align*}
&  r \text{ is not a logarithmic slope of } \CoInd^{G_{L}}_{G_{K}}(M)   &    \\
\Longleftrightarrow  &\CoInd^{G_{L}}_{G_{K}}(M)^{G_{K,\log}^{r} }=\CoInd^{G_{L}}_{G_{K}}(M)^{G_{K,\log}^{r+}}       &  \\   
   \Longleftrightarrow &    M^{G_{K,\log}^{r} }=M^{G_{K,\log}^{r+}}          & \text{ \cref{invariant_codinduction}}\\ 
 \Longleftrightarrow  &   M^{G_{L,\log}^{\psi_{K'/K}(r) } }=M^{G_{L,\log}^{\psi_{K'/K}(r) +}}   &   \text{ \cref{equality_ramification_group_general}  } \\
\Longleftrightarrow  &  \psi_{K'/K}(r) \text{ is not a logarithmic slope of }  M.
\end{align*}
\end{proof}

The following lemma is proved in  \cite[IV Proposition 12]{CL} for finite \textit{Galois} extensions.
We however needs the case of an arbitrary finite separable extension.

\begin{lemma}\label{phi_inequality}
In the setting of  \cref{local_fields},  assume that the residue field $F$ is perfect.
Let $K'/K$ be a finite separable extension  in $K^{\sep}$.
Then, the following   hold :
\begin{enumerate}\itemsep=0.2cm
\item The function $\varphi_{K'/K}$ is strictly increasing, piecewise linear and concave.
\item For every $r\geq 0$, we have $\varphi_{K'/K}(r)\leq r$.
\end{enumerate}
\end{lemma}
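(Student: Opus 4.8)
The plan is to reduce to the finite Galois case of \cite[IV Proposition 12]{CL} via a Galois closure and transitivity, and then to read off both assertions from an explicit computation of the slopes of $\varphi_{K'/K}$. First I would choose a finite Galois extension $L/K$ in $K^{\sep}$ containing $K'$; then $L/K$ and $L/K'$ are both finite Galois, so the functions $\varphi_{L/K}$, $\varphi_{L/K'}$ and their inverses $\psi_{L/K}=\varphi_{L/K}^{-1}$, $\psi_{L/K'}=\varphi_{L/K'}^{-1}$ are strictly increasing, piecewise linear, and vanish at $0$. By the transitivity of Herbrand's function \cite[IV Remark 2]{CL} we have $\varphi_{L/K}=\varphi_{K'/K}\circ\varphi_{L/K'}$, hence
$$
\varphi_{K'/K}=\varphi_{L/K}\circ\psi_{L/K'} \ .
$$
As a composite of strictly increasing piecewise linear functions, $\varphi_{K'/K}$ is strictly increasing and piecewise linear, which already gives part of (1). (Implicit here is the independence of $\varphi_{K'/K}$ from the auxiliary closure $L$, which follows from the same transitivity.)

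Next I would compute the slopes of $\varphi_{K'/K}$. Write $G=\Gal(L/K)$, $H=\Gal(L/K')$, and let $G_v$, $H_v$ denote the lower-numbering ramification subgroups, measured by the valuation $v_L$ so that the two filtrations share the same variable $v$. Differentiating the displayed relation and using the integral formulas $\varphi_{L/K}'(v)=(G_0:G_v)^{-1}$ and $\psi_{L/K'}'(r)=(H_0:H_{\psi_{L/K'}(r)})$ from \cite[IV \textsection 3]{CL}, one gets, away from the finitely many breakpoints and with $v:=\psi_{L/K'}(r)$,
$$
\varphi_{K'/K}'(r)=\varphi_{L/K}'(v)\cdot\psi_{L/K'}'(r)=\frac{(H_0:H_v)}{(G_0:G_v)} \ .
$$
The compatibility of the lower-numbering filtration with subgroups \cite[IV Proposition 2]{CL} gives $H_v=H\cap G_v$ and $H_0=H\cap G_0$, and substituting these together with the second isomorphism theorem rewrites this slope as $|G_vH/H|\big/|G_0H/H|$.

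Since $G_v\subseteq G_0$ forces $G_vH/H\subseteq G_0H/H$, the slope is $\leq 1$, so integrating from $\varphi_{K'/K}(0)=0$ yields $\varphi_{K'/K}(r)\leq r$, which is (2). Moreover, as $r$ increases the lower index $v=\psi_{L/K'}(r)$ increases, so $G_v$ shrinks, whence $G_vH/H$ shrinks and the slope $|G_vH/H|/|G_0H/H|$ is non-increasing in $r$; this is precisely the concavity of $\varphi_{K'/K}$ and finishes (1). I expect the main obstacle to be the bookkeeping between lower and upper numbering — keeping straight which variable each of $\varphi_{L/K}'$ and $\psi_{L/K'}'$ is evaluated at — rather than any genuine difficulty; the one structural input doing the real work is the compatibility of lower numbering with subgroups, already invoked in the proof of \cref{equality_ramification_group}.
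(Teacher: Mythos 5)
Your proof is correct and takes essentially the same route as the paper's: pass to a Galois closure $L/K$, write $\varphi_{K'/K}=\varphi_{L/K}\circ\psi_{L/K'}$, compute the generic slope by the chain rule, and use the compatibility $H_v=H\cap G_v$ of lower numbering with subgroups to identify the slope with $|G_vH/H|\,/\,|G_0H/H|$ (the paper writes this as a ratio of images in $G/H$, which is the same by the second isomorphism theorem), whence concavity from the decreasingness of the filtration. The only cosmetic difference is in (2): you bound every slope by $1$ and integrate from $\varphi_{K'/K}(0)=0$, while the paper bounds only the right derivative at $0$ and invokes concavity --- the same computation either way.
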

\begin{proof}
Let $L/K$ be the Galois closure of $K'$ in $K^{\sep}$.
Put $G:=\Gal(L/K)$ and $H:=\Gal(L/K')\subset G$.
By definition, we have 
$$
\varphi_{K'/K}= \varphi_{L/K}\circ  \psi_{L/K'} \ . 
$$ 
Hence, $\varphi_{K'/K}$ is strictly  increasing and piecewise linear as composition of strictly increasing and piecewise linear functions.
To prove that $\varphi_{K'/K}$ is concave, it is thus enough to prove that the restriction of $\varphi_{K'/K}$ to its affine locus has decreasing derivative.
For $r\geq 0$ sufficiently generic, we have 
\begin{align*}
\varphi_{K'/K}'(r)&  = \psi_{L/K'}'(r) \varphi_{L/K}'(\psi_{L/K'}(r)) & \\
                          & =\frac{|H_0|}{|H_{\psi_{L/K'}(r)}|}\frac{|G_{\psi_{L/K'}(r)}|}{|G_0|} & \text{ \cite[IV Proposition 12]{CL}  }\\
                          & =\frac{|\Ima(G_{\psi_{L/K'}(r)}\to G/H)|}{|\Ima(G_{0}\to G/H)|}   &                  
\end{align*}
where the last equality follows from $H_u=H\cap G_u$ for every $u\geq 0$.
Since the ramification filtration  is decreasing, we deduce that so is $\varphi_{K'/K}'$.
This proves (1).
We now prove (2). 
By loc. cit., we have   $\psi_{L/K'}(0)= \varphi_{L/K}(0)=0$, so that $\varphi_{K'/K}(0)=0$.
Since $\varphi_{K'/K}$  is piecewise linear  and concave,  we are thus left to show that the right derivative of $\varphi_{K'/K}$ at 0 is smaller than 1.
This follows immediately from the above formula for $\varphi_{K'/K}'(r)$ with $r$ generic.
\end{proof}

\begin{cor}\label{slope_induction_final}
In the setting of  \cref{local_fields},  assume that the residue field $F$ is perfect.
Let $L/K$ be a finite  extension  in $\overline{K}$.
Let $M$ be a finitely generated $\Lambda$-module with continuous $G_{L}$-action.
Then,
$$
\lc_K(\CoInd^{G_{L}}_{G_{K}}(M) )\leq \max(\lc_{L/K},\lc_L(M))   \ .
$$
\end{cor}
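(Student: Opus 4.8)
The plan is to reduce the statement to \cref{slope_induction} by a short case analysis on the size of the largest logarithmic slope of the coinduction. Write $N := \CoInd^{G_L}_{G_K}(M)$ and let $s := \lc_K(N)$ be its largest logarithmic slope, which is attained because $N \simeq \Fun(G_{K'}\backslash G_K, M)$ is finitely generated over $\Lambda$ (here $K'/K$ is the separable closure of $K$ in $L$, as in \cref{slope_induction}). If $s \leq \lc_{L/K}$, then $s \leq \max(\lc_{L/K}, \lc_L(M))$ holds trivially and there is nothing to prove, so I would assume $s > \lc_{L/K}$ for the remainder.

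In that case $s$ is a logarithmic slope of $N$ satisfying $s > \lc_{L/K}$, so \cref{slope_induction} applies directly and tells us that $\psi_{K'/K}(s)$ is a logarithmic slope of $M$. Since $\lc_L(M)$ is by definition the largest logarithmic slope of $M$ for the $G_L$-action, this immediately gives $\psi_{K'/K}(s) \leq \lc_L(M)$.

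The one remaining step is to upgrade this bound on $\psi_{K'/K}(s)$ into a bound on $s$ itself, which is exactly where \cref{phi_inequality}(2) enters. Since $\varphi_{K'/K}$ and $\psi_{K'/K}$ are mutually inverse strictly increasing functions, the inequality $\varphi_{K'/K}(r) \leq r$ valid for all $r \geq 0$ rewrites, upon setting $t = \varphi_{K'/K}(r)$, as $\psi_{K'/K}(t) \geq t$ for all $t \geq 0$. Applying this with $t = s$ yields $s \leq \psi_{K'/K}(s) \leq \lc_L(M) \leq \max(\lc_{L/K}, \lc_L(M))$, which is the desired inequality.

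I do not anticipate a serious obstacle: the genuine content is already packaged in \cref{slope_induction}, and the whole argument is a bookkeeping combination of that equivalence with the Herbrand estimate. The only point requiring care is the direction of the Herbrand inequality, namely that $\varphi_{K'/K} \leq \id$ on $\mathds{Q}_{\geq 0}$ is equivalent to $\psi_{K'/K} \geq \id$; getting this backwards would reverse the bound. It is also worth noting the harmless boundary case where $N$ has no positive logarithmic slope (e.g. $M = 0$), in which $s = 0$ and the inequality is automatic.
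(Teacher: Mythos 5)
Your proof is correct and takes essentially the same route as the paper: the nontrivial case $s>\lc_{L/K}$ is handled by \cref{slope_induction} combined with \cref{phi_inequality}-(2), the only cosmetic difference being that you rewrite $\varphi_{K'/K}\leq \id$ as $\psi_{K'/K}\geq \id$ and apply it at $s$, whereas the paper applies the increasing function $\varphi_{K'/K}$ to $\psi_{K'/K}(r)\leq \lc_L(M)$ to get $r\leq \varphi_{K'/K}(\lc_L(M))\leq \lc_L(M)$. Your explicit dispatch of the trivial case $s\leq \lc_{L/K}$ is the same reduction the paper leaves implicit.
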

\begin{proof}
Put $r= \lc_K(\CoInd^{G_{L}}_{G_{K}}(M) )$.
If $r> \lc_{L/K},$  \cref{slope_induction}  implies that $\psi_{K'/K}(r)$  is a logarithmic slope of $M$.
In particular $\psi_{K'/K}(r)\leq \lc_L(M)$.
Thus,
$$
r\leq \varphi_{K'/K}(\lc_L(M)) \leq  \lc_L(M)
$$
where the second inequality follows from  \cref{phi_inequality}-(2).
\end{proof}

\section{Conductor divisors}\label{semi_continuity_conductors}\label{semi_continuity_section}
Let $X$ be a normal scheme of finite type over $k$.
%, let $D$ be a  reduced divisor of $X$ and put $U:=X-D$.
%Let $D$ be a divisor and put $j : U:=X-D\hookrightarrow X$. 
Let $Z$ be an integral Weil divisor and let $\eta\in Z$ be its generic point.
Let $K$ be the fraction field of $\hat{\mathcal{O}}_{X,\eta}$ and fix a separable closure $K^{\sep}$ of $K$.
For  $\cF\in \Cons_{tf}(X,\Lambda)$, the pull-back $\cF|_{\Spec K}$ is a $\Lambda$-module of finite type with continuous $G_{K}$-action.
Using the notations from \cref{notation_logconductor}, we put
$$
c_{Z}(\cF):= c_{K}(\cF|_{\Spec K})  \text{ and }
 \lc_{Z}(\cF):= \lc_{K}(\cF|_{\Spec K})  \ .
$$

\begin{definition}\label{conductor_divisor_def}
Let $X$ be a normal scheme of finite type over $k$ and let $\cF\in \Cons_{tf}(X,\Lambda)$.
We define the \textit{conductor divisor  of $\cF$} as the Weil divisor with rational coefficients given by
$$
C_X(\cF):=\sum_{Z}  c_{Z}(\cF) \cdot  Z
$$
and the \textit{logarithmic conductor divisor  of  $\cF$} as the Weil divisor with rational coefficients given by
$$
LC_X(\cF):=\sum_{Z} \lc_{Z}(\cF) \cdot  Z
$$
where the sums run over the set of integral Weil divisors of $X$.
\end{definition}

\begin{rem}
When there is no ambiguity, we will drop the subscript  $X$ in $C_X(\cF)$ and $LC_X(\cF)$.
\end{rem}

\begin{rem}\label{inequalityLogNonLog_divisor_rem} \cref{inequalityLogNonLog} implies 
$$
LC_X(\cF)\leq C_X(\cF) \leq LC_X(\cF) + D \ .
$$
where $D$ is the support of $C_X(\sF)$.
\end{rem}

\begin{definition}
In the setting of \cref{semi_continuity_section}, we define the \textit{generic conductor} and the \textit{generic (logarithmic) conductor} of $\cF$ along a divisor $D$ respectively by
$$
c_D(\cF):= \max_{Z} c_Z(\cF) \text{ and }
 \lc_D(\cF):= \max_{Z}  \lc_Z(\cF)   
$$
where $Z$ runs over the set of irreducible components of $D$.
\end{definition}

 The conductor and log conductor divisors enjoy the following semi-continuity property :

\begin{theorem}[{\cite[Theorem 1.4,1.5]{Hu_Leal}}]\label{semi_continuity}
Let $f : Y\to X$ be a morphism of smooth schemes of finite type over $k$.
Let $D$ be an effective Cartier divisor on $X$ and put $U:=X- D$.
Assume that $E:=Y\times_X D$ is an effective  Cartier divisor on $Y$ and that $\Lambda$ is a finite field. 
For every  $\cL\in \Loc_{tf}(U,\Lambda)$, we have
$$
C_Y( (j_!\cL)|_Y) \leq f^* C_X( j_!\cL) \ .    
$$
If furthermore $D$ has normal crossings,  we have
$$
 LC_Y((j_!\cL)|_Y) \leq f^* LC_X(j_!\cL)     \ .
$$
\end{theorem}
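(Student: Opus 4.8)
The plan is to check the inequality of Weil divisors component by component, reducing to a purely local comparison of Abbes--Saito conductors at the codimension-one points of $Y$ lying over $D$. Both sides are supported on $E$, so I fix an integral component $W$ of $E$ with generic point $\eta_W$ and set $\cG := (j_!\cL)|_Y$. Since $Y$ is smooth, $\sO_{Y,\eta_W}$ is a discrete valuation ring; let $K_W$ be the fraction field of its completion $\wh{\sO}_{Y,\eta_W}$, so that the multiplicity of $C_Y(\cG)$ along $W$ is $c_{K_W}(\cG|_{\Spec K_W})$. On the other side, $f(W)$ lies in finitely many components $Z$ of $D$, and the multiplicity of $f^*C_X(j_!\cL)$ along $W$ is $\sum_Z c_Z(j_!\cL)\cdot e_{W,Z}$, where $e_{W,Z} = \mathrm{ord}_W(f^\# t_Z)$ is the order of vanishing along $W$ of a local equation $t_Z$ of $Z$; this $e_{W,Z}$ is the ramification index of the induced extension $K_Z \hookrightarrow K_W$ when $W$ dominates $Z$.

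First I would treat the case where $W$ dominates a component $Z$ of $D$, so that $f(\eta_W)$ is the generic point of $Z$ and $K_Z \hookrightarrow K_W$ is a morphism of henselian discrete valuation fields with ramification index $e := e_{W,Z}$. Here $\cG|_{\Spec K_W}$ is the restriction of $\cL|_{\Spec K_Z}$ along $G_{K_W}\to G_{K_Z}$, and the heart of the matter is how the largest slope transforms under this restriction. When the residue fields are perfect and $K_W/K_Z$ is finite separable, it transforms by Herbrand's function, $c_{K_W}(\cG|_{\Spec K_W}) = \psi_{K_W/K_Z}(c_Z(j_!\cL))$, and the estimate $\psi_{K_W/K_Z}(\lambda)\le e\cdot\lambda$ --- equivalent to the lower bound $\varphi_{K_W/K_Z}'\ge 1/e$, obtained by the same computation as in the proof of \cref{phi_inequality} --- yields exactly $c_{K_W}(\cG|_{\Spec K_W})\le e\cdot c_Z(j_!\cL)$. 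To reach this perfect-residue-field situation I would restrict to curves: a sufficiently general curve $C'$ meeting $W$ transversally at a general closed point computes the generic conductor $c_W(\cG)$, its image $C := f(C')$ is a curve on $X$ meeting $Z$ transversally at a general point of $Z$ computing $c_Z(j_!\cL)$, and $C'\to C$ is a finite morphism of curves over $k$ to which the perfect-residue-field estimate applies at the relevant closed points.

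The main obstacle is the case where $W$ is contracted, that is where $\overline{f(W)}$ has codimension $\ge 2$ in $X$ --- the typical situation being the exceptional divisor of a blow-up, which is precisely what makes the statement interesting. Then $f(\eta_W)$ is a special point of the components $Z$ it meets, the extension $K_Z\hookrightarrow K_W$ is no longer finite, and the residue field of $K_W$ has larger transcendence degree, so the classical Herbrand formalism does not apply and one must work with Abbes--Saito's ramification theory for imperfect residue fields. I expect this to be the crux: the bound must absorb the possibly large multiplicities $e_{W,Z}$ against the jump of the conductor of $\cL$ at the special points of $Z$, and controlling this requires the full generic semi-continuity of the conductor along $D$. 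I would attempt to reduce it to the dominant case by d\'evissage, factoring $f$ through blow-ups and using compatibility of the conductor with proper modifications, so that the contracted components are analysed by induction on the dimension of $\overline{f(W)}$.

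Finally, for the logarithmic inequality the normal crossing hypothesis on $D$ is essential: it ensures that the logarithmic structure defined by $D$ pulls back compatibly, that the logarithmic ramification theory is well behaved along the components of $E$, and that a general transversal curve still detects the generic logarithmic ramification along each component of $D$. Granting these inputs, the same three-step scheme --- localisation, reduction to curves, Herbrand estimate --- applies verbatim with the logarithmic slope decomposition and the logarithmic Herbrand function of \cref{equality_ramification_group} in place of their non-logarithmic analogues, giving $LC_Y(\cG)\le f^*LC_X(j_!\cL)$. As above, the delicate point is the curve-reduction step, now for the logarithmic conductor, and the normal crossing condition is exactly what makes that reduction legitimate.
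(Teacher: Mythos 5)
First, note that the paper does not prove this statement at all: it is quoted verbatim from \cite[Theorem 1.4, 1.5]{Hu_Leal}, whose proof relies on the full strength of Abbes--Saito ramification theory for imperfect residue fields. So your attempt has to be judged on its own merits, and it has genuine gaps in both of its cases. In the dominant case, your reduction is circular where it matters. If $\dim Y>\dim X$, a component $W$ of $E$ dominating $Z$ has transcendental residue extension $k(Z)\subset k(W)$, so $K_W/K_Z$ is not finite and no Herbrand function $\psi_{K_W/K_Z}$ exists; your fix via curves then requires the image curve $C=f(C')$ to compute $c_Z(j_!\cL)$, but a curve chosen generically on $Y$ to satisfy the hypotheses of \cref{equality_DT} for $W$ need not map to a curve on $X$ meeting $D$ transversally at a point of the good locus $\Omega$, nor satisfying the singular-support condition there: $C$ may be tangent to $D$ or pass through a point where the conductor jumps. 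For such a curve, the bound you need, namely $c_x((j_!\cL)|_C)\le m_x(C\cdot D)\cdot c_Z(j_!\cL)$, is exactly the curve case of the theorem you are proving (in the logarithmic setting it is precisely the quoted \cref{Hu_generic_swan_via_curve}, itself from \cite{Hu_Leal}). So either you smuggle in Hu--Leal's curve-level semicontinuity as an input, or the dominant case does not close. (The claimed identity $c_{K_W}=\psi_{K_W/K_Z}(c_{K_Z})$ is also false below the conductor of the extension, though since only an upper bound is needed this is a minor point; the inequality $\psi(\lambda)\le e\lambda$ and its logarithmic analogue are fine.)

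The larger gap is the contracted case, which you correctly identify as the crux and then do not prove. The proposal to ``factor $f$ through blow-ups and use compatibility of the conductor with proper modifications'' is circular: that compatibility \emph{is} the statement of the theorem for the blow-up morphism, and nothing in your sketch explains how the generic conductor along the exceptional divisor is absorbed by $e_{W,Z}$ times the \emph{generic} conductor along $Z$ when the conductor of $\cL$ restricted to curves jumps precisely at the center being blown up. Moreover, in characteristic $p$ there is no factorization theorem allowing you to write an arbitrary morphism of smooth varieties (or even a birational one) as a composition of blow-ups to run your induction on $\dim\overline{f(W)}$, and no induction hypothesis in your scheme controls a single blow-up anyway. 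This is exactly the hard analytic content of \cite{Hu_Leal}, which is established there by Abbes--Saito theory (and, for the logarithmic inequality, by the theory along simple normal crossing divisors --- the role of the NC hypothesis is to make the logarithmic filtration usable, not merely to make transversal curves detect the generic log conductor, as you suggest). In short: your outline reproduces the easy reductions, but both the dominant and the contracted cases ultimately appeal, implicitly or circularly, to the very semicontinuity results being proved.
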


The conductor divisor can be detected by curves, due to the following 

\begin{prop}[{\cite[Proposition 2.22]{wr},\cite[Proposition 3.3]{HuTeyssier_Leal}}]\label{equality_DT}
Let $X$ be a smooth scheme over $k$ of pure dimension $n$. 
Let $D$ be an effective Cartier divisor on $X$ and put $j : U:=X- D\hookrightarrow X$. 
Let $\cL\in \LC_{tf}(U,\Lambda)$.
Then, there is a closed conical subset $SS \subset \bT^*X$ of pure dimension $n$  and a dense open subset $\Omega \subset D$ such that for every immersion $i:S\hookrightarrow X$  over $k$ where $S$ is a smooth curve satisfying
\begin{enumerate}\itemsep=0.2cm
\item  $S$ meets $D$ transversely at a single smooth point $x\in \Omega$,
\item every non zero $\omega \in SS_x$ does not vanish on $\bT_xS\subset  \bT_x X$,
\end{enumerate}  
we have $C_{S}( (j_!\cL)|_{S}) = i^* C_X(j_!\cL)$.
%$$
\end{prop}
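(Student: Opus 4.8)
The plan is to set $SS:=SS(j_!\cL)$, the singular support of $j_!\cL$ in the sense of Beilinson: a closed conical subset of $\bT^*X$ which, by Beilinson's theorem, has pure dimension $n$. I will prove the equality over a suitable dense open $\Omega\subset D$ by separately establishing the inequalities $\leq$ and $\geq$. First, some reductions. Since the formation of slopes, hence of conductors, commutes with a change of coefficient ring by \cref{slope_tensor}, replacing $\Lambda$ by its residue field and $\cL$ by $\cL\otimes_{\Lambda}\Lambda/\fm_\Lambda$ changes neither $C_X(j_!\cL)$ nor $C_S((j_!\cL)|_S)$, so I may assume $\Lambda$ is a finite field. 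As $C_X(j_!\cL)$ is supported on $D$, I take $\Omega$ to be a dense open subset of the smooth locus $D_{\sm}$ meeting each irreducible component $Z$ of $D$ in a dense open of $Z$ and disjoint from $\Sing(D)$ and from the pairwise intersections of the components. For $x\in\Omega\cap Z$ and $i:S\hookrightarrow X$ satisfying (1), the hypothesis $S\cap D=\{x\}$ together with transversality gives $i^*[Z]=[x]$ with multiplicity one and $i^*[Z']=0$ for the other components $Z'$, so that $i^*C_X(j_!\cL)=c_Z(j_!\cL)\cdot[x]$ while $C_S((j_!\cL)|_S)=c_x(\cL|_{S\cap U})\cdot[x]$. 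The proposition therefore reduces to the local equality $c_x(\cL|_{S\cap U})=c_Z(j_!\cL)$.

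The inequality $c_x(\cL|_{S\cap U})\leq c_Z(j_!\cL)$ follows from semi-continuity and uses only condition (1). Indeed, since $S$ meets $D$ transversely at the single smooth point $x$, the scheme $S\times_X D=\{x\}$ is an effective Cartier divisor on the smooth curve $S$, so \cref{semi_continuity} applies with $f=i$ and $Y=S$ and yields, in its non-logarithmic form, $C_S((j_!\cL)|_S)\leq i^*C_X(j_!\cL)$.

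The reverse inequality $c_x(\cL|_{S\cap U})\geq c_Z(j_!\cL)$ is the crux, and this is the only place condition (2) is used. The number $c_Z(j_!\cL)$ is the largest non-logarithmic slope of $\cL$ at the generic point $\eta$ of $Z$, computed over the local field $K$, the fraction field of $\hat{\cO}_{X,\eta}$; after shrinking $\Omega$ I may assume this top slope is constant along $Z$. Over $\Omega$ the characteristic form attached by Saito to the top-slope part of $\cL|_{\Spec K}$ contributes, alongside the conormal direction of $Z$, a family of nonzero covectors to the fibres of $SS$ — this is Saito's description of the singular support through wild ramification \cite{wr}. Condition (2) says precisely that none of these covectors vanishes on $\bT_xS$, i.e. that $S$ is non-characteristic for $SS$ at $x$; for such a curve the pullback of the characteristic form to $S$ is nonzero, so $\cL|_{S\cap U}$ retains the top slope $c_Z(j_!\cL)$ and $c_x(\cL|_{S\cap U})\geq c_Z(j_!\cL)$. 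Together with the previous paragraph this proves the equality.

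The main obstacle is this last step: one must know both that the top-slope covectors of $\cL$ along $Z$ are recorded in $SS(j_!\cL)$ over $\Omega$, and that the non-characteristic condition (2) forces the restricted Galois representation to keep that slope rather than merely bounding it above. This is exactly the content of Saito's construction of the singular support via the cotangent bundle, together with the non-logarithmic refined Swan conductor of Abbes--Saito. The remaining ingredients — pure dimensionality of $SS$, the generic constancy of slopes along $Z$, and the reduction to a finite coefficient field — are routine.
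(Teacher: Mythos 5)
Your proposal matches the paper's own treatment: the paper does not prove \cref{equality_DT} internally, but imports it from \cite[Proposition 2.22]{wr} and \cite[Proposition 3.3]{HuTeyssier_Leal} for finite field coefficients and then extends to finite local rings precisely by your opening reduction via \cref{slope_tensor} (this is the content of the remark following \cref{inequality_LC}). Your sketch of the finite-field core --- the inequality $\leq$ from \cref{semi_continuity} using only condition (1), and the inequality $\geq$ from Saito's characteristic-form description of the singular support using condition (2) --- is a faithful summary of the cited proofs rather than an independent argument, so your write-up rests on exactly the same external results the paper invokes, and is correct at that level.
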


\begin{theorem}[{\cite[Theorem 1.5]{Hu_Leal}}]\label{Hu_generic_swan_via_curve}
Let $X$ be a smooth scheme over $k$. 
Let $D$ be a smooth irreducible divisor on $X$ and put $j : U:=X- D\hookrightarrow X$. 
For every $\cL\in \Loc_{tf}(U,\Lambda)$, we have
$$
\lc_D(j_! \cL) = \sup_{\cI(X,D)} \frac{\lc_x(j_!\cL)}{m_x(f^*D)}
$$
where $\cI(X,D)$ is the set of triples $(S,f : S\to X,x)$ where $f : S\to X$ is an immersion from a smooth curve over $k$ to $X$ such that $x= S\cap D$ is a closed point of $X$.
\end{theorem}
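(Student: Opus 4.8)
The plan is to prove the asserted equality by establishing the two inequalities separately, after two harmless reductions. Since the largest logarithmic slope is insensitive to the base change $\Lambda\to\Lambda/\fm$ by \cref{slope_tensor}, and since both sides of the equality are read off from logarithmic slopes, I would first reduce to the case where $\Lambda$ is a finite field, so that \cref{semi_continuity} becomes available. Because $\cL$ is locally constant on $U=X-D$, the divisor $D$ is the only one along which $j_!\cL$ ramifies, hence $LC_X(j_!\cL)=\lc_D(j_!\cL)\cdot D$; this clean form of the conductor divisor is exactly what makes the comparison with curves tractable.

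For the inequality $\geq$, fix $(S,f:S\to X,x)\in\cI(X,D)$. As $S$ is a smooth curve and $f(S)\nsubseteq D$, the pullback $E:=f^{-1}(D)=S\times_X D$ is an effective Cartier divisor on $S$ supported at $x$, so the logarithmic form of \cref{semi_continuity} applies to the morphism $f$ of smooth schemes ($D$ being smooth irreducible is in particular a normal crossings divisor) and gives
$$
LC_S\big((j_!\cL)|_S\big)\leq f^*LC_X(j_!\cL)=\lc_D(j_!\cL)\cdot f^*D \ .
$$
Comparing multiplicities at $x$ yields $\lc_x(j_!\cL)\leq \lc_D(j_!\cL)\cdot m_x(f^*D)$, i.e. $\lc_x(j_!\cL)/m_x(f^*D)\leq \lc_D(j_!\cL)$, and taking the supremum over $\cI(X,D)$ settles one inequality.

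The reverse inequality $\leq$ is the substantial part and forces one to go beyond transverse curves. Indeed, at a closed point the residue field is a finite extension of $k$, hence perfect, so \cref{inequalityLogNonLog} gives $\lc_x=c_x-1$ on the curve; for a generic transverse curve \cref{equality_DT} computes $c_x=c_D(j_!\cL)$, so such curves realize only the ratio $c_D(j_!\cL)-1$, which at the generic point of $D$ may be strictly smaller than $\lc_D(j_!\cL)$ since the residue field $F=k(D)$ there is imperfect and one only controls $c_D-1\leq \lc_D\leq c_D$. The remedy is to use curves tangent to $D$ to high order. Working in a local chart where $D=\{t=0\}$, I would take, through a suitably generic closed point $x\in D$, a family of curves of the shape $\{t=(\text{unit})\cdot s^m\}$ meeting $D$ only at $x$ with $m_x(f^*D)=m$, and prove that the local logarithmic conductor of the restriction grows as
$$
\lc_x(j_!\cL)=m\cdot \lc_D(j_!\cL)+O(1),
$$
so that $\lc_x/m\to \lc_D(j_!\cL)$ and the supremum attains $\lc_D(j_!\cL)$. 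This asymptotic expresses that the logarithmic conductor is the leading coefficient governing the conductor along tangent slices; it is exactly the local computation on curves carried out in \cite{Teyladic}, which reduces the higher-dimensional logarithmic slope at the generic point of $D$ to a limit of Swan conductors of one-variable restrictions.

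The main obstacle is precisely this asymptotic in the imperfect residue field case: unlike the non-logarithmic conductor, $\lc_D$ is not detected by any single transverse curve, and one must (i) establish the linear growth $m\cdot\lc_D+O(1)$ uniformly in $m$, controlling the error term arising from reductions modulo $\wp$ and from the tame correction, and (ii) transfer the computation from the generic point of $D$, where $\lc_D$ is defined, down to a dense set of closed points $x\in D$ through which the tangent curves are actually built, using constructibility of the slopes together with the semi-continuity of \cref{semi_continuity}. Once these are secured, combining them with the already proved inequality $\geq$ yields the equality of \cref{Hu_generic_swan_via_curve}.
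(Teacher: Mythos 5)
You should first be aware that the paper does not prove this statement at all: it is imported verbatim as \cite[Theorem 1.5]{Hu_Leal}, and the only argument the paper supplies is the remark following \cref{inequality_LC}, namely that the finite-field case from the literature extends to finite local rings $\Lambda$ via \cref{slope_tensor}. Your opening reduction to a finite field reproduces exactly that remark, and your diagnosis of the difficulty is accurate and well-informed: $\lc_D$ lives at the generic point of $D$, where the residue field is imperfect, so transverse curves only detect $c_D(j_!\cL)-1$ (via \cref{equality_DT} and \cref{inequalityLogNonLog} at closed points), which can be strictly smaller than $\lc_D(j_!\cL)$, and one is forced to use curves tangent to $D$ to high order. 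But diagnosing the obstacle is not the same as overcoming it, and that is where your proposal has a genuine gap: the inequality $\lc_D(j_!\cL)\leq\sup$ is the entire content of the theorem, and you never prove it. The asymptotic
$$
\lc_x(j_!\cL)=m\cdot\lc_D(j_!\cL)+O(1)
$$
along curves of contact order $m$ is asserted, with your own items (i) and (ii) — uniform linear growth with controlled error, and the transfer from the generic point of $D$ to a dense set of closed points — explicitly left open. You defer them to \cite{Teyladic}, but that unpublished note is used in this paper only for the Herbrand-function statements (\cref{equality_ramification_group} and its corollaries), not for this result; the actual proof lives in \cite{Hu_Leal}, which you would in effect be citing for the theorem you set out to prove.

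There is a second, structural problem with reading your text as a standalone proof: your ``easy'' direction $\lc_D(j_!\cL)\geq\sup$ invokes the logarithmic half of \cref{semi_continuity}, which the paper cites as \cite[Theorems 1.4, 1.5]{Hu_Leal} — that is, the very theorem (or its immediate sibling) under proof. Within the paper this is unproblematic, since both statements are black-boxed imports from the same source, and indeed the paper's later arguments (e.g.\ \cref{FDILocal_irreducible_radicial_log}) freely combine \cref{semi_continuity} with \cref{Hu_generic_swan_via_curve}. But if your goal was an independent proof, the argument is circular in one direction and missing in the other. Your correct observations — that $LC_X(j_!\cL)=\lc_D(j_!\cL)\cdot D$ since $\cL$ is lisse off $D$, and that $E=f^{-1}(D)$ is Cartier for any curve in $\cI(X,D)$ so the logarithmic pullback inequality applies — would all be fine ingredients, but the heart of the matter remains unestablished.
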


%Let us recall the following definition from \cite[Tag 0391]{??}.
%
%\begin{definition}
%Let $f : X\to S$ be a morphism between varieties over $k$. 
%We say that $f : X\to S$ is \textit{normal} if $f$ is flat and has geometrically normal fibers.
%\end{definition}

%\begin{rem}\label{total_space_normality}
%In virtue of, \cite[Corollaire 6.5.4]{EGA4-2}  for a normal morphism $f : X\to S$ as above with $S$ normal,  the variety $X$ is normal.
%\end{rem}

\begin{prop}[{\cite[Corollary 5.8]{HuTeyssier_semi_continuity}}]\label{inequality_LC}
Let $f : X\to S$ be a smooth morphism between smooth schemes of finite type over $k$.
Let $D\subset X$ be an effective Cartier divisor  relative to $S$ such that $f|_D : D\to S$ is smooth.
Put $j : U := X-D\hookrightarrow X$.
Then for every $\cL\in \Loc_{tf}(U,\Lambda)$ and every algebraic geometric point $\sbar\to S$, we have
$$
C((j_!\cL)|_{X_{\sbar}}) \leq i_{\sbar}^* C(j_!\cL)  \text{ and }  LC((j_!\cL)|_{X_{\sbar}}) \leq i_{\sbar}^*  LC(j_!\cL) 
$$
where $i_{\sbar} : X_{\sbar} \to X$ is the canonical morphism.
\end{prop}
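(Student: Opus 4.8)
The plan is to derive the inequalities from the semi-continuity \cref{semi_continuity} applied to the inclusion $i_{\sbar}\colon X_{\sbar}\hookrightarrow X$ of the geometric fibre. Since $\cL$ is locally constant on $U$, the divisors $C_X(j_!\cL)$ and $LC_X(j_!\cL)$ are supported on $D$, so their pullbacks along $i_{\sbar}$ are well defined via the relative Cartier divisor $D$.

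First I would reduce to the case where $\Lambda$ is a finite field. Let $\Lambda':=\Lambda/\fm_{\Lambda}$ be the finite residue field and put $\cL':=\cL\otimes_{\Lambda}\Lambda'$. As the germs of $\cL$ are free $\Lambda$-modules of finite type, \cref{slope_tensor} shows that for every integral Weil divisor $Z$ the slopes and logarithmic slopes of $(j_!\cL)|_{\Spec K}$ and $(j_!\cL')|_{\Spec K}$ agree, $K$ being the fraction field of $\hat{\sO}_{X,\eta_Z}$. Hence $C_X(j_!\cL)=C_X(j_!\cL')$ and $LC_X(j_!\cL)=LC_X(j_!\cL')$, and similarly on $X_{\sbar}$; it therefore suffices to treat $\cL'$, i.e. to assume $\Lambda$ is a finite field.

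Next I would realise the geometric fibre as a rational fibre over a perfect base field. Let $s\in S$ be the image of $\sbar$ and $\Omega:=\ol{\kappa(s)}$, an algebraically closed — hence perfect — extension of $k$. Writing $f_{\Omega}\colon X_{\Omega}\to S_{\Omega}$ for the base change of $f$ along $k\to\Omega$, $\cL_{\Omega}$ for the pullback of $\cL$ and $j_{\Omega}$ for the corresponding open immersion, the point $\sbar$ defines an $\Omega$-point $\sigma$ of $S_{\Omega}$ whose fibre is $X_{\sbar}$. Over $\Omega$ the scheme $X_{\sbar}$ is smooth, the restriction $D_{\sbar}=X_{\sbar}\times_X D$ is an effective Cartier divisor because $D$ is Cartier relative to $S$, and $D_{\Omega}$ is smooth over $\Omega$ — as $f|_D$ is smooth — hence a normal crossings divisor, its components being smooth and pairwise disjoint. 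Applying \cref{semi_continuity} over $\Omega$ to $i_{\sigma}\colon X_{\sbar}\hookrightarrow X_{\Omega}$ yields
$$
C\big((j_!\cL)|_{X_{\sbar}}\big)\leq i_{\sigma}^{*}\,C_{X_{\Omega}}(j_{\Omega!}\cL_{\Omega})
\quad\text{and}\quad
LC\big((j_!\cL)|_{X_{\sbar}}\big)\leq i_{\sigma}^{*}\,LC_{X_{\Omega}}(j_{\Omega!}\cL_{\Omega}),
$$
where the left-hand sides are identified using $j_{\Omega!}\cL_{\Omega}=\pi^{*}(j_!\cL)$ for the projection $\pi\colon X_{\Omega}\to X$, together with $i_{\sigma}^{*}\pi^{*}=i_{\sbar}^{*}$.

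The remaining step, which I expect to be the main obstacle, is to identify the right-hand sides with $i_{\sbar}^{*}C_X(j_!\cL)$ and $i_{\sbar}^{*}LC_X(j_!\cL)$. By $i_{\sigma}^{*}\pi^{*}=i_{\sbar}^{*}$ this reduces to the compatibility of the conductor and logarithmic conductor divisors with the base field extension $k\to\Omega$, i.e. $C_{X_{\Omega}}(j_{\Omega!}\cL_{\Omega})=\pi^{*}C_X(j_!\cL)$ and likewise for $LC$. This is genuinely ramification-theoretic: enlarging $k$ to $\Omega$ may render a divisor $Z\subset X$ reducible and changes the residue field of the local ring at its generic point, so one must verify that the Abbes–Saito slopes are preserved along each generic point of the preimage of $Z$. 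This base-change compatibility is established in \cite{HuTeyssier_semi_continuity}.
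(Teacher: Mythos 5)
The paper contains no internal proof of \cref{inequality_LC}: it is imported wholesale from \cite[Corollary 5.8]{HuTeyssier_semi_continuity}, with the passage from finite fields to arbitrary finite local rings handled exactly as in your first step (the remark closing Section 3 invokes \cref{slope_tensor} for precisely this). So there is nothing internal to compare against, and your proposal must be judged as a reconstruction of the cited proof. As such, its skeleton is sound: the reduction to a finite field via \cref{slope_tensor} is legitimate because the germs of $\cL$ are free; the base change to $\Omega=\overline{\kappa(s)}$ correctly turns $\sbar$ into an $\Omega$-rational point $\sigma$ of $S_{\Omega}$ whose fibre is $X_{\sbar}$; the hypotheses of \cref{semi_continuity} over the perfect field $\Omega$ are all verified, including that $D_{\sbar}=X_{\sbar}\times_{X_{\Omega}}D_{\Omega}$ is Cartier (because $D$ is Cartier relative to $S$) and that $D_{\Omega}$ is a smooth, hence normal crossings, divisor, which is needed for the logarithmic inequality; and the identification $j_{\Omega!}\cL_{\Omega}\simeq \pi^{*}(j_!\cL)$ with $i_{\sigma}^{*}\pi^{*}=i_{\sbar}^{*}$ is correct since extension by zero commutes with arbitrary pullback.

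The one substantive debt is the step you flag yourself: the equalities $C_{X_{\Omega}}(j_{\Omega!}\cL_{\Omega})=\pi^{*}C_X(j_!\cL)$ and the logarithmic analogue. Nothing in the present paper supplies this --- \cref{flat_pullback_reg} concerns torsion divisors of coherent sheaves, not conductor divisors --- and it is genuinely nontrivial: at a generic point $\eta$ of a component $Z$ of $D$, a generic point $\eta'$ of $Z_{\Omega}$ induces an extension of complete discrete valuation fields with ramification index $1$ (as $Z_{\Omega}$ is reduced, $k$ being perfect) but with a non-algebraic, imperfect residue extension, so what is required is the invariance of the Abbes--Saito filtrations under extensions with $e=1$ and separable residue extension. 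This is available in the literature, but your bare citation of \cite{HuTeyssier_semi_continuity} for it carries a circularity risk: you are using the very paper whose Corollary 5.8 is the statement being proved, and if there the invariance is established only en route to, or by way of, that corollary, your argument reduces the statement to itself. To make the proof stand on its own, pin this input to an independent reference (e.g.\ the base-change compatibilities in \cite{RamImperfect} or in Saito's subsequent work on conductor divisors under extension of algebraically closed base fields). Modulo fixing that citation, your reduction is correct and is indeed in the spirit of how the original result is obtained.
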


\begin{rem}
\cref{semi_continuity},\cref{equality_DT}, \cref{Hu_generic_swan_via_curve} and \cref{inequality_LC} are proved for a finite field $\Lambda$ of characteristic $\ell\neq p$ in \cite{wr,Hu_Leal,HuTeyssier_semi_continuity}.
They hold for $\Lambda$  a finite local ring of residue characteristic $\ell \neq p$ as a consequence of \cref{slope_tensor}.
\end{rem}

\section{Torsion divisors of coherent sheaves}

\begin{construction}
Let $X$ be a normal noetherian scheme and let $\cE\in \Coh(X)$.
If $X^1\subset X $ denotes the set of codimension 1 points of $X$, we define a Weil divisor on $X$ by the formula
$$
T(\cE):= \sum_{\eta\in X^1}    \length_{\cO_{X,\eta}}( \cE|_{X_{\eta}}^{\tors})\cdot \overline{\{\eta\}}
$$
where $X_{\eta}= \Spec \cO_{X,\eta}$ and where $\cE|_{X_{\eta}}^{\tors}$  is the torsion part of $\cE|_{X_{\eta}}$.
We refer to $T(\cE)$ as the \textit{torsion divisor of $\cE$}. \\ \indent
We denote by 
$$
\bQ[\Coh(X)]
$$ 
the free $\bQ$-vector space on the set of isomorphism classes of coherent sheaves on $X$. \\ \indent
We denote by 
$$
\bQ_{\geq 0}[\Coh(X)]\subset \bQ[\Coh(X)]
$$
the sub-monoid formed by linear combinations with coefficients in $\bQ_{\geq 0}$.

If $\Weil(X)_{\bQ}$ is the space of $\bQ$-Weil divisors on $X$, the map $T : \Coh(X)\to \Weil(X)_{\bQ}$ induces a map of $\bQ$-vector spaces
$$
T : \bQ[\Coh(X)]\to \Weil(X)_{\bQ} \ .
$$
Furthermore, if $f : Y\to X$ is a morphism between normal noetherian schemes, the (non derived) pullback 
$f^* : \Coh(X) \to \Coh(Y)$ induces a map of $\bQ$-vector spaces
$$
f^* :\bQ[\Coh(X)] \to \bQ[\Coh(Y)] \ .
$$
\end{construction}

\begin{example}
If $D$ is an effective Cartier divisor of $X$, then $T(\cO_D)=D$.
\end{example}

\begin{lem}\label{RE_Cartier}
Let $X$ be a normal scheme and let $Z,Z'\subset X$ be effective Cartier divisors.
Then
$$
T(\cO_{Z+Z'})= T(\cO_Z) + T(\cO_{Z'})\ .
$$
\end{lem}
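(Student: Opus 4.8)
The statement is an additivity of the torsion divisor $T$ on sums of effective Cartier divisors. The plan is to reduce everything to a computation at each codimension $1$ point $\eta\in X^1$, since both sides of the claimed equality are Weil divisors and $T$ is defined componentwise by the length of the torsion of the localized sheaf. So I fix $\eta$ and work in the discrete valuation ring $A:=\cO_{X,\eta}$, whose fraction field I call $\kappa$ and whose maximal ideal has a uniformizer $t$. Because $X$ is normal, $A$ is a DVR, and this is the key simplification: the torsion submodule of any finitely generated $A$-module has a length that is easy to control.

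First I would describe the structure sheaves $\cO_Z$, $\cO_{Z'}$ and $\cO_{Z+Z'}$ after localizing at $\eta$. An effective Cartier divisor $Z$ is locally defined by a single nonzerodivisor, so $(\cO_Z)_\eta\simeq A/(a)$ for some $a\in A$, and likewise $(\cO_{Z'})_\eta\simeq A/(b)$. The sum $Z+Z'$ is defined by the product of the local equations, so $(\cO_{Z+Z'})_\eta\simeq A/(ab)$. Writing $a=u\,t^{m}$ and $b=v\,t^{n}$ with $u,v\in A^\times$, these modules are $A/(t^m)$, $A/(t^n)$ and $A/(t^{m+n})$ respectively (up to the units, which do not change the ideal). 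Each of these is already a torsion $A$-module, so its torsion part is itself, and its length over $A$ is the valuation of the defining element: $\length_A(A/(t^m))=m$, and similarly for the others.

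The core computation is then immediate: the multiplicity of $T(\cO_{Z+Z'})$ at $\eta$ is $m+n$, while the multiplicities of $T(\cO_Z)$ and $T(\cO_{Z'})$ at $\eta$ are $m$ and $n$. Hence the multiplicities add, and since this holds at every $\eta\in X^1$, the three Weil divisors satisfy $T(\cO_{Z+Z'})=T(\cO_Z)+T(\cO_{Z'})$.

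I do not expect a genuine obstacle here; the only point requiring a little care is the identification $(\cO_{Z+Z'})_\eta\simeq A/(ab)$, i.e.\ that the formation of the structure sheaf of the sum of effective Cartier divisors corresponds to the product of local equations. This is the defining compatibility of the additive structure on effective Cartier divisors with the divisor-to-ideal dictionary, and once granted the length additivity $\length_A(A/(ab))=\length_A(A/(a))+\length_A(A/(b))$ over a DVR follows from $v(ab)=v(a)+v(b)$. Everything else is bookkeeping over the finitely many $\eta$ where at least one of the divisors is supported.
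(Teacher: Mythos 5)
Your proof is correct and follows essentially the same route as the paper: both arguments rest on the identity $\cI_{Z+Z'}=\cI_Z\cdot\cI_{Z'}$ and conclude by localizing at a codimension $1$ point, where normality makes the local ring a DVR and additivity of lengths reduces to $v(ab)=v(a)+v(b)$. You have merely spelled out the length computation that the paper leaves implicit.
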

\begin{proof}
By definition, we have $\cI_{Z+Z'}= \cI_Z\cdot \cI_{Z'}$.
The conclusion thus follows by localizing at a codimension 1 point of $X$.
\end{proof}

\begin{lem}\label{devissage_E_0}
Let $X$ be a normal noetherian scheme and let 
$$
0\to \cE'\to \cE \to \cE''\to 0
$$
be an exact sequence in $\Coh(X)$.
Then, we have 
$$
T(\cE)\leq T(\cE')+ T(\cE'') \ .
$$
If furthermore $\cE$ is a torsion sheaf, we have 
$$
T(\cE)=T(\cE')+ T(\cE'') \ .
$$
\end{lem}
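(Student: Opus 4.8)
The plan is to reduce immediately to a local computation at each codimension $1$ point, where $X$ becomes the spectrum of a discrete valuation ring and the statement collapses to an elementary assertion about lengths of torsion modules. Indeed, by the definition of $T$ in the preceding construction, the multiplicity of $T(\cE)$ along an integral Weil divisor $Z=\ol{\{\eta\}}$ with $\eta\in X^1$ is exactly $\length_{\cO_{X,\eta}}(\cE|_{X_\eta}^{\tors})$. Since both the claimed inequality and the claimed equality are assertions between $\bQ$-Weil divisors, it suffices to verify them coefficient by coefficient, that is, for each fixed $\eta\in X^1$. Fix such an $\eta$ and set $R:=\cO_{X,\eta}$; as $X$ is normal and noetherian and $\eta$ has codimension $1$, the ring $R$ is a one-dimensional integrally closed noetherian local domain, i.e.\ a discrete valuation ring.

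Next I would localize the given short exact sequence. Because localization is exact and $\cE|_{X_\eta}$ corresponds to the stalk $\cE_\eta$ viewed as an $R$-module, the sequence $0\to\cE'\to\cE\to\cE''\to 0$ yields a short exact sequence of finitely generated $R$-modules
$$
0\longrightarrow M'\longrightarrow M\longrightarrow M''\longrightarrow 0,
$$
with $M'=\cE'_\eta$, $M=\cE_\eta$ and $M''=\cE''_\eta$. Over the discrete valuation ring $R$ every finitely generated torsion module has finite length, so all the quantities $\length_R((-)^{\tors})$ under consideration are finite, and the problem is now purely one about these three modules.

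The heart of the argument is the left-exactness of the torsion subfunctor. An element of the submodule $M'\subset M$ is torsion in $M$ if and only if it is torsion in $M'$, so $(M')^{\tors}=M'\cap M^{\tors}$; moreover the map $M\to M''$ carries torsion elements to torsion elements. Hence the sequence
$$
0\longrightarrow (M')^{\tors}\longrightarrow M^{\tors}\longrightarrow (M'')^{\tors}
$$
is exact, and the resulting monomorphism $M^{\tors}/(M')^{\tors}\hookrightarrow (M'')^{\tors}$, combined with additivity of length, gives
$$
\length_R(M^{\tors})=\length_R((M')^{\tors})+\length_R\bigl(M^{\tors}/(M')^{\tors}\bigr)\leq \length_R((M')^{\tors})+\length_R((M'')^{\tors}).
$$
Summing over $\eta\in X^1$ produces the desired inequality $T(\cE)\leq T(\cE')+T(\cE'')$.

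Finally, for the equality statement I would note that if $\cE$ is a torsion sheaf then $M$ is a torsion $R$-module, so that its submodule $M'$ and its quotient $M''$ are torsion as well; thus $(M')^{\tors}=M'$, $M^{\tors}=M$ and $(M'')^{\tors}=M''$, the sequence $0\to M'\to M\to M''\to 0$ consists of finite-length modules, and additivity of length forces the equality. I expect no real obstacle: the one conceptual point is that taking the torsion submodule is left- but not right-exact, which is precisely why one gets an inequality in general and an equality exactly when no free parts intervene, that is, when $\cE$ is torsion.
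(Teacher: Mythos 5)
Your proof is correct and follows essentially the same route as the paper, which disposes of the lemma in two lines by invoking exactness of localization, left-exactness of the torsion functor, and additivity of length; you have merely written out in full the local argument at a codimension-$1$ point that the paper leaves implicit. The details you supply (the exact sequence $0\to (M')^{\tors}\to M^{\tors}\to (M'')^{\tors}$ and the resulting length estimate over the discrete valuation ring $\cO_{X,\eta}$) are exactly what the paper's sketch intends.
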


\begin{proof}
The first claim follows from the fact that localization is exact and that the torsion functor is left-exact.
The last claim follows from the additivity of the  length function.
\end{proof}

\begin{recollection}\label{normal_morphism}
Let us recall that a morphism $f : X\to S$ between schemes of finite type over $k$ is \textit{normal} if it is flat with geometrically normal fibres. 
By \cite[Proposition 11.3.13]{EGA4-4}, the source of a normal morphism is normal if the target is normal.
By \cite[Proposition 2.3.4]{EGA4-2},  a generic point of $X$ is sent to a generic point of S.
In particular, if $X$ is irreducible the pull-back of $X$ along any dense open immersion $V\hookrightarrow S$ is irreducible as well.
\end{recollection}

\begin{notation}
For a scheme $X$ of finite type over $k$, a $\bQ$-Weil divisor $D$ of $X$ and an irreducible component $Z$ of $D$, we denote by $m_Z(D)\in \bQ$ the multiplicity of $D$ along $Z$. 
We let $m(D)$ be the maximal multiplicity of $D$ if $D\neq 0$ and put $m(D)=0$ if $D=0$.
\end{notation}

\begin{construction}\label{c_X/S}
Let $f: X\to S$ be a normal morphism between schemes of finite type over $k$.
For  $\cE\in \Coh(X)$,  we define a function $\chi_{\cE}\colon S\to\bN$ by
$$
\chi_{\cE}:S\to\bN,\ \ \ s\mapsto m(T(\cE|_{X_{\overline s}}))
$$
where $\overline s\to S$ is an algebraic geometric point above $s\in S$.
We put
$$
\mu_{f}(\cE):= \sup (\{0\} \cup \chi_{\cE}(S))\in \bN\cup \{\infty\} \ .
$$ 

%By \cref{same_cRE},  observe that 
%$$
%m_{X/S}(\cE)=\{ m(R(\cE_{s})), s\in S \}
%$$
%where $m(R(\cE_s))$ is the maximal multiplicity of $R(\cE)$.
\end{construction}

The following proposition is not used in the rest of the paper :

\begin{prop}\label{dominant_by_constructible_max_multi}
Let $f: X\to S$ be a normal morphism between schemes of finite type over $k$.
For every $\cE\in \Coh(X)$, the function $\chi_{\cE}\colon S\to\bN$  is constructible.
\end{prop}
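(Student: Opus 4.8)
The plan is to argue by Noetherian induction on $S$, reducing constructibility to a statement of generic constancy, and then to compute the generic value through Fitting ideals, which commute with arbitrary base change. Recall that a function $S\to \bN$ is constructible precisely when it takes finitely many values, each with constructible fibres; by Noetherian induction it suffices to prove that for every integral closed subscheme $T\subseteq S$ the restriction $\chi_{\cE}|_T$ is constant on a dense open subset of $T$. Since normal morphisms are stable under base change \cite{SP} and the geometric fibres of $X_T:=X\times_S T \to T$ over points of $T$ coincide with those of $f$, we have $\chi_{\cE|_{X_T}}=\chi_{\cE}|_T$. Thus we may assume $S$ integral with generic point $\eta$, and we must exhibit a dense open $V\subseteq S$ on which $\chi_{\cE}$ is constant.

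The key point is that torsion lengths are computed by Fitting ideals. On an integral normal scheme $Y$ with a coherent sheaf $\cF$ of generic rank $r$, and for a codimension $1$ point $\xi$ (so that $\cO_{Y,\xi}$ is a discrete valuation ring), the structure theorem over $\cO_{Y,\xi}$ together with the additivity of Fitting ideals under direct sums gives
\[
\length_{\cO_{Y,\xi}}\bigl(\cF^{\tors}_\xi\bigr)=\length_{\cO_{Y,\xi}}\bigl(\cO_{Y,\xi}/\mathrm{Fitt}_r(\cF)_\xi\bigr),
\]
i.e. the multiplicity of the torsion divisor at $\xi$ is the order of vanishing of $\mathrm{Fitt}_r(\cF)$ there. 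Applying this to the normal geometric fibres $Y=X_{\overline{s}}$ and using $\mathrm{Fitt}_r(\cE)|_{X_{\overline{s}}}=\mathrm{Fitt}_r(\cE|_{X_{\overline{s}}})$, the torsion divisor $T(\cE|_{X_{\overline{s}}})$ is cut out, on each component of generic rank $r$, by the fixed ideal sheaves $\mathrm{Fitt}_r(\cE)$ on $X$.

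It remains to understand how these divisors meet the fibres generically. After shrinking $S$, generic flatness \cite{EGA4-2} for $\cE$ together with the constructibility of the number of geometric components of the fibres and of the fibrewise rank of $\cE$ \cite{SP} lets us assume that the component structure of $X_{\overline{s}}$ and the generic rank of $\cE$ on each component are constant in $s\in V$ and agree with those of $X_{\overline{\eta}}$. Every codimension $1$ point of $X_{\overline{s}}$ carrying torsion is the generic point of a component of $(D)_{\overline{s}}$ for an integral codimension $1$ subscheme $D\subseteq X$ contained in some $V(\mathrm{Fitt}_r(\cE))$. Over a dense open of $S$ only the finitely many such $D$ that dominate $S$ contribute: the components of $V(\mathrm{Fitt}_r(\cE))$ not dominating $S$ lie over a proper closed subset, while the dominant components of codimension $\geq 2$ in $X$ have, generically, fibres of codimension $\geq 2$ in $X_{\overline{s}}$. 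For each such horizontal divisor $D$, write $\mathrm{Fitt}_r(\cE)=(\pi^{\mu_D})$ near the generic point $\delta$ of $D$ in $X$, where $\pi$ generates the reduced ideal of $D$ in the discrete valuation ring $\cO_{X,\delta}$. Restricting to a geometric fibre, the order of vanishing at the generic point $\xi$ of a component of $(D)_{\overline{s}}$ equals $\mu_D\cdot e_D$, where $e_D=\length_{\cO_{X_{\overline{s}},\xi}}(\cO_{(D)_{\overline{s}}})_\xi$ is the multiplicity of the geometric fibre of $D\to S$ along that component. After a further shrinking of $S$ this multiplicity is constant in $s$ and across the components, by generic flatness and the generic constancy of the geometric fibre multiplicities of $D\to S$ (a spreading out of the geometric generic fibre, using that $k$ is perfect). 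Hence for all $s\in V$ we obtain $\chi_{\cE}(s)=\max_D (\mu_D\cdot e_D)$, a constant, completing the induction.

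The main obstacle is precisely the constancy, in $s$, of the fibrewise multiplicities $e_D$: the order of vanishing of $\mathrm{Fitt}_r(\cE)$ at a horizontal divisor need not be preserved under restriction to the geometric fibres, since a local equation of $D$ may acquire inseparable multiplicity on $X_{\overline{s}}$ (as already for $V(y^p-t)$ in $\bA^2\to\bA^1$). What must be controlled is that this multiplicity, although possibly larger than $\mu_D$, is generically independent of $s$; this is where the normality of the geometric fibres, generic flatness, and the perfectness of $k$ enter.
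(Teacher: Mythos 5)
Your Fitting-ideal reduction is sound as far as it goes, and it is a genuinely different computational route from the paper's: where you encode torsion lengths by the order of vanishing of $\mathrm{Fitt}_r(\cE)$ (which indeed commutes with base change, so the fibrewise torsion divisors are cut out by ideals living on $X$), the paper instead splits off the torsion subsheaf and filters it by structure sheaves $\cO_{Z_i}$ of closed subschemes made into effective Cartier divisors relative to $S$ by generic flatness \cite[Théorème 6.9.1]{EGA4-2}. Both routes correctly reduce the proposition to the same crux, namely the generic constancy in $s$ of the multiplicities $e_D$ of the geometric fibres of a horizontal integral divisor $D\subset X$. But at exactly that point — which you yourself flag as the main obstacle — your proposal has a genuine gap: the constancy is asserted with only a parenthetical hint, and the hint is misdirected. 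Perfectness of $k$ cannot help here, because the inseparability responsible for $e_D>1$ occurs over the function field $k(S)$, which is imperfect as soon as $\dim S\geq 1$; in your own example $V(y^p-t)\subset\bA^2\to\bA^1_t$ the base field may be algebraically closed. Likewise, \enquote{spreading out the geometric generic fibre} cannot be performed over $S$ itself: the reduction of $D_{\overline{\eta}}$ and its component structure are only defined over a finite, possibly \emph{inseparable}, extension of $k(\eta)$.

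What is actually needed — and what constitutes the technical heart of the paper's proof, its step (vii) — is a quasi-finite dominant base change $g:S'\to S$, supplied by \cite[0550]{SP}, after which $(D\times_S S')^{\red}$ is flat over $S'$ with geometrically reduced generic fibre, followed by \cite[Theorem 9.7.7]{EGA4-3} to spread geometric reducedness over a dense open of $S'$; the inseparable multiplicity is thereby absorbed into coefficients computed on the total space (writing the pulled-back divisor as a $\bN$-combination of integral relative divisors with reduced geometric fibres, whose torsion divisors then have multiplicity $1$), and constancy descends to $S$ because $\chi_{h^*\cE}(s')=\chi_{\cE}(g(s'))$ with $g$ open. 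Until you supply this (or an equivalent) argument, your proposal proves only the bookkeeping, not the proposition. Two smaller repairs are also needed: you reduce only to $S$ integral, so $\cO_{X,\delta}$ at the generic point $\delta$ of a horizontal divisor need not be a discrete valuation ring — first shrink $S$ to its smooth locus, which is where perfectness of $k$ genuinely enters (step (iii) of the paper); and when the fibres $X_{\overline{s}}$ are reducible with components of differing generic ranks, a single $\mathrm{Fitt}_r(\cE)$ does not cut out the torsion divisor on the whole fibre, so your component-by-component rank bookkeeping must itself be made uniform in $s$ — a complication the paper sidesteps at the outset by reducing to the case where $\cE$ is a torsion sheaf, a reduction you could borrow verbatim.
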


\cref{dominant_by_constructible_max_multi} admits the following immediate 

\begin{cor}\label{bound_cRE}
Let $f: X\to S$ be a normal morphism between schemes of finite type over $k$.
For every $\cE\in \Coh(X)$,  the quantity $\mu_{f}(\cE)$ is finite.
\end{cor}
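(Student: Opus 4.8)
The plan is to deduce \cref{bound_cRE} directly from the constructibility of $\chi_{\cE}$ established in \cref{dominant_by_constructible_max_multi}, using the fact that $S$ is noetherian. First I would recall that $\mu_f(\cE)=\sup(\{0\}\cup \chi_{\cE}(S))$ is by definition the supremum of the values taken by the function $\chi_{\cE}\colon S\to \bN$, together with $0$. Since $\chi_{\cE}$ takes values in $\bN$, the finiteness of $\mu_f(\cE)$ amounts to showing that $\chi_{\cE}$ takes only finitely many values, or more precisely that its image is a bounded subset of $\bN$.

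The key step is to invoke constructibility. By \cref{dominant_by_constructible_max_multi}, $\chi_{\cE}$ is a constructible function on $S$; that is, there is a finite partition of $S$ into locally closed subschemes on each of which $\chi_{\cE}$ is constant. I would then use that $S$ is a scheme of finite type over the field $k$, hence noetherian, so that any such partition into constructible pieces is necessarily finite. Consequently the image $\chi_{\cE}(S)\subset \bN$ is a finite set. A finite subset of $\bN$ is bounded, so $\sup(\{0\}\cup \chi_{\cE}(S))$ is a finite natural number, which is exactly the assertion $\mu_f(\cE)\in \bN$.

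Since the corollary is stated as an immediate consequence of the preceding proposition, there is no genuine obstacle here: the only content is the passage from ``$\chi_{\cE}$ is constructible on a noetherian base'' to ``$\chi_{\cE}$ has finite image''. The one point deserving a word of care is that constructibility of a function is defined via a finite partition into constructible sets on which the function is constant, so the finiteness of the image is built into the definition once one knows $S$ is quasi-compact (which it is, being of finite type over $k$). I would therefore simply spell out this implication and conclude that $\mu_f(\cE)$ is finite.
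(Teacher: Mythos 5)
Your proof is correct and matches the paper's intent exactly: the paper states \cref{bound_cRE} as an immediate consequence of \cref{dominant_by_constructible_max_multi}, and the implicit argument is precisely the one you spell out — a constructible $\bN$-valued function on a noetherian (hence quasi-compact) scheme takes only finitely many values, so its supremum is finite.
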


\begin{lem}\label{bound_chi_reduction}
Let $f: X\to S$ be a normal morphism between integral schemes of finite type over $k$.
Let $Z$ be a closed subscheme of $X$ of codimension at least 2.
Then, $\chi_{\cE}$  and $\chi_{\cE|_{X-Z}}$  coincide on a dense open subset of $S$.
\end{lem}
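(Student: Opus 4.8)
The plan is to compare the two functions fibrewise and to show they agree wherever the fibre of $Z$ is small enough to be invisible to the torsion divisor. First I would record the setup: since $X-Z$ is open and dense in $X$, the restriction $X-Z\to S$ is again a normal morphism between integral schemes (flatness and geometric normality of the fibres are preserved by restricting the source to an open subscheme), so $\chi_{\cE|_{X-Z}}$ is defined. For a geometric point $\overline{s}\to S$, writing $Z_{\overline{s}} := Z\cap X_{\overline{s}}$, one has $(X-Z)_{\overline{s}} = X_{\overline{s}}-Z_{\overline{s}}$ and
$$
(\cE|_{X-Z})|_{(X-Z)_{\overline{s}}} = (\cE|_{X_{\overline{s}}})|_{X_{\overline{s}}-Z_{\overline{s}}} ,
$$
since non-derived pullback and restriction to an open commute.

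The key reduction is the following claim: there is a dense open $U\subseteq S$ such that for every $s\in U$ the closed subset $Z_{\overline{s}}$ has codimension $\geq 2$ in $X_{\overline{s}}$. Granting this, fix $s\in U$ and set $W := X_{\overline{s}}$, which is normal since $f$ has geometrically normal fibres, and $W^{\circ} := W-Z_{\overline{s}}$. Because the complement of $W^{\circ}$ in $W$ has codimension $\geq 2$, the open immersion $W^{\circ}\hookrightarrow W$ induces a bijection between the codimension-$1$ points of $W$ and those of $W^{\circ}$, and for each such point $\eta$ the local rings $\cO_{W,\eta}$ and $\cO_{W^{\circ},\eta}$ coincide; as $\eta\notin Z_{\overline{s}}$, the stalks of $\cE|_{W}$ and of $(\cE|_{X-Z})|_{W^{\circ}}$ at $\eta$ agree as well. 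Hence the torsion divisors $T(\cE|_{W})$ and $T((\cE|_{X-Z})|_{W^{\circ}})$ have the same support and the same multiplicities, so in particular their maximal multiplicities coincide, i.e. $\chi_{\cE}(s)=\chi_{\cE|_{X-Z}}(s)$.

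It remains to prove the claim. Since $f$ is flat of finite type and dominant (a normal morphism sends the generic point of $X$ to that of $S$ by \cref{normal_morphism}), every nonempty fibre $X_{\overline{s}}$ is equidimensional of dimension $d := \dim X - \dim S$; being of finite type over a field it is catenary, so a point has codimension $1$ in $X_{\overline{s}}$ exactly when its closure has dimension $d-1$. It therefore suffices to arrange $\dim Z_{\overline{s}}\leq d-2$. I would decompose $Z=\bigcup_i Z_i$ into its finitely many irreducible components, each satisfying $\dim Z_i\leq \dim X - 2$. For each $i$: if $Z_i$ does not dominate $S$, then over the dense open $S-\overline{f(Z_i)}$ the fibre $(Z_i)_{\overline{s}}$ is empty; if $Z_i$ dominates $S$, then by the theorem on the generic dimension of the fibres of a dominant morphism \cite{SP} there is a dense open $U_i\subseteq S$ over which $\dim (Z_i)_{\overline{s}} = \dim Z_i - \dim S \leq \dim X - 2 - \dim S = d-2$. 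Intersecting the resulting finitely many dense opens (a dense open since $S$ is irreducible) yields $U$ with $\dim Z_{\overline{s}}\leq d-2$ for all $s\in U$.

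The main obstacle is the bookkeeping around fibre dimensions over a geometric point: one must pass from the bound $\codim(Z,X)\geq 2$ on the total space to the fibrewise bound $\codim(Z_{\overline{s}},X_{\overline{s}})\geq 2$, which requires both the generic constancy of fibre dimension for the components of $Z$ and the equidimensionality of the fibres of $f$ (so that ``codimension $1$'' on $X_{\overline{s}}$ can be detected by dimension of closures). Once these standard facts are in place, the comparison of torsion divisors is immediate from the fact that $T$ depends only on codimension-$1$ points. Note also that this argument produces the dense open directly and so does not rely on the constructibility statement of \cref{dominant_by_constructible_max_multi}.
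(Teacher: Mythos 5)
Your proof is correct and follows essentially the same route as the paper: both arguments reduce to showing, after shrinking $S$, that $Z_{\overline{s}}$ has codimension $\geq 2$ in the equidimensional fibre $X_{\overline{s}}$ (the paper via the generic fibre-dimension theorem applied to $X\to S$ and to the components $Z_i\to S$, you via flatness for the fibres of $f$ plus the same generic fibre-dimension theorem for the $Z_i$), and then conclude because the torsion divisor only depends on codimension-$1$ points, which all lie outside $Z_{\overline{s}}$. The only differences are cosmetic: you handle non-dominant components of $Z$ by removing the closure of their image, and you spell out the comparison of torsion divisors that the paper leaves implicit.
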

\begin{proof}
We can assume that $Z$ is reduced.
Let $Z_1,\dots, Z_n$ be its irreducible components.
By \cite[Proposition 11.4.1]{Vakil_rising_sea} applied to $f : X\to S$ and to the $Z_i\to S$,  at the cost of shrinking $S$, we can suppose that for every $s\in S$, the fibre $X_s$ has pure dimension $\dim X-\dim S$ and the $Z_{i,s}$ have pure dimension $\dim Z_i-\dim S$.
Hence, for every $s\in S$ we have 
$$
\dim Z_{i,s} = \dim Z_i-\dim S= \dim X_s + \dim Z_i- \dim X \ .
$$
Since $\dim Z_i <\dim X-1$, we deduce
$$
\dim Z_{i,s} <\dim X_s -1 
$$
where $X_s$ and $Z_{i,s}$ have pure dimension.
Thus, $Z_{s}$ has codimension at least 2 in $X_s$.  
Hence, we have $\chi_{\cE}(s)=\chi_{\cE|_{X-Z}}(s)$.
\end{proof}

\begin{proof}[Proof of \cref{dominant_by_constructible_max_multi}]
Let $\cE\in \Coh(X)$.
We run the  following d\'evissage :
\begin{itemize}\itemsep=0.2cm
\item[(i)] Since  $S$ is noetherian, 
it is enough to find a dense open subset $V\subset S$  such that $ \chi_{\mathcal E}|_V=\chi_{\mathcal E|_{f^{-1}(V)}}$ is constructible.

\item[(ii)]
At the cost of replacing $S$ by $S^{\mathrm{red}}$ we may assume that $S$ is reduced.

\item[(iii)]  By (i) and (ii), we are reduced to the case where $S=\mathrm{Spec}(A)$ is affine, connected and smooth over $k$.

\item[(iv)] Let $\{U_i\}_{1\leq i\leq m}$ be a finite Zariski open cover of $X$. 
Observe that 
$$
\chi_{\cE}=\max_{1\leq i\leq m}\chi_{\cE|_{U_i}} \ .
$$
Hence, we are left to show that each $\chi_{\cE|_{U_i}}$ is constructible.
Since $X$ is noetherian, we can thus assume that $X$ is affine.
By  \cref{normal_morphism},  we deduce that $X$ is normal affine.
By \cite[0357]{SP}, the scheme $X$ is a finite disjoint union of normal affine integral schemes.
Hence, at the cost of replacing $X$ by its irreducible components, we can assume that $X$ is integral normal affine.
By  \cref{normal_morphism},  its pullback stays so after replacing $S$ by an affine dense open subset.

\item[(v)]
Consider the exact sequence
$$
0 \to \cE_{\tors}\to \cE\to \cE'\to 0
$$
where $\cE_{\tors}$ is the torsion subsheaf of $\cE$ and where $\cE'$ is torsion free.
Since $X$ is normal, the sheaf $\cE'$ is locally free in a neighbourhood of every codimension 1 point of $X$.
Hence, $\cE'$ is locally free on the complement of a closed subset $Z\subset X$ of a codimension at least 2.
Applying  \cref{bound_chi_reduction} to $Z$ and (iv)  to $X-Z$, we can assume that $\cE'$ is locally free with $X$ is integral normal affine over $k$. 
In that case, the above sequence split.
Hence, we have 
$$
\chi_{\cE} = \chi_{\cE_{\tors}} + \chi_{\cE'} = \chi_{\cE_{\tors}}\ .
$$
Thus, we can further assume that $\cE$ is a torsion sheaf.

\item[(vi)]
By \cite[00L0]{SP},  the sheaf $\cE$ admits a finite filtration by coherent subsheaves
$$
0=\cE_0 \subset \cE_1  \subset \cdots \subset \cE_{n-1} \subset \cE_n=\cE
$$
such that for every $i=0,\dots, n-1$, we have $\cE_{i+1}/\cE_i \simeq\cO_{Z_i}$ for some closed irreducible subset $Z_i\subset X$.
By generic flatness theorem \cite[Théorème 6.9.1]{EGA4-2}, we may assume that each $Z_i$ is flat over $S$.
Since $\cE$ is torsion, so are each $\cE_i$.
Hence, the $\cO_{Z_i}$ are torsion.
In particular, $Z_i\subset X$ is a strict closed irreducible subset.
By \cref{bound_chi_reduction}, we can thus assume that each $Z_i$ is an effective Cartier divisor of $X$ relative to $S$.

\item[(vii)]
Let $Z\subset X$ be a closed subscheme.
By \cite[0550]{SP} applied to $Z\to S$, there is a pullback square
$$
	\begin{tikzcd}
		     Z'  \arrow{r}  \arrow{d}    &   Z    \arrow{d} \\
	     X'  \arrow{r}{h}  \arrow{d}    &   X    \arrow{d}{f} \\
	       S'            \arrow{r}{g}    & S  
	\end{tikzcd}
$$
where $g : S' \to S$ is open quasi-finite with $S'$ integral affine, where $Z'^{\red}$ is flat over $S'$ with geometrically reduced generic fibre.
Observe that
$$
\chi_{h^*\cE}(s') = \chi_{\cE}(g(s')) 
$$
for every $s'\in S'$.
At the cost of replacing $S$ by a dense open subset of $S'$, we are left to treat the case where $S$ is integral smooth affine over $k$, where $X$ is  integral normal affine and
$\cE$ admits a finite filtration by coherent subsheaves
$$
0=\cE_0 \subset \cE_1  \subset \cdots \subset \cE_{n-1} \subset \cE_n=\cE
$$
such that for every $i=0,\dots, n-1$, we have $\cE_{i+1}/\cE_i \simeq\cO_{Z_i}$ for some effective Cartier divisor $Z_i\subset X$ relative to $S$ where $Z_i^{\red}$ has geometrically reduced generic fibre over $S$.
By \cite[Theorem 9.7.7]{EGA4-3} at the cost of shrinking $S$,  we can further assume that the $Z_i^{\red}$ have geometrically reduced fibres over $S$.
For every algebraic geometric point $\overline s\to S$, the chain 
$$
0=\cE_0|_{X_{\overline{s}}} \subset \cE_1|_{X_{\overline{s}}}  \subset \cdots \subset \cE_{n-1}|_{X_{\overline{s}}} \subset \cE_n|_{X_{\overline{s}}}=\cE|_{X_{\overline{s}}}
$$
is a filtration of $\cE|_{X_{\overline{s}}}$ with torsion quotients.
Since $X_{\overline{s}}$ is integral, each $\cE_i|_{X_{\overline{s}}}$ is a torsion sheaf.
Thus, \cref{devissage_E_0} yields
$$
T(\cE|_{X_{\overline{s}}})= T(\cO_{Z_{1,\overline{s}}})+ \cdots +  T(\cO_{Z_{n,\overline{s}} }) \ .
$$
Let $\cZ$ be the set of integral effective Cartier divisors of $X$ relative to $S$ with geometrically reduced fibres over $S$.
In particular, we can write
$$
Z_i= \sum_{Z\in \cZ} e_{i,Z}\cdot Z  
$$
where $e_{i,Z} \in \bN$.
For every $s\in S$, \cref{RE_Cartier} gives
$$
T(\cO_{Z_{i,\overline{s}}})= \sum_{Z\in \cZ}  e_{i,Z}\cdot T(\cO_{Z_{\overline{s}}}) \ .
$$
Thus,
$$
T(\cE|_{X_{\overline{s}}})= \sum_{Z\in \cZ}  \left(\sum_{i=1}^n   e_{i,Z}\right)\cdot T(\cO_{Z_{\overline{s}}}) \ .
$$
Since the $Z_{\overline{s}}$ are \textit{reduced} effective Cartier divisors of $X_{\overline{s}}$, the multiplicities of the $T(\cO_{Z_{\overline{s}}})$ are equal to 1.
Thus, 
$$
T(\cE|_{X_{\overline{s}}})= \sum_{Z\in \cZ}  \left(\sum_{i=1}^n   e_{i,Z}\right)\cdot Z_{\overline{s}} \ .
$$
On the other hand for $Z,Z'\in \cZ$ distinct contributing to the $Z_i$, the subscheme $Z\times_X Z'\subset X$ has codimension at least 2.
By \cref{bound_chi_reduction}, at the cost of shrinking $S$, we can assume that for every $Z,Z'\in \cZ$ distinct contributing to the $Z_i$, the divisors $Z$ and $Z'$ are disjoint.
In particular,  for every $s\in S$ so are the divisors $Z_{\overline{s}}$ and  $Z'_{\overline{s}}$.
Hence,  for every $s\in S$ we have
$$
\chi_{\cE}(s)= \max_{Z\in \cZ}\{\sum_{i=1}^n   e_{i,Z}\}   \ .
$$
\end{itemize}
\end{proof}

\begin{definition}\label{pulback_weil_divisor}
Let $f : Y\to X$ be a morphism between normal noetherian schemes. 
We say that  \textit{$D\in \Weil(X)_{\bQ}$ pull-backs along $f : Y\to X$} if for every
integral closed subscheme $Z\subset X$ contributing to $D$,  the pull-back $Y\times_X Z$ has pure codimension 1 in $Y$.
\end{definition}

For every $D\in \Weil(X)_{\bQ}$ which pull-backs along $f : Y\to X$, there is a well defined pull-back $f^*D\in \Weil(Y)_{\bQ}$.
If every $D\in \Weil(X)_{\bQ}$ pull-backs along $f : Y\to X$, there is a well-defined $\bQ$-linear map 
$$
f^* : \Weil(X)_{\bQ}\to \Weil(Y)_{\bQ} \ .
$$
We describe two cases of interest.

\begin{example}\label{scalar_exte}
Let $X$ be a noetherian geometrically normal scheme over a field $k$ and let $K/k$ be a field extension. 
Then, every $D\in \Weil(X)_{\bQ}$ pull-backs along  $X_K \to X$.
\end{example}

\begin{example}\label{flat_pull}
Let $f : Y\to X$ be a dominant morphism between regular noetherian schemes. 
Then, every Weil divisor on $X$ is Cartier and similarly with $Y$. 
On the other hand, Cartier divisors on $X$ pull-back to $Y$ since $f : Y\to X$ is dominant.
Hence, every $D\in \Weil(X)_{\bQ}$ pull-backs along $f : Y\to X$.
\end{example}

\begin{lem}\label{inequality_mR}
Let $f : Y\to X$ be a morphism between normal noetherian schemes and let $\cE\in \Coh(X)$ such that $T(\cE)$ pull-backs along $f : Y\to X$.
Let $\xi  \in Y$ be a codimension 1 point such that $\eta = f(\xi) \in X$ is a codimension 1 point.
Then,  the multiplicities of $f^*T(\cE)$ and $T(f^*\cE)$ at $\xi$ are the same.
\end{lem}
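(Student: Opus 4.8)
The plan is to compute both multiplicities locally at $\xi$ and $\eta$, reducing the statement to an elementary computation over discrete valuation rings. First I would set $A := \cO_{X,\eta}$ and $B := \cO_{Y,\xi}$. Since $X$ and $Y$ are normal and $\eta,\xi$ are codimension $1$ points, both $A$ and $B$ are integrally closed one-dimensional local domains, hence discrete valuation rings; let $\pi_A,\pi_B$ be uniformizers. The morphism $f$ together with $f(\xi)=\eta$ induces a local homomorphism $A\to B$. Writing $M := \cE_\eta$, which is a finitely generated $A$-module, the non-derived pull-back gives $(f^*\cE)_\xi \simeq M\otimes_A B$.

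Next I would express the two multiplicities. On the one hand, by definition $m_\xi(T(f^*\cE)) = \length_B\big((M\otimes_A B)^{\tors}\big)$. On the other hand, $f^*T(\cE) = \sum_{\eta'}\length_{\cO_{X,\eta'}}(\cE_{\eta'}^{\tors})\cdot f^*\overline{\{\eta'\}}$, and among the codimension $1$ points $\eta'$ of $X$ only $\eta'=\eta$ can contribute to the multiplicity at $\xi$: indeed $m_\xi(f^*\overline{\{\eta'\}})\neq 0$ forces $\eta=f(\xi)\in\overline{\{\eta'\}}$, and an inclusion $\overline{\{\eta\}}\subseteq\overline{\{\eta'\}}$ of codimension $1$ irreducible closed subsets must be an equality, whence $\eta=\eta'$. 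Hence $m_\xi(f^*T(\cE)) = \length_A(M^{\tors})\cdot e$, where $e := m_\xi(f^*\overline{\{\eta\}})$. Since $A$ is a DVR, the ideal of $\overline{\{\eta\}}$ is principal near $\eta$, cut out by $\pi_A$, so $e = v_B(\pi_A)$ is the valuation of the image of $\pi_A$ in $B$, i.e. the ramification index of $A\to B$.

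The crux is then the purely local identity $\length_B\big((M\otimes_A B)^{\tors}\big) = e\cdot\length_A(M^{\tors})$. I would prove it via the structure theorem for finitely generated modules over the DVR $A$: writing $M\simeq A^r\oplus\bigoplus_j A/\pi_A^{a_j}$, the torsion part has length $\sum_j a_j$, while tensoring with $B$ and using $\pi_A = (\text{unit})\cdot\pi_B^{e}$ turns each cyclic factor into $B/\pi_B^{e a_j}B$ of length $e a_j$; the free part $B^r$ stays torsion-free, so the torsion of $M\otimes_A B$ has length $e\sum_j a_j$. Comparing with the previous paragraph finishes the proof.

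The main point requiring care is the finiteness (and positivity) of $e$, which is exactly where the hypothesis that $T(\cE)$ pull-backs along $f$ enters. If $\length_A(M^{\tors})=0$, then $M$ is torsion-free, hence free over the DVR $A$, so $M\otimes_A B$ is free and both multiplicities vanish; here nothing needs to be said about $e$. If $\length_A(M^{\tors})>0$, then $\overline{\{\eta\}}$ genuinely contributes to $T(\cE)$, so by hypothesis $Y\times_X\overline{\{\eta\}}$ has pure codimension $1$; were $\pi_A$ to map to $0$ in $B$, the irreducible component of $Y$ through $\xi$ would map into $\overline{\{\eta\}}$ and $Y\times_X\overline{\{\eta\}}$ would contain a codimension $0$ part, a contradiction. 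Thus $A\to B$ is injective and $e = v_B(\pi_A)$ is a finite positive integer, legitimizing the computation above.
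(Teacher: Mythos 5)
Your proof is correct and follows essentially the same route as the paper: localize at $\xi$ and $\eta$, invoke the structure theorem for finitely generated modules over the DVR $\cO_{X,\eta}$, and compare lengths after tensoring with $\cO_{Y,\xi}$, with the ramification index $e$ scaling both sides. The only difference is that you spell out two points the paper leaves implicit — that only the component $\overline{\{\eta\}}$ of $T(\cE)$ can contribute at $\xi$, and that the pull-back hypothesis forces $e$ to be finite and positive whenever $\cE_\eta$ has nonzero torsion — which is a welcome refinement rather than a different argument.
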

\begin{proof}
%At the cost of shrinking $X$ and $Y$ around $\eta$ and $\xi$ respectively, we can suppose that $X$ and $Y$ are regular.
Let $\pi$ be a uniformizer of $\cO_{X,\eta}$ and write
$$
\cE|_{X_{\eta}} \simeq \cO_{X_{\eta}}^r \oplus \cO_{X_{\eta}}/(\pi)^{ n_1}\oplus \cdots  \oplus  \cO_{X_{\eta}}/(\pi)^{n_k} \ .
$$
Then,  the multiplicity of  $T(\cE)$ at $\eta$ is $n_1+\cdots + n_k$.
If  $e$ is the valuation of $f^*\pi$ at $\xi$, the multiplicity  $f^* T(\cE)$ at $\xi$ is thus $(n_1+\cdots + n_k)\cdot e$.
If $\pi'$ is a uniformizer of $\cO_{Y,\xi}$ we have 
$$
(f^*\cE)|_{Y_{\xi}} \simeq \cO_{Y_{\xi}}^r \oplus \cO_{Y_{\xi}} /(\pi^{\prime})^{ e\cdot n_1}\oplus \cdots  \oplus  \cO_{Y_{\xi}} /(\pi^{\prime})^{e\cdot n_k} \ .
$$
Hence, the multiplicity  $T(f^*\cE)$ at $\xi$ is $(n_1+\cdots + n_k)\cdot e$.
\end{proof}

Observe that in the next lemma, no finiteness assumption is required.

\begin{lem}\label{flat_pullback}
Let $f : Y\to X$ be a flat morphism between normal noetherian schemes and let $\cE\in \Coh(X)$ such that $T(\cE)$ pull-backs along $f : Y\to X$.
Then,  
$$
f^*T(\cE)=T(f^*\cE) \ .
$$
\end{lem}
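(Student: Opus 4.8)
The plan is to establish the equality of the two $\bQ$-Weil divisors $f^*T(\cE)$ and $T(f^*\cE)$ on $Y$ by comparing their multiplicities at each codimension $1$ point $\xi\in Y$; the left-hand side is well defined because $T(\cE)$ pull-backs along $f$ by hypothesis, and both sides are Weil divisors, so a pointwise comparison suffices. First I would set $\eta := f(\xi)$ and use flatness of $f$ to control the codimension of $\eta$: the induced local homomorphism $\cO_{X,\eta}\to \cO_{Y,\xi}$ is flat, so the dimension formula for flat local homomorphisms of noetherian local rings gives
$$
\dim \cO_{Y,\xi} = \dim \cO_{X,\eta} + \dim \left(\cO_{Y,\xi}/\fm_{\eta}\cO_{Y,\xi}\right) \ .
$$
Since $\codim(\xi,Y) = \dim \cO_{Y,\xi} = 1$, this forces $\dim \cO_{X,\eta}\leq 1$, so $\eta$ is either a codimension $1$ point or a generic point of $X$. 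This dichotomy is the structural backbone of the argument, as it makes the two cases below exhaustive.

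In the first case, where $\eta$ is a codimension $1$ point of $X$, I would simply invoke \cref{inequality_mR}, which states precisely that $f^*T(\cE)$ and $T(f^*\cE)$ have the same multiplicity at such a $\xi$.

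The remaining case, where $\eta$ is a generic point of $X$, is where I expect the real content to sit, and the goal there is to show that both divisors have multiplicity $0$ at $\xi$. For $T(f^*\cE)$: since $X$ is normal, hence reduced, $\cO_{X,\eta}$ is a field and $\cE_{\eta}$ is therefore a free $\cO_{X,\eta}$-module; flatness of $f$ then gives $(f^*\cE)_{\xi}\simeq \cO_{Y,\xi}\otimes_{\cO_{X,\eta}}\cE_{\eta}$, a free module over the discrete valuation ring $\cO_{Y,\xi}$ (here $Y$ is normal and $\xi$ has codimension $1$), whose torsion part is zero, so the multiplicity of $T(f^*\cE)$ at $\xi$ vanishes. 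For $f^*T(\cE)$: its support is contained in the preimage of the support of $T(\cE)$, a finite union of codimension $1$ integral subschemes $\overline{\{\eta'\}}$ of $X$; as $\eta$ is generic it belongs to no such $\overline{\{\eta'\}}$, whence $\xi$ lies outside $f^{-1}(\supp T(\cE))$ and the multiplicity of $f^*T(\cE)$ at $\xi$ vanishes too. Assembling the two cases over all codimension $1$ points of $Y$ will give the claimed identity.

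I expect the only genuine obstacle to be conceptual rather than computational: recognizing that flatness enters solely through the dimension formula, thereby reducing everything to the already-settled ramified case of \cref{inequality_mR} together with the transparent generic-point case. The point requiring care is precisely this last case, namely verifying that the codimension $1$ points of $Y$ lying over generic points of $X$ (which arise from positive-dimensional fibres) contribute to neither side.
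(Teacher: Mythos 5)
Your proof is correct and follows essentially the same route as the paper: both arguments compare multiplicities at codimension~$1$ points $\xi\in Y$, reduce the case $f(\xi)$ of codimension~$1$ to \cref{inequality_mR}, and dispose of the case where $f(\xi)$ is a generic point by observing that $\cE$ is (locally) free there so neither divisor can be supported at $\xi$. The only cosmetic differences are that you derive the dichotomy on $\dim\cO_{X,f(\xi)}$ from the dimension formula for flat local homomorphisms where the paper cites \cite[Proposition 2.3.4]{EGA4-2}, and that you treat all codimension~$1$ points in one pass rather than arguing separately for points contributing to each side.
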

\begin{proof}
Let $\xi \in Y$ be a codimension 1 point contributing to $f^*T(\cE)$ and put $\eta=f(\xi)$.
Since $f^* T(\cE)\to T(\cE)$ is flat, we deduce by \cite[Proposition 2.3.4]{EGA4-2} that $\eta$ is a generic point of $T(\cE)$, and thus a codimension 1 point of $X$.
By  \cref{inequality_mR}, we deduce that $f^*T(\cE)$ and $T(f^*\cE)$ have the same multiplicity at $\xi$.
\\ \indent
To conclude, let $\xi \in Y$ be a codimension 1 point contributing to $T(f^*\cE)$ and put $\eta=f(\xi)$.
By \cref{inequality_mR}, we are left to show that $\eta$ is a codimension 1 point of $X$.
Assume otherwise.
Since $f : Y\to X$ is flat, we deduce by \cite[Proposition 2.3.4]{EGA4-2}  that $\eta$ is a generic point of $X$.
Hence, $\cE$ is locally free in a neighbourhood of $\eta$.
Thus, $f^*\cE$ is locally free in a neighbourhood of $\xi$.
Hence $\xi$ does not contribute to $T(f^*\cE)$, contradiction.
\end{proof}

\begin{cor}\label{same_cRE}
Let $X$ be a normal scheme of finite type over $k$ and let $\cE\in \bQ[\Coh(X)]$.
Then, $T(\cE)_{\overline{k}}=T(\cE_{\overline{k}})$ where the subscript refers to the pullback along $ X_{\overline{k}}\to X$.
\end{cor}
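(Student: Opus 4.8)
The plan is to reduce the statement to \cref{flat_pullback}. Since the map $T$ and the pullback on $\bQ[\Coh(-)]$ are both $\bQ$-linear, I would first reduce to the case where $\cE$ is a genuine coherent sheaf on $X$, and write $f : X_{\overline k}\to X$ for the base-change morphism associated to $k\to \overline k$. It then suffices to verify the hypotheses of \cref{flat_pullback} for $f$ and $\cE$, namely that $f$ is a flat morphism between normal noetherian schemes and that $T(\cE)$ pulls back along $f$ in the sense of \cref{pulback_weil_divisor}.

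Checking flatness is immediate: $f$ is the base change of the flat morphism $\Spec\overline k\to\Spec k$, hence flat, and $X_{\overline k}$ is of finite type over $\overline k$, hence noetherian. The normality of the source is the crux of the argument: here I would use that $k$ is \emph{perfect}, so that the normal scheme $X$ is geometrically normal over $k$, whence $X_{\overline k}$ is again normal. Granting geometric normality, \cref{scalar_exte} applies with $K=\overline k$ and guarantees that every element of $\Weil(X)_{\bQ}$, and in particular $T(\cE)$, pulls back along $f$.

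With these three points in hand, \cref{flat_pullback} yields $f^*T(\cE)=T(f^*\cE)$, that is $T(\cE)_{\overline k}=T(\cE_{\overline k})$ for $\cE\in\Coh(X)$, and the general case $\cE\in\bQ[\Coh(X)]$ then follows by $\bQ$-linearity. I expect the only genuine input beyond bookkeeping to be the geometric normality of $X$, which is precisely where the perfectness of $k$ enters; everything else is a formal check of the hypotheses of the already-established \cref{flat_pullback} and \cref{scalar_exte}.
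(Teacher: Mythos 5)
Your proposal is correct and follows the same route as the paper's proof: reduce by $\bQ$-linearity to $\cE\in\Coh(X)$, invoke \cref{scalar_exte} (whose geometric-normality hypothesis holds since $k$ is perfect) to see that $T(\cE)$ pulls back, and conclude by \cref{flat_pullback}. The paper leaves the flatness, noetherianity, and geometric-normality checks implicit, but your verification of them is exactly the intended content.
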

\begin{proof}
The above statement  makes sense by \cref{scalar_exte}.
We can reduce to the case where $\cE\in \Coh(X)$. 
This case follows from \cref{flat_pullback}.
%The statement of \cref{same_cRE} makes sense by \cref{scalar_exte}.
%We can suppose that $\cE\in \Coh(X)$.
%Since $X$ is normal,  we can choose a dense open subset $U\subset X$ smooth over $k$ such that $Z:=X-U$ has codimension $\geq 2$ in $X$.
%In particular,  $Z_{\overline{k}}$ has codimension $\geq 2$ in $X_{\overline{k}}$.
%Hence,  it is enough to prove that the restriction of $T(\cE)_{\overline{k}}$ and $T(\cE_{\overline{k}})$ to $U_{\overline{k}}$ are equal.
%Thus, at the cost of replacing $X$ by $U$,  we can assume that $X$ is smooth over $k$.
%In that case, $T(\cE)$ is automatically an effective Cartier divisor.
%Then,  \cref{same_cRE} follows from \cref{flat_pullback}.
\end{proof}

\begin{cor}\label{flat_pullback_reg}
Let $f : Y\to X$ be a flat morphism between regular schemes. 
For every $\cE\in \bQ[\Coh(X)]$, we have
$$
f^*T(\cE)=T(f^*\cE) \ .
$$
\end{cor}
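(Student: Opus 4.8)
The plan is to reduce to \cref{flat_pullback}. Both sides of the claimed identity are assembled from the $\bQ$-linear maps $T : \bQ[\Coh(X)] \to \Weil(X)_{\bQ}$, $f^* : \bQ[\Coh(X)] \to \bQ[\Coh(Y)]$ and $T : \bQ[\Coh(Y)] \to \Weil(Y)_{\bQ}$, and the pull-back of Weil divisors is additive on those divisors that pull back; hence it suffices to prove the statement for a single $\cE \in \Coh(X)$. For such an $\cE$, a regular scheme is normal and $f$ is flat by hypothesis, so \cref{flat_pullback} will yield the desired equality \emph{provided} $T(\cE)$ pull-backs along $f$ in the sense of \cref{pulback_weil_divisor}; this is also precisely what makes the left-hand side $f^* T(\cE)$ meaningful in the first place.

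So the heart of the matter is to verify this pull-back condition: for every prime divisor $Z \subset X$ contributing to $T(\cE)$, the fibre product $Y \times_X Z$ has pure codimension $1$ in $Y$. I would check this locally. Since $X$ is regular, its local rings are unique factorization domains, so $Z$ is a Cartier divisor, locally of the form $V(g)$ for a nonzerodivisor $g \in \cO_X$. Over a suitable affine $\Spec B \subset Y$ mapping into $\Spec A \subset X$ with $A \to B$ flat, the preimage $Y \times_X Z$ is cut out by the image $\bar g$ of $g$ in $B$. Flatness of $A \to B$ preserves injectivity of multiplication by $g$, so $\bar g$ is again a nonzerodivisor; where the preimage is nonempty, $\bar g$ is moreover a non-unit, and Krull's Hauptidealsatz then forces every irreducible component of $V(\bar g)$ to have codimension exactly $1$. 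Thus $Y \times_X Z$ is empty or pure of codimension $1$, which is exactly the pull-back condition. Alternatively, after restricting to connected components---on which a regular scheme is integral and a nonempty flat morphism is automatically dominant, its image being stable under generization---this condition is nothing but \cref{flat_pull}.

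With the pull-back condition established, \cref{flat_pullback} applies verbatim and gives $f^* T(\cE) = T(f^* \cE)$, completing the reduction. I expect the only genuine obstacle to be the verification of the pull-back condition in the possibly non-dominant case; the argument combining the Cartier description of $Z$, the stability of nonzerodivisors under flat base change, and Krull's principal ideal theorem settles it cleanly, while the remaining steps (reduction to $\Coh(X)$ by $\bQ$-linearity and the final citation of \cref{flat_pullback}) are formal.
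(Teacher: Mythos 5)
Your proof is correct and takes essentially the same route as the paper: reduce by $\bQ$-linearity to a single $\cE\in\Coh(X)$, verify that $T(\cE)$ pull-backs along $f$ in the sense of \cref{pulback_weil_divisor}, and then apply \cref{flat_pullback}. The only difference is that the paper disposes of the pull-back condition by citing \cref{flat_pull}, which as stated assumes $f$ dominant, whereas your direct local verification (Cartier description of $Z$ on a regular scheme, stability of nonzerodivisors under flat base change, and Krull's Hauptidealsatz) --- or equivalently your componentwise reduction using that a nonempty flat morphism to an integral scheme is dominant --- cleanly covers the possibly non-dominant case as well.
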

\begin{proof}
The above statement makes sense by  \cref{flat_pull}.
We can reduce to the case where $\cE\in \Coh(X)$. 
This cases follows from \cref{flat_pullback}.
\end{proof}

\section{Bounding log conductors with coherent sheaves}\label{bounded_ramification_section}

\begin{definition}\label{bounded_conductor}
Let $X$ be a scheme of finite type over $k$.
Let $\cK\in D_{ctf}^b(X,\Lambda)$ and  $\cE\in \bQ[\Coh(X)]$.
We say that \textit{$\cK$ has log conductors bounded by $\cE$} if for every morphism $f : C\to X$ over $k$ where $C$ is a smooth curve over  $k$, we have
$$
LC(\cH^i\cK|_C)\leq T(f^*\cE) 
$$
for every $i\in \mathds{Z}$.
We denote by $D_{ctf}^b(X,\cE,\Lambda)$ the full subcategory of $D_{ctf}^b(X,\Lambda)$ spanned by objects having log conductors bounded by $\cE$.
\end{definition}

\begin{rem}
Note that one could define a variant of $D_{ctf}^b(X,\cE,\Lambda)$ by using Swan conductors instead of logarithmic conductors.
Our justification for choosing the latter comes from the shape of the Betti estimates from \cite{HuTeyssier_bound_etale}, which are polynomial in the logarithmic conductor $\lc_D(\cL)$ and linear in the rank.
In particular, the inequality $\lc_D(\cL)\leq \sw_D \cL$ yields Betti estimates which are again polynomial in the Swan conductor and linear in the rank.
Using Swan conductors as ramification bounds from the start would  again lead to Betti estimates which are polynomial in the Swan conductor and linear in the rank. 
However when expressed in terms of logarithmic conductors, the inequality $\sw_D \cL \leq \lc_D(\cL)\cdot \rk \cL$ would yield a polynomial dependency in the rank.
This explains our choice for logarithmic conductors against Swan conductors in the definition of $D_{ctf}^b(X,\cE,\Lambda)$.
\end{rem}

\begin{rem}
If $Z = a_1\cdot Z_1 + \cdots +  a_n\cdot Z_n$ is a formal sum of closed subschemes with rational coefficients and if we put 
$$
\cE = \bigoplus_{i=1}^n a_i\cdot \cO_{Z_i} \in  \bQ[\Coh(X)] \ ,
$$
we will denote $D_{ctf}^b(X,Z,\Lambda)$ instead of $D_{ctf}^b(X,\cE,\Lambda)$ and say that $\cK\in D_{ctf}^b(X,\Lambda)$ has log conductors bounded by $Z$ when $\cK$ lies in $D_{ctf}^b(X,Z,\Lambda)$.
\end{rem}

%\begin{definition}
%In the setting of \cref{bounded_conductor}, we say that $\cF\in D_c^b(X,\Lambda)$ is \textit{tame} if it has log conductor bounded by 0.
%\end{definition}

\begin{lem}\label{slope_base_change_sheaf}
Let $X$ be a scheme of finite type over $k$, let $\cE\in \bQ[\Coh(X)]$ and let $\Lambda\to \Lambda'$ be a morphism between finite local rings of residue characteristic $\ell \neq p$.
For every $\cK\in D_{ctf}^b(X,\cE,\Lambda)$, we have $\cK\otimes_{\Lambda}^{L} \Lambda' \in D_{ctf}^b(X,\cE,\Lambda')$.
\end{lem}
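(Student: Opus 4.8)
The plan is to reduce the statement about the base-changed complex $\cK\otimes_{\Lambda}^{L}\Lambda'$ to the already-established behaviour of logarithmic slopes under scalar extension, recorded in \cref{slope_base_change_module}. First I would unwind the definition (\cref{bounded_conductor}): to show $\cK\otimes_{\Lambda}^{L}\Lambda' \in D_{ctf}^b(X,\cE,\Lambda')$, I must verify that for every morphism $f : C\to X$ from a smooth curve and every $i\in\bZ$, the inequality $LC\bigl(\cH^i((\cK\otimes_{\Lambda}^{L}\Lambda')|_C)\bigr)\leq T(f^*\cE)$ holds, given that $LC(\cH^i(\cK|_C))\leq T(f^*\cE)$ holds for all $i$. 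Since the right-hand side $T(f^*\cE)$ is the same bound in both cases and does not involve $\Lambda$, the whole problem is to compare the logarithmic conductor divisors of $\cH^i(\cK|_C\otimes_{\Lambda}^{L}\Lambda')$ with those of the various $\cH^j(\cK|_C)$.

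The key step is a compatibility of pullback to $C$ with the derived tensor: since $\otimes^{L}_{\Lambda}\Lambda'$ commutes with the pullback $f^*$ (it is computed termwise on a complex of flat $\Lambda$-sheaves, and $f^*$ is exact on the étale site), I have $(\cK\otimes_{\Lambda}^{L}\Lambda')|_C \simeq (\cK|_C)\otimes_{\Lambda}^{L}\Lambda'$. Then $LC$ of a cohomology sheaf is computed divisor-by-divisor via the generic logarithmic conductor $\lc_Z$, which at each integral Weil divisor $Z\subset C$ (that is, each closed point of the curve) is the largest logarithmic slope of the stalk at the corresponding henselian local field, in the sense of \cref{notation_logconductor}. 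Fixing such a point and its local field $K$, the relevant module complex is $M^{\bullet}:=\cK|_C|_{\Spec K}$, a bounded finite tor-dimension complex of finitely generated $\Lambda$-modules with continuous $G_K$-action, and \cref{slope_base_change_module} applies verbatim: any logarithmic slope $r$ of $\cH^i(M^{\bullet}\otimes_{\Lambda}^{L}\Lambda')$ is already a logarithmic slope of some $\cH^j M^{\bullet}$.

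Putting this together, for any point of $C$ and any $i$, every logarithmic slope of $\cH^i((\cK|_C)\otimes_{\Lambda}^{L}\Lambda')$ at that point is a logarithmic slope of $\cH^j(\cK|_C)$ for some $j$, hence in particular $\lc_Z(\cH^i((\cK|_C)\otimes_{\Lambda}^{L}\Lambda'))\leq \max_j \lc_Z(\cH^j(\cK|_C))$. Since $LC(\cH^j(\cK|_C))\leq T(f^*\cE)$ for every $j$ by hypothesis, the coefficient of each $Z$ in $T(f^*\cE)$ dominates every $\lc_Z(\cH^j(\cK|_C))$, and therefore also dominates $\lc_Z(\cH^i((\cK|_C)\otimes^{L}_{\Lambda}\Lambda'))$. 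As this holds at every point $Z$, I conclude $LC(\cH^i((\cK\otimes_{\Lambda}^{L}\Lambda')|_C))\leq T(f^*\cE)$ for all $i$ and all $f$, which is exactly the membership $\cK\otimes_{\Lambda}^{L}\Lambda'\in D_{ctf}^b(X,\cE,\Lambda')$.

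The only genuine subtlety, and the step I would be most careful about, is the bookkeeping with indices: the bound in \cref{slope_base_change_module} relates a slope of $\cH^i$ of the base-changed complex to a slope of \emph{some} $\cH^j$ of the original complex, not necessarily the same index. This is harmless here precisely because the bounding hypothesis $LC(\cH^j(\cK|_C))\leq T(f^*\cE)$ is imposed uniformly over all cohomological degrees $j$, so the appearance of an unrelated index $j$ costs nothing. I would also make explicit that the base change $(\cK\otimes^{L}_{\Lambda}\Lambda')|_C\simeq (\cK|_C)\otimes^{L}_{\Lambda}\Lambda'$ together with the identification $M^{\bullet}\otimes^{L}_{\Lambda}\Lambda'\simeq M^{\bullet}\otimes_{\Lambda}\Lambda'$ used in the proof of \cref{slope_base_change_module} keeps everything within the finite tor-dimension setting, so that all the cited exactness and freeness facts apply.
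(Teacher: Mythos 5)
Your proposal is correct and follows essentially the same route as the paper's proof: represent $\cK$ (via \cite[03TT]{SP}) by a bounded complex of flat constructible $\Lambda$-sheaves so that germs along curves become bounded finite tor-dimension complexes of finitely generated $\Lambda$-modules with continuous Galois action, and then invoke \cref{slope_base_change_module}. Your additional care about the index bookkeeping (a slope of $\cH^i$ of the base change being controlled by \emph{some} $\cH^j$ of the original, which is harmless since the bound $T(f^*\cE)$ is uniform in the cohomological degree) and about the compatibility $(\cK\otimes_{\Lambda}^{L}\Lambda')|_C\simeq(\cK|_C)\otimes_{\Lambda}^{L}\Lambda'$ simply makes explicit what the paper leaves implicit.
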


\begin{proof}
By \cite[03TT]{SP} every $\cK\in D_{ctf}^b(X,\Lambda)$ can be represented by a bounded complex of constructible sheaves of flat $\Lambda$-modules.
In particular, the germ of $\cK$ at any point can be represented by a bounded finite tor-dimension complex of finitely generated $\Lambda$-modules.
 Then, \cref{slope_base_change_sheaf} follows from \cref{slope_base_change_module}.
\end{proof}

\begin{recollection}\label{adic_presentation}
Let $X$ be a scheme of finite type over $k$.
Then, for every $\cK\in D^b_c(X,\overline{\bQ}_\ell)$, there is a finite extension $L/\bQ_{\ell}$ and an integral representative $\cK_{\bullet}=(\cK_m)_{m\geq 0}$ for $\cK$.
If we put $\Lambda_m := \cO_L/\mathfrak{m}_L^m$, the sheaf $\cK_m$ is an object of $D^b_{ctf}(X,\Lambda_m)$ such that $\Lambda_m\otimes_{\Lambda_{m+1}}^{L} \cK_{m+1} \simeq \cK_{m}$.
\end{recollection}

The following definition upgrades \cref{bounded_conductor} to $\Qlbar$-coefficients.

\begin{definition}
Let $X$ be a scheme of finite type over $k$.
Let $\cK\in D_{c}^b(X,\Qlbar)$ and  $\cE\in \bQ[\Coh(X)]$.
We say that \textit{$\cK$ has log conductors bounded by $\cE$} if there is a finite extension $L/\bQ_{\ell}$ and an integral representative $\cK_{\bullet}=(\cK_m)_{m\geq 0}$ for $\cK$ such that for every $m\geq 0$, we have $\cK_m\in D^b_{cft}(X,\cE,\Lambda_m)$.
We denote by $D_c^b(X,\cE,\Qlbar)$ the full subcategory of $D_c^b(X,\Qlbar)$ spanned by objects having log conductors bounded by $\cE$.
\end{definition}

\begin{lem}\label{many_properties}
Let $X$ be a scheme of finite type over $k$.
Let  $\cE\in \bQ[\Coh(X)]$ and $\cK\in D_{ctf}^b(X,\Lambda)$.

\begin{enumerate}\itemsep=0.2cm

\item\label{pullback} Assume that $\cK\in D_{ctf}^b(X,\cE,\Lambda)$ and let $f : Y\to X$ be a morphism of  schemes of finite type over $k$ .
Then $f^*\cK\in D_{ctf}^b(Y,f^*\cE,\Lambda)$.

\item\label{direct sum} Assume that $\cK\in D_{ctf}^b(X,\cE,\Lambda)$.  
Then for every $\cE'\in \bQ_{\geq 0}[\Coh(X)]$,  we have $\cK\in D_{ctf}^b(X,\cE + \cE',\Lambda)$.

\item\label{locality} Let $(f_i : U_i\to X)_{i\in I}$ be an étale cover $X$.
Then $\cK\in D_{ctf}^b(X,\cE,\Lambda)$ if and only if for every $i\in I$, we have $\cK|_{U_i}\in D_{ctf}^b(U_i,f_i^*\cE,\Lambda)$.

\item\label{triangle}  Consider a distinguished triangle
$$
\cK_1\to \cK_2 \to \cK_3\to
$$
in $D_{ctf}^b(X,\Lambda)$.
If 2 out of the 3 above complexes lie in $D_{ctf}^b(X,\cE,\Lambda)$,  so does the third.

\item\label{sequence} If the above triangle is an exact sequence of constructible sheaves, then $\cK_2$ lies in $D_{ctf}^b(X,\cE,\Lambda)$ if and only if so do $\cK_1$ and $\cK_3$.

\item\label{skyscraper_point} If $\cK_1,\cK_2\in D_{ctf}^b(X,\Lambda)$ are isomorphic away from a finite set of closed points of $X$, then $\cK_1$ has log conductors bounded by $\cE$ if and only if so does $\cK_2$.

\item \label{local_bound_to_global}
Let $(U_i)_{i\in I}$ be a finite Zariski cover of $X$ and let $\cE_i\in \bQ_{\geq 0}[\Coh(U_i)]$, $i\in I$.
Then, there exists $\cE'\in \bQ_{\geq 0}[\Coh(X)]$ depending only on the $\cE_i$ such that 
$$
\cK|_{U_i}\in D_{ctf}^b(U_i,\cE_{i},\Lambda) \text{ for every } i\in I  \Rightarrow   \cK\in D_{ctf}^b(X,\cE',\Lambda) \ .
$$

\item\label{reduction} Let $\red : X^{\red}\to X$ be the reduction morphism.
Then,  $\cK$ lies in $D_{ctf}^b(X,\cE,\Lambda)$ if and only if $\red^*\cK$ lies in $D_{ctf}^b(X,\red^*\cE,\Lambda)$.

\item\label{closed_immersion} For every closed immersion $i : X\hookrightarrow Y$,  we have $\cK\in D_{ctf}^b(X,\cE,\Lambda)$ if and only if $i_*\cK\in D_{ctf}^b(Y,i_*\cE,\Lambda)$.

\item\label{devissage_bound}
Let $ j : U\hookrightarrow X$ be an open subset and  $i : Z \hookrightarrow X$ its complement.
Assume that $\cE\in \bQ_{\geq 0}[\Coh(X)]$, that $ j_! (\cK|_U) \in D_{ctf}^b(X,\cE,\Lambda)$ and  $ \cK|_Z  \in D_{ctf}^b(Z,\cE_Z,\Lambda)$ for some $\cE_Z\in \bQ_{\geq 0}[\Coh(Z)]$.
Then,  
$$
\cK\in D_{ctf}^b(X,\cE + i_*\cE_Z,\Lambda) \ .
$$
\end{enumerate}
\end{lem}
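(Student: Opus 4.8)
The plan is to reduce every item to the pointwise numerical form of the defining inequality: for a smooth curve $f : C\to X$ and a closed point $x\in C$ with local field $K_x$, the condition $\cK\in D_{ctf}^b(X,\cE,\Lambda)$ unwinds to $\lc_{K_x}\big((\cH^i\cK|_C)|_{\Spec K_x}\big)\leq m_x(T(f^*\cE))$ for all $i$. Before treating the items I would record two inputs that act as the engine. On the coherent side, $T(\cG)$ is effective for every $\cG\in\Coh(C)$ and $T$ is additive and compatible with pullback, so $T(f^*(\cE+\cE'))=T(f^*\cE)+T(f^*\cE')$ with each summand effective. On the étale side, the decisive fact is \cref{slope_exact}: for a short exact sequence $0\to A\to B\to C'\to 0$ of finitely generated $\Lambda$-modules with continuous $P_{K}$-action one has $\lc_K(B)=\max(\lc_K(A),\lc_K(C'))$, and any sub or quotient of $B$ has log conductor $\leq\lc_K(B)$.

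The functorial items (1), (2), (3), (8) I would dispatch first. For (1), the assignment $g\mapsto f\circ g$ sends a test curve for $f^*\cK$ to one for $\cK$, and $g^*f^*=(f\circ g)^*$ holds for both étale and coherent pullback, so the inequality transports verbatim. Item (2) needs only $T(f^*\cE')\geq 0$. For (8), any smooth curve is reduced and hence factors through $\red : X^{\red}\to X$; computing $\cK|_C$ and $f^*\cE$ after this factorisation identifies the two conditions. For (3), the forward direction is (1); for the converse I would fix $x\in C$, use that $\{C\times_X U_i\to C\}$ is an étale cover to pick a point $x'$ of some $C_i:=C\times_X U_i$ above $x$, and compare at $x'$, where both $\lc$ and, by \cref{flat_pullback}, the multiplicity of $T$ are unchanged under the étale projection $C_i\to C$; the local bound on $U_i$ then yields the bound at $x$.

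The exact-sequence items (4), (5), (6) all rest on the behaviour of $\lc_K$ under exact sequences. For (4) I would restrict the triangle to $C$ and then to $\Spec K_x$; since this restriction is exact it produces a long exact sequence of $G_{K_x}$-modules $M^i_j:=(\cH^i\cK_j|_C)|_{\Spec K_x}$, in which $M^i_3$ is an extension of a submodule of $M^{i+1}_1$ by a quotient of $M^i_2$. Assuming $\cK_1,\cK_2$ bounded, \cref{slope_exact} bounds $\lc_{K_x}(M^i_3)$ by the maximum of the two, hence by $m_x(T(f^*\cE))$; the remaining two cases follow by rotating the triangle, which merely shifts cohomological degrees. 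Item (5) is the same computation for a short exact sequence, using exactness of the inverse image of sheaves and the sub/quotient inequalities. For (6), a test curve either contracts to a closed point, so $f^*\cK_1$ and $f^*\cK_2$ are constant with vanishing log conductor and both are bounded, or it meets the finite exceptional set in finitely many points, so $\cK_1|_C$ and $\cK_2|_C$ agree off finitely many closed points and have equal log conductors at every $x$.

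Finally the pushforward and gluing items (9), (10), (7). For (9) I would use $i^*i_*\simeq\id$: a test curve $g:C\to Y$ with $g(C)\subset X$ factors as $i\circ\bar g$, whence $(i_*\cK)|_C=\cK|_C$ and $g^*i_*\cE=\bar g^*\cE$, matching the two conditions along such curves, while a curve with $g(C)\not\subset X$ meets $X$ in finitely many points, so $(i_*\cK)|_C$ is supported on finitely many closed points and has vanishing log conductor. Item (10) then follows from the localisation triangle $j_!(\cK|_U)\to\cK\to i_*(\cK|_Z)\xrightarrow{+1}$, whose outer terms lie in $D_{ctf}^b(X,\cE+i_*\cE_Z,\Lambda)$ by hypothesis together with (2) and (9), so that $\cK$ does too by (4). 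For (7) I would extend each $\cE_i$ to some $\tilde\cE_i\in\bQ_{\geq 0}[\Coh(X)]$, set $\cE':=\sum_i\tilde\cE_i$, and for a test curve and $x\in C$ pick $U_{i_0}\ni f(x)$, apply the bound for $\cK|_{U_{i_0}}$ on the open curve $f^{-1}(U_{i_0})$ at $x$, and dominate by $\cE'$ using the effectivity of the $T(f^*\tilde\cE_i)$. The main obstacle is the passage in (4) and (5) from the triangulated or exact structure to numerical control of the conductor divisor: one must identify precisely how $\cH^i\cK_3|_C$ is built from subquotients of the bounded terms and invoke the exactness of the logarithmic slope functors uniformly over all local fields. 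The secondary, bookkeeping obstacle is (7), where manufacturing a single global $\cE'$ from local data relies on coherent extensions and the effectivity and additivity of $T$.
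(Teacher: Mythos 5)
Your proposal is correct and takes essentially the same route as the paper: the paper dismisses (1)--(3) and (6) as obvious, derives (4)--(5) from \cref{slope_exact} (precisely the long-exact-sequence and subquotient argument you spell out), and handles (7)--(10) exactly as you do, via coherent extension (\cite[0G41]{SP}) plus summation, factorization of curves through $X^{\red}$, the isomorphism $i^*i_*\simeq \id$ combined with the dichotomy on whether the test curve lies in $X$ (your image criterion is equivalent to the paper's dichotomy on $\dim X_C$), and the localization triangle together with (2), (4), (9). Your dispatch of the degenerate case in (9), where the pushforward restricted to the curve is supported on finitely many closed points and so has vanishing log conductor divisor, matches the paper's own gloss that there is ``nothing to prove'' in that case.
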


\begin{proof}
Items (1)(2)(3)(6) are obvious.
Items (4)(5) follow from \cref{slope_exact}.
For (7), we know by \cite[0G41]{SP} that each $\cE_i\in \bQ_{\geq 0}[\Coh(U_i)]$ extends to an element $\cE'_i\in \bQ_{\geq 0}[\Coh(X)]$.
Then, the sum of the $\cE'_i$ does the job in virtue of  (2),(3).
Item (8) follows from (1) and the fact that a smooth curve mapping to $X$ canonically factors through $X^{\red}$.
The converse implication in (9) follows from (1) and the isomorphism $i^*i_*\cE\simeq \cE$.
For the direct implication, consider the cartesian square of schemes of finite type over $k$
$$
	\begin{tikzcd}
	      X_C  \arrow{r}{f'}   \arrow{d}{i'}      &   X    \arrow{d}{i} \\
	       C            \arrow{r}{f}    & Y  
	\end{tikzcd}
$$
where $C$ is a smooth connected curve over $k$.  
We have 
$$
(i_*\cF)|_C \simeq i'_{\ast}(\cF|_{X_C}) \ .
$$
If  $X_C$ has dimension $0$,  the sheaf $(i_*\cF)|_C$ is generically 0 and there is nothing to prove.
Otherwise,  the composition $X_C^{\red}\to X_C \to C$ is an isomorphism, and the conclusion follows.
Item (10) follows immediately by (2),(4),(9) via the localization triangle.
\end{proof}

The following is our main supply of sheaf with explicit bound on the log conductors.

\begin{prop}\label{bounded_ramification_ex}
Let $X$ be a normal scheme of finite type over $k$.
Let $D$ be an effective Cartier divisor of $X$ and put $j : U:=X-D\hookrightarrow X$.
Let  $\cL\in \Loc_{tf}(U,\Lambda)$ and  $\cE\in \bQ[\Coh(X)]$.
\begin{enumerate}\itemsep=0.2cm
\item\label{bounded_ramification_ex_1} 
If $j_!\cL\in D_{ctf}^b(X,\cE,\Lambda)$, then $LC(j_!\cL) \leq T(\cE)$.
\item \label{bounded_ramification_ex_2} 
If $X$ is smooth, $j_!\cL$ has log conductors bounded by $C(j_!\cL)$.
\item\label{bounded_ramification_ex_3} 
If $X$ is smooth and if $D$ has simple normal crossings, $j_!\cL$ has log conductors bounded by $LC(j_!\cL)$.
\end{enumerate}

\end{prop}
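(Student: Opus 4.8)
The plan is to test everything against smooth curves $f : C\to X$ and to invoke the semi-continuity inputs \cref{semi_continuity} and \cref{Hu_generic_swan_via_curve}. Since $f^{*}$ is exact for étale sheaves, writing $U_C := f^{-1}(U)$ and $j_C : U_C\hookrightarrow C$ one has $\cH^0((j_!\cL)|_C) = j_{C,!}(\cL|_{U_C})$ and $\cH^i((j_!\cL)|_C) = 0$ for $i\neq 0$; hence in all three statements the only quantity to control is the divisor $LC(j_{C,!}(\cL|_{U_C}))$ on $C$, and the case $f(C)\subseteq D$, where this sheaf vanishes, is trivial throughout.

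For the second assertion I would represent the $\bQ$-divisor $C(j_!\cL)$ — which is supported on $D$, hence Cartier since $X$ is smooth — by a coherent class $\bigoplus_i a_i\cO_{Z_i}\in\bQ[\Coh(X)]$, so that $T(f^{*}C(j_!\cL)) = f^{*}C(j_!\cL)$ whenever $f(C)\nsubseteq D$. For such an $f$ the scheme $E := C\times_X D$ is an effective Cartier divisor on the smooth curve $C$, so \cref{semi_continuity} gives $C(j_{C,!}(\cL|_{U_C}))\leq f^{*}C(j_!\cL)$; combined with the pointwise inequality $\lc\leq c$ from \cref{inequalityLogNonLog} this yields $LC(j_{C,!}(\cL|_{U_C}))\leq f^{*}C(j_!\cL) = T(f^{*}C(j_!\cL))$. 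The third assertion is the same argument run with the logarithmic half of \cref{semi_continuity}, which under the normal crossings hypothesis gives directly $LC(j_{C,!}(\cL|_{U_C}))\leq f^{*}LC(j_!\cL) = T(f^{*}LC(j_!\cL))$.

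The first assertion is the substantive one: here the hypothesis is a bound over all curves and the conclusion is divisorial. It suffices to show $\lc_Z(j_!\cL)\leq m_Z(T(\cE))$ for every integral Weil divisor $Z$, which is automatic unless $Z$ is a component of $D$. Writing $\cE = \sum_j q_j[\cE_j]$ and letting $\eta$ be the generic point of $Z$, I would shrink $X$ to a dense open $W\ni\eta$ on which $W$ is smooth, $Z_W := Z\cap W$ is a smooth irreducible Cartier divisor with $D\cap W = Z_W$, and, for each $j$, the torsion part $\mathcal{T}_j$ of $\cE_j|_W$ is supported on $Z_W$ and filtered by copies of $\cO_{Z_W}$ while $\cE_j|_W/\mathcal{T}_j$ is locally free; this is possible because $X$ is normal and $k$ is perfect (so $X$ is smooth in codimension $1$), because $Z$ is generically smooth, and because torsion sheaves admit $\cO_Y$-filtrations by \cite[00L0]{SP}, all bad behaviour occurring in codimension $\geq 2$. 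By the pullback compatibility in \cref{many_properties}, $(j_!\cL)|_W$ is still bounded by $\cE|_W$, so $\lc_x(\cL|_{U_C})\leq m_x(T(f^{*}(\cE|_W)))$ for every smooth curve $f : C\to W$ with $f(C)\nsubseteq Z_W$ and every $x\in f^{-1}(Z_W)$.

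The crux is the exact identity $m_x(T(f^{*}(\cE|_W))) = m_Z(T(\cE))\cdot m_x(f^{*}Z_W)$ for all such curves. Local freeness of $\cE_j|_W/\mathcal{T}_j$ makes $\mathrm{Tor}_1^{\cO_W}(\cO_C,\cE_j|_W/\mathcal{T}_j)$ vanish, so $(f^{*}\cE_j|_W)^{\tors} = f^{*}\mathcal{T}_j$; and $\mathrm{Tor}_1^{\cO_W}(\cO_C,\cO_{Z_W}) = \ker(f^{*}g\colon \cO_C\to\cO_C) = 0$ for a local equation $g$ of $Z_W$, since $f(C)\nsubseteq Z_W$, so pulling back the $\cO_{Z_W}$-filtration of $\mathcal{T}_j$ stays exact and $m_x(T(f^{*}\cE_j|_W)) = n_j\,m_x(f^{*}Z_W)$ with $n_j = \length_{\cO_{W,\eta}}(\mathcal{T}_{j,\eta}) = m_Z(T(\cE_j))$; summing against the $q_j$ gives the identity, the signs being immaterial precisely because it is exact. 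Then $\lc_x(\cL|_{U_C})/m_x(f^{*}Z_W)\leq m_Z(T(\cE))$ for every smooth curve, in particular for the immersions parametrized by $\cI(W,Z_W)$, so \cref{Hu_generic_swan_via_curve} applied to $(W,Z_W)$ gives $\lc_Z(j_!\cL) = \lc_{Z_W}((j_!\cL)|_W)\leq m_Z(T(\cE))$. I expect the main obstacle to be this reduction to the clean local model: arranging smoothness of $W$ and $Z_W$, purity of each $\mathcal{T}_j$ along $Z_W$, and local freeness of the quotients simultaneously, while checking that the discarded codimension $\geq 2$ loci change neither $\lc_Z$ nor the coefficient of $T(\cE)$ at $Z$. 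The $\mathrm{Tor}$-vanishing that upgrades the torsion-length inequality to an exact identity is what makes the argument robust against the negative coefficients allowed in $\bQ[\Coh(X)]$.
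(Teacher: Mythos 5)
Your proof is correct and follows essentially the same route as the paper: items (2) and (3) are exactly the paper's application of \cref{semi_continuity} combined with $\lc \leq c$ (\cref{inequalityLogNonLog_divisor_rem}), and item (1) is the same reduction — localize at the generic point $\eta$ of a component of $D$, make $X$ and the divisor smooth, establish $m_x(T(f^*\cE)) = m_Z(T(\cE))\cdot m_x(f^*D)$ for curves, and conclude by \cref{Hu_generic_swan_via_curve}. The only cosmetic difference is how $\cE$ is normalized near $\eta$: the paper invokes the structure theorem over the DVR $\cO_{X,\eta}$ and spreads out to write $\cE \simeq \cO_X^r \oplus \bigoplus_i \cO_X/\cI_D^{n_i}$ on a neighbourhood, whereas you reach the same multiplicity identity through a torsion/locally-free dévissage, an $\cO_{Z_W}$-filtration and $\mathrm{Tor}_1$-vanishing — the same computation in substance, with your explicit remark about exactness handling the negative coefficients equally well.
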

\begin{proof}
Item (1) is a local question around the generic points of $D$.
Hence, we can suppose that $X$ and $D$ are smooth connected.
We argue for $\cE\in Coh(X)$, the general case being similar by linear combinations.
At the cost of shrinking $X$ further, we can suppose that $\cE$ is of the form 
$$
\cO_X^r \oplus \cO_X/\cI_D^{n_1}\oplus \cdots  \oplus  \cO_X/\cI_D^{n_k}
$$
where $n_i \in \bN^*$.
In particular $T(\cE)=(n_1+\cdots + n_k) \cdot D$.
Let $f : C\to X$ be an immersion over $k$ where $C$ is a smooth connected curve over $k$ such  that the generic point of $C$ is sent in $U$ and  that $f^{-1}(D)$ is a single point $x$.
Then, 
$$
f^*\cE \simeq \cO_C^r \oplus \cO_C/\cI_x^{m_x(f^*D)\cdot n_1}\oplus \cdots  \oplus  \cO_C/\cI_x^{m_x(f^*D)\cdot n_k} \ .
$$
In particular, 
$$
\lc_x((j_!\cL)|_C)  \leq  m_x(f^*D) \cdot m_D(T(\cE)) \ .
$$
Then (1) follows from \cref{Hu_generic_swan_via_curve}.
Item (2) follows from  \cref{semi_continuity} and \cref{inequalityLogNonLog_divisor_rem}.
Item (3) follows from \cref{semi_continuity}.
\end{proof}

\begin{prop}\label{existence_bound}
Let $X$ be a scheme of finite type over $k$.
For every $\cK\in D_{ctf}^b(X,\Lambda)$, there exists  $\cE\in \Coh(X)$ such that $\cK$ has log conductors bounded by $\cE$.
\end{prop}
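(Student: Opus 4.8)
The plan is to funnel everything into \cref{bounded_ramification_ex}(2), the only result at our disposal that manufactures a coherent bound out of a sheaf, by a Noetherian dévissage. Since the condition of \cref{bounded_conductor} is imposed on each cohomology sheaf separately and $T(\cE\oplus\cE')=T(\cE)+T(\cE')$ dominates each of its summands, I first reduce to bounding a single constructible sheaf $\cF$ (the condition still makes sense for an arbitrary constructible sheaf, regardless of tor-dimension). To dispose of the freeness hypothesis of \cref{bounded_ramification_ex}, I filter $\cF$ by the powers of the maximal ideal $\fm_\Lambda$: the graded pieces are sheaves of $\kappa$-vector spaces, where $\kappa=\Lambda/\fm_\Lambda$ is a finite field of characteristic $\ell\neq p$, hence free over $\kappa$. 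By \cref{slope_exact} the logarithmic slopes of the middle term of a short exact sequence of constructible sheaves are the union of those of the outer terms, so a common bound for the graded pieces is a bound for $\cF$; moreover the integer $\lc$, hence the divisor $LC$, attached to a $\kappa$-sheaf does not depend on whether it is regarded over $\kappa$ or over $\Lambda$. This reduces the statement to the case $\cF\in\Cons(X,\kappa)$ over a finite field.

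I then run the induction. A standard dévissage through \cref{many_properties}(8),(9) lets me assume $X$ integral with $\cF$ of full support, and I induct on $\dim X$. Over the perfect field $k$, generic smoothness provides a dense open $U\subset X$ that is smooth and on which $\cF$ is locally constant; writing $j:U\hookrightarrow X$ and $i$ for the complementary closed immersion, the short exact sequence $0\to j_!(\cF|_U)\to\cF\to i_*(\cF|_{X-U})\to 0$ together with \cref{many_properties}(5),(9) reduces me, modulo the inductive hypothesis applied to $X-U$, to bounding $j_!\cL$ for $\cL:=\cF|_U\in\Loc_{tf}(U,\kappa)$. Passing to the normalization $\nu:\tilde X\to X$, which is finite and birational, I may replace $X$ by the normal scheme $\tilde X$: on a normal scheme the non-regular locus has codimension $\ge 2$, so every generic point of the boundary divisor is a smooth point, and working Zariski-locally around these points I apply \cref{bounded_ramification_ex}(2) (using Zariski--Nagata purity to extend $\cL$ across the codimension $\ge 2$ discrepancy, which is absorbed into the induction) to bound $\tilde j_!\cL$ by its conductor divisor $C(\tilde j_!\cL)$. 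Rounding the coefficients of this $\bQ$-divisor up to integers yields an effective Cartier divisor $\tilde D$ on the smooth locus with $T(\cO_{\tilde D})\ge C(\tilde j_!\cL)$; extending $\cO_{\tilde D}$ coherently to $\tilde X$ and reassembling with the inductive bounds through \cref{many_properties}(7),(10) produces a coherent bound $\tilde\cE$ on $\tilde X$.

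The main obstacle is the transfer of $\tilde\cE$ back along the finite birational morphism $\nu$. The point in my favour is that every smooth test curve $f:C\to X$ whose generic point lands in $U$ lifts uniquely to $\tilde f:C\to\tilde X$ by the valuative criterion for the proper morphism $\nu$, and since $\nu$ is an isomorphism over $U$ one has $f^*j_!\cL\simeq\tilde f^*\tilde j_!\cL$; hence the local logarithmic conductors computed on $C$ already satisfy $LC(\cH^\bullet(j_!\cL)|_C)\le T(\tilde f^*\tilde\cE)$, with no "different" correction to worry about. What remains is the purely coherent problem of producing $\cE\in\Coh(X)$ with $T(f^*\cE)\ge T(\tilde f^*\tilde\cE)$ on every such curve. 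Because $\nu$ is not flat, this does not follow formally from \cref{flat_pullback}, and I expect this torsion comparison to be the technical heart of the argument: I would take $\cE$ to be $\nu_*\tilde\cE$ augmented by a sufficiently high power $\cO_X/\kf^{N}$ of the conductor ideal $\kf$ of $\nu$, supported on the non-normal locus, where $N$ can be chosen uniformly since the relevant torsion multiplicities are bounded by noetherianity (compare \cref{bound_cRE}); a local computation at the codimension one points lying over the non-normal locus then yields the desired domination. Every other step is a formal consequence of \cref{many_properties}.
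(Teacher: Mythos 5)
Your outer dévissage (reduction to a single sheaf, exact sequences via \cref{slope_exact}, localization triangles, induction on dimension) is sound and matches the paper's, but the core of your argument has two genuine gaps, both stemming from your decision to \emph{normalize} $X$ rather than push forward. First, after passing to $\nu : \tilde{X}\to X$ you only have $\tilde{X}$ \emph{normal}, while \cref{bounded_ramification_ex}-(2) --- and all the semi-continuity inputs behind it (\cref{semi_continuity}, \cref{Hu_generic_swan_via_curve}) --- require a \emph{smooth} ambient scheme. Your patch (``work Zariski-locally around the generic points of the boundary, use Zariski--Nagata purity, absorb the codimension $\geq 2$ discrepancy into the induction'') does not close this: the obstruction is curves passing through the singular locus $W\subset \tilde{X}$, along which you must still bound $LC((\tilde{j}_!\cL)|_C)$. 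Purity is unavailable precisely because $\tilde{X}$ fails to be regular at those points, and the dimension induction only controls restrictions to closed subschemes --- here $(\tilde{j}_!\cL)|_W=0$, which is irrelevant. Note that \cref{many_properties}-(10) requires as \emph{hypothesis} a bound for the extension by zero on all of the ambient scheme, including curves through $W$; that is exactly what is missing, so nothing is ``absorbed''. Second, the transfer of $\tilde{\cE}$ back along the non-flat $\nu$ is asserted, not proved: the counit surjection $f^*\nu_*\tilde{\cE}\twoheadrightarrow \tilde{f}^*\tilde{\cE}$ does not dominate torsion (quotients of free modules create torsion, e.g. $\cO_C\twoheadrightarrow \cO_C/\cI_x^n$), and no argument is given that a \emph{uniform} power $N$ of the conductor ideal $\kf$ suffices; you yourself flag this as the expected ``technical heart'', which in a complete proof cannot be left open.

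The paper dissolves both difficulties with a single move in the opposite direction. After reducing to $\cK$ a sheaf on an affine $X$ (via \cref{many_properties}-(4) and (7)), take a Noether normalization $\pi : X\to \bA^m_k$ and use that the counit $\pi^*\pi_*\cK\to \cK$ is \emph{surjective} for $\pi$ finite: by \cref{many_properties}-(5) a bound for $\pi^*\pi_*\cK$ bounds $\cK$, and by \cref{many_properties}-(1) it suffices to bound $\pi_*\cK$ --- now a sheaf on the \emph{smooth} scheme $\bA^m_k$. One then chooses a Cartier divisor $Z$ containing the singular locus of the sheaf, handles $\cK|_Z$ by the dimension recursion together with \cref{many_properties}-(10), and applies \cref{bounded_ramification_ex}-(2) to $j_!\cL$ directly. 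No normalization, no transfer of coherent bounds along a non-flat map, and no reduction to finite-field coefficients (which is in any case unnecessary, as the inputs hold for finite local $\Lambda$ by \cref{slope_tensor}) is needed. If you wish to salvage your route, you would have to prove a version of \cref{bounded_ramification_ex}-(2) over normal schemes \emph{and} the torsion comparison along $\nu$; the adjunction trick replaces both at once.
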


\begin{proof}
If $X$ has dimension 0, there is nothing to do.
Let  $n\geq 1$, and assume that \cref{existence_bound} holds in dimension $\leq n-1$.
Let $X$ be a scheme of finite type over $k$ of dimension $\leq n$ and let $\cK\in D_{ctf}^b(X,\Lambda)$.
To show that \cref{existence_bound} holds for $\cK$, we can suppose by \cref{many_properties}-\ref{triangle} that $\cK$ is concentrated in degree $0$.
By using a finite affine cover of $X$,  we can assume by  \cref{many_properties}-\ref{local_bound_to_global} that $X$ is affine.
By Noether normalization, there is a finite morphism $f : X\to \mathds{A}^m_k$ where $m\leq n$.
In particular, the counit map $f^*f_*\cK\to \cK$ is surjective.
By \cref{many_properties}-\ref{sequence}, we are thus left to prove \cref{existence_bound} for $f^*f_*\cK$. 
By \cref{many_properties}-\ref{pullback}, we are left to prove \cref{existence_bound} for $f_*\cK$.
Hence, we can suppose that $X$ is smooth of dimension $\leq n$.
Let $Z\subset X$ be an effective Cartier divisor containing the singular locus of $\cK$ and put $j : U:=X-Z\hookrightarrow X$.
By recursion assumption,  $\cK|_Z  \in D_{ctf}^b(Z,\cE_Z,\Lambda)$ for some $\cE_Z \in \Coh(Z)$.  
By \cref{many_properties}-\ref{devissage_bound},  we can thus further suppose that $\cK=j_!\cL$ where 
$\cL \in \Loc_{tf}(U,\Lambda)$.
In that case, $j_!\cL$ has log conductors bounded by $C(j_!\cL)$ in virtue of \cref{bounded_ramification_ex}.
\end{proof}

\begin{prop}\label{bounded_ramification_generic_fiber}
Let $f : X\to S$ be a  morphism between schemes of finite type over $k$.
Let $\cE\in \bQ[\Coh(X)]$ and $\cK\in D_{ctf}^b(X,\cE,\Lambda)$.
For every algebraic geometric point $\overline{s}\to S$, the complex $\cK|_{X_{\overline{s}}}$ has log conductors bounded by $i_{\overline{s}}^*\cE$ where $i_{\overline{s}} : X_{\overline{s}} \to X$ is the canonical morphism.

\end{prop}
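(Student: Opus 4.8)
The plan is to reduce the statement to a base-field extension property and then to a spreading-out argument on curves. Write $\Omega := \kappa(\overline{s})$, a perfect (indeed algebraically closed) field extension of $k$, and factor the canonical morphism $i_{\overline{s}}$ as the composite $X_{\overline{s}} \xrightarrow{\iota} X_\Omega \xrightarrow{\pr} X$, where $X_\Omega := X\times_k \Omega$ and where $\iota$ is the closed immersion identifying $X_{\overline{s}}$ with the fibre of $f_\Omega : X_\Omega\to S_\Omega := S\times_k\Omega$ over the $\Omega$-rational point of $S_\Omega$ determined by $\overline{s}$. Since $\iota$ is a morphism of schemes of finite type over the perfect field $\Omega$, \cref{many_properties}-\ref{pullback} applied over $\Omega$ reduces the proposition to showing that $\pr^*\cK = \cK_\Omega$ has log conductors bounded by $\pr^*\cE = \cE_\Omega$ over the base field $\Omega$. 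In other words, it suffices to prove that the formation of $D^b_{ctf}(X,\cE,\Lambda)$ is compatible with the extension of perfect base fields $k\hookrightarrow\Omega$, all the paper's constructions being valid over any perfect field.

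To prove this I would unwind \cref{bounded_conductor}: given a smooth curve $C$ over $\Omega$ and a morphism $h : C\to X_\Omega$ over $\Omega$, set $\psi := \pr\circ h : C\to X$, a $k$-morphism, and bound $LC(\cH^i\psi^*\cK)\le T(\psi^*\cE)$ on the normal curve $C$. As $(C,\psi)$ is of finite presentation, it spreads out: there is an integral scheme $B$ of finite type over $k$, smooth after shrinking, with $k(B)\subset\Omega$, a smooth morphism $\pi : \mathcal{C}\to B$ of relative dimension $1$ and a $k$-morphism $\Psi : \mathcal{C}\to X$ whose base change along $\Spec\Omega\to B$ recovers $(C,\psi)$. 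Because the logarithmic conductor and the torsion divisor are insensitive to extending the residue field of $B$ to an algebraic closure (by \cref{same_cRE}, \cref{flat_pullback} and the invariance of the conductor under algebraically closed residue extensions), it is enough to prove the inequality on the geometric generic fibre $\mathcal{C}_{\overline{\eta}}$ of $\pi$, for the sheaves $\mathcal{F}^i := \cH^i\Psi^*\cK$ and the coherent sheaf $\mathcal{G} := \Psi^*\cE$.

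The point of this reduction is that the closed fibres of $\pi$ are genuine curves over $k$. Indeed, for a closed point $t\in B$ the residue field $\kappa(t)/k$ is finite and separable since $k$ is perfect, so $\mathcal{C}_t$ is smooth of dimension $1$ over $k$, and $\Psi|_{\mathcal{C}_t} : \mathcal{C}_t\to X$ is a $k$-morphism. Since pullback of étale sheaves is exact, $\cH^i(\Psi|_{\mathcal{C}_t})^*\cK = \mathcal{F}^i|_{\mathcal{C}_t}$, so the hypothesis $\cK\in D^b_{ctf}(X,\cE,\Lambda)$ yields $LC(\mathcal{F}^i|_{\mathcal{C}_t})\le T(\mathcal{G}|_{\mathcal{C}_t})$ for every closed $t$. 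I would then compare the two sides along each horizontal (i.e. dominant over $B$) prime divisor $\mathcal{Z}\subset\mathcal{C}$, which after shrinking $B$ I may assume to be finite étale and relatively Cartier over $B$. On the conductor side, \cref{inequality_LC} applied to $\pi$ near $\mathcal{Z}$ bounds $m_{\mathcal{Z}_{\overline{\eta}}}(LC(\mathcal{F}^i|_{\mathcal{C}_{\overline{\eta}}}))$ by the generic conductor of $\mathcal{F}^i$ along $\mathcal{Z}$, while \cref{Hu_generic_swan_via_curve} identifies this generic conductor with the value $m_{\mathcal{Z}_t}(LC(\mathcal{F}^i|_{\mathcal{C}_t}))$ realised by a general closed fibre. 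On the torsion side, generic flatness together with \cref{dominant_by_constructible_max_multi} and \cref{flat_pullback} makes $t\mapsto m_{\mathcal{Z}_t}(T(\mathcal{G}|_{\mathcal{C}_t}))$ constant on a dense open, equal to its value on $\mathcal{C}_{\overline{\eta}}$. Chaining these with the closed-fibre inequality for a general $t$ gives the bound along $\mathcal{Z}_{\overline{\eta}}$, and summing over $\mathcal{Z}$ yields the inequality on $\mathcal{C}_{\overline{\eta}}$.

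The main obstacle is precisely this last step: transferring the bound from the closed fibres to the geometric generic fibre. The delicate part is to arrange, by shrinking $B$, all the hypotheses needed to invoke \cref{inequality_LC}, \cref{Hu_generic_swan_via_curve} and \cref{dominant_by_constructible_max_multi} simultaneously, namely the smoothness of $\pi$, the relative Cartier and smoothness conditions on the non-lisse locus of each $\mathcal{F}^i$ and on the torsion support of $\mathcal{G}$, and the flatness of the horizontal divisors. Once this geometry is in place the conductor side is controlled by semicontinuity and the torsion side by constructibility, and the two are forced to match on a dense open of the base.
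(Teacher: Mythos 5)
Your opening reduction (factoring $i_{\overline{s}}$ through $X_\Omega$) and your spreading-out of $(C,\psi)$ to a smooth relative curve $\pi : \mathcal{C}\to B$ with a $k$-morphism $\Psi : \mathcal{C}\to X$ track the paper's proof closely; the paper performs the same spreading out over a base $T$ quasi-finite flat over $S$. The genuine gap is in your transfer from closed fibres to the geometric generic fibre. You assert that \cref{Hu_generic_swan_via_curve} ``identifies'' the generic logarithmic conductor $\lc_{\mathcal{Z}}(\mathcal{F}^i)$ with $m_{\mathcal{Z}_t}(LC(\mathcal{F}^i|_{\mathcal{C}_t}))$ for a general closed point $t\in B$. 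It does not: that theorem expresses $\lc_{\mathcal{Z}}$ as a \emph{supremum} over all immersed curves with arbitrary contact order $m_x(f^*D)\geq 1$, and this supremum is in general neither attained nor even approached by transversal curves --- yet once you shrink $B$ so that $\mathcal{Z}\to B$ is finite étale, the fibres $\mathcal{C}_t$ meet $\mathcal{Z}$ transversally. Concretely, take $D=\{x=0\}$ in $\bA^2_{x,y}$ and $\cL$ the Artin--Schreier sheaf defined by $t^p-t=y/x^p$: since $y$ is not a $p$-th power in the residue field $k(y)$ at the generic point of $D$, one has $\lc_D(j_!\cL)=p$, while every transversal curve $y=u(x)$ yields $\lc_x\leq p-1$, and the curves $(x,y)=(s^m,s)$ give ratios $p-1/m$, so the supremum $p$ is realised only in the limit over tangential curves. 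Thus $\sup_t m_{\mathcal{Z}_t}(LC(\mathcal{F}^i|_{\mathcal{C}_t}))$ can be strictly smaller than $\lc_{\mathcal{Z}}(\mathcal{F}^i)$, and your chain of inequalities does not close. (The attainment-by-generic-transversal-curves statement, \cref{equality_DT}, exists in the paper only for the \emph{non-logarithmic} conductor $C$, and even there under a condition on $SS$; no logarithmic analogue is available here.)

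The repair is to use the hypothesis at full strength on the total space instead of only on closed fibres, which is exactly what the paper does: after the dévissage $\cK=j_!\cL$ up to skyscraper corrections on the generic fibre (a point you also gloss), the bound $\Psi^*\cK\in D^b_{ctf}(\mathcal{C},\Psi^*\cE,\Lambda)$ quantifies over \emph{all} $k$-curves mapping to $\mathcal{C}$, tangential ones included, so \cref{bounded_ramification_ex}-(1) --- which applies \cref{Hu_generic_swan_via_curve} in the correct, sup-\emph{bounding} direction --- yields the divisorial inequality $LC(j_!\cL)\leq T(\Psi^*\cE)$ on $\mathcal{C}$ itself. One then concludes with the semicontinuity statement \cref{inequality_LC} (after arranging, via \cite[0550]{SP}, that the chosen divisor is étale over the base, as you partly anticipate) together with $i_{\overline{\eta}}^*\,T(\Psi^*\cE)=T(i_{\overline{\eta}}^*\Psi^*\cE)$ from \cref{flat_pullback_reg}, since $i_{\overline{\eta}}$ is flat with regular source and target. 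This also makes your per-$t$ constancy argument on the torsion side unnecessary; note in passing that \cref{dominant_by_constructible_max_multi} concerns the maximal multiplicity $\chi_{\cE}$, not the multiplicity along a prescribed horizontal divisor, so it was in any case the wrong tool for that step.
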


\begin{proof}
We can suppose that $\cK$ is concentrated in degree 0.
Let $s\in S$ be the point over which $\overline{s}$ is localized and let $S'$ be a smooth connected open subset  of $\overline{\{s\}}\subset S$.
Consider the  commutative diagram with cartesian squares
$$
\xymatrix{
X_{\overline{s}}\ar[d] \ar[r]^{i_{\overline{s}}'}&X'\ar[d]^{f'}      \ar[r]^{h} &X\ar[d]^{f} \\
\overline{s}\ar[r]&S'  \ar[r] & S      \ .
}
$$
By \cref{many_properties}-\ref{pullback},  the sheaf $\cK|_{X'}$ has log conductors bounded by $h^*\cE$.
Hence, at the cost of replacing $f : X\to S$ by $f' : X'\to S'$ and $\cK$ by $\cK|_{X'}$, we can suppose that $S$ is smooth connected and that $\sbar$  is localized at the generic point of $S$.
Let $C$ be a smooth curve over $\sbar$ and let $h : C \to X_{\sbar}$  be a morphism over $\sbar$.
We want to show that 
$$
LC(\cK|_{C}) \leq T(h^* i_{\sbar}^*\cE) \ .
$$
By writing  $\sbar$   as a filtered limit of smooth connected varieties over $k$  quasi-finite flat over $S$,  there is a spreading out
$$
	\begin{tikzcd}
	           C               \arrow{r}{\iota_{\sbar}}    \arrow{d}{h}    &  \cC      \arrow{d} \arrow{rd}{\alpha}       &     \\
	          X_{\sbar}                 \arrow{r} \arrow{d} &   X_T \arrow{d}    \arrow{r}  & X \arrow{d}\\
                 \sbar       	 \arrow{r}{ } &       T        \arrow{r}{ }      & S
	\end{tikzcd}
$$
with cartesian squares where $T\to S$ is quasi-finite flat with $T$ smooth connected over $k$, and where $\cC \to T$ is a smooth relative curve.
Since $\cK|_{\cC}$ has log conductors bounded by $\alpha^*\cE$, it is enough to show that if $f : X\to S$ is a smooth relative curve  with $S$ smooth connected over $k$ and if $\sbar\to S$ is localized at the generic point of $S$, we have
$$
LC(\cK|_{X_{\sbar} }) \leq T(i_{\sbar}^*\cE) \ .
$$
Observe that $X$ is smooth over $k$.
Choose a reduced effective Cartier divisor $D\subset X$ containing the singular locus of $\cK$.
At the cost of shrinking $S$, we can suppose that $D$ is flat over $S$.
Put $U:=X-D$ and let $j : U\hookrightarrow X$ be the inclusion.
Note that $(j_!(\cK|_U))|_{X_{\sbar}}$ and $\cK|_{X_{\sbar}}$ are isomorphic away from a finite number of points of the smooth curve $X_{\sbar}$.
Hence, at the cost of replacing $\cK$ by $j_!(\cK|_U)$, we can suppose that $\cK$ is of the form $j_!\cL$ where $\cL\in \Loc_{tf}(U,\Lambda)$.
By \cite[0550]{SP} there is a commutative diagram with cartesian squares 
$$
	\begin{tikzcd}
	          X_{\sbar}                 \arrow{r} \arrow{d} &   X_T \arrow{d}    \arrow{r}  & X \arrow{d}\\
                \sbar      	 \arrow{r}{ } &       T        \arrow{r}{ }      & S
	\end{tikzcd}
$$
where $T\to S$ is quasi-finite flat with $T$ smooth connected over $k$ and where the generic fibre of $(T\times_S D)^{\red}\to T$ is geometrically reduced.
At the cost of pulling back the situation to $T$ and replacing $D$ by $(T\times_S D)^{\red}$, we can assume that $D_{\sbar}$ is reduced.
Since $D_{\sbar}$ is a finite set of points over $\sbar$, we deduce that $D_{\sbar}$ is étale over $\sbar$.
At the cost of shrinking $S$ we can thus suppose that the induced map $D\to S$ is étale.
Hence, we deduce

$$
 LC((j_!\cL)|_{X_{\sbar}}) \leq i_{\sbar}^*  LC(j_!\cL) \leq  i_{\sbar}^*T(\cE) = T(i_{\sbar}^*\cE) 
$$
where the first inequality follows from \cref{inequality_LC} and the second inequality follows from \cref{bounded_ramification_ex}.
Since $\sbar$ is localized at the generic point of $S$, the map $i_{\sbar}  : X_{\sbar} \to X$ is flat. 
On the other hand, $X$ and  $X_{\sbar}$ are regular. 
Thus the last equality follows from \cref{flat_pullback_reg}.
\end{proof}

\section{Conductor and finite direct image: global case}

\begin{lemma}\label{bound_direct_image_dim_1}
Let $f: Y\to X$ be a finite surjective morphism between smooth curves over $k$.
Let $D$ be a divisor on $X$ and let $x\in D$. 
Put $E=f^{-1}(D)$  and $j : V:=Y-E\hookrightarrow Y$.
Then, for every $\cL\in \Loc_{tf}(V,\Lambda)$, we have 
$$
\lc_x(f_{*} j_!\cL)\leq \max\{\lc_x(f_{*} j_!\Lambda),\lc_y( j_!\cL),y\in f^{-1}(x)\} \ .
$$
\end{lemma}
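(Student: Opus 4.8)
The plan is to reduce the statement to a purely local computation at the point $x$ and then to apply the co-induction machinery from Section 2, in particular \cref{slope_induction_final}. The key observation is that finite pushforward along $f$ corresponds, at the level of Galois representations at $x$, to co-induction. So let me set up the local picture first.

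Let $K$ be the fraction field of $\hat{\sO}_{X,x}$, and let $y_1,\dots,y_r$ be the points of $f^{-1}(x)$, with $K_j$ the fraction field of $\hat{\sO}_{Y,y_j}$. Since $f$ is finite, $K_j/K$ is a finite extension for each $j$, and the stalk of $f_* j_! \cL$ at $x$, viewed as a $G_K$-representation, decomposes as $\bigoplus_{j} \CoInd^{G_{K_j}}_{G_K}(M_j)$ where $M_j = (j_!\cL)|_{\Spec K_j}$ is the $G_{K_j}$-module attached to $\cL$ near $y_j$. \textbf{First} I would record this decomposition carefully, using that $f$ is finite flat over the complement of the branch locus and that $\lc_x$ only depends on the restriction to $\Spec K$. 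The logarithmic conductor of a direct sum is the maximum of the logarithmic conductors of the summands, so it suffices to bound each $\lc_K(\CoInd^{G_{K_j}}_{G_K}(M_j))$ by the right-hand side.

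\textbf{Next}, to each summand I would apply \cref{slope_induction_final}, which gives
$$
\lc_K\bigl(\CoInd^{G_{K_j}}_{G_K}(M_j)\bigr) \leq \max\bigl(\lc_{K_j/K},\, \lc_{K_j}(M_j)\bigr) \ .
$$
The second term $\lc_{K_j}(M_j)$ is exactly $\lc_{y_j}(j_!\cL)$, which is bounded by $\max_y \lc_y(j_!\cL)$ as desired. The first term $\lc_{K_j/K}$ is, by definition, $\lc_K(\CoInd^{G_{K_j'}}_{G_K}(\Lambda))$ where $K_j'$ is the separable closure of $K$ in $K_j$; this is precisely the contribution of the constant sheaf, and I would identify it with (a summand of) $\lc_x(f_* j_!\Lambda)$. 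Indeed, running the same stalk decomposition for the constant sheaf $\Lambda$ on $V$ shows that $\lc_x(f_* j_!\Lambda) = \max_j \lc_{K_j/K}$, since the trivial $G_{K_j}$-module co-induces to the quantity $\lc_{K_j/K}$. Taking the maximum over $j$ of both terms then yields the claimed inequality.

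\textbf{The main obstacle} I anticipate is the bookkeeping in the stalk decomposition: one must verify that the geometric stalk $(f_* j_!\cL)_{\bar x}$, as a $G_K$-representation after restriction to $\Spec K$, really is the direct sum of co-inductions over the points $y_j$ above $x$, with $\CoInd$ taken along the inclusions $G_{K_j} \hookrightarrow G_K$ coming from the henselizations. This requires a compatibility between the decomposition of $\Spec(\hat{\sO}_{X,x} \otimes_{\sO_X} \sO_Y)$ into local factors at the $y_j$ and the group-theoretic co-induction, which is standard but needs the residue field $k(x)$ to be handled correctly. Since $k$ is perfect the residue fields are perfect, so \cref{slope_induction_final} applies without difficulty. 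A secondary point is to ensure that $j_! \cL$ restricted near each $y_j$ genuinely gives a $G_{K_j}$-module with $\lc_{K_j}(M_j) = \lc_{y_j}(j_!\cL)$, which is immediate from \cref{conductor_divisor_def} once the local identification is in place.
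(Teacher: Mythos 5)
Your proposal is correct and takes essentially the same route as the paper: the paper's entire proof of this lemma is the one line ``this is a geometric rephrasing of \cref{slope_induction_final}'', and your argument --- decomposing $(f_*j_!\cL)|_{\Spec K}$ as $\bigoplus_j \CoInd^{G_{K_j}}_{G_K}(M_j)$ (with co-induction along $G_{K_j}\xrightarrow{\sim}G_{K_j'}\subset G_K$ to handle possible inseparability), bounding each summand by \cref{slope_induction_final}, and identifying $\max_j \lc_{K_j/K}$ with $\lc_x(f_*j_!\Lambda)$ via the same decomposition for the constant sheaf --- is precisely that rephrasing made explicit. The perfectness of the residue fields (needed for \cref{slope_induction_final}) holds as you note since $k$ is perfect, so there is no gap.
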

\begin{proof}
This is a geometric rephrasing of \cref{slope_induction_final}.
\end{proof}

\begin{definition}
Let $f:Y\to X$ be a finite surjective morphism of normal schemes of finite type over $k$. 
Let $D$ be an irreducible divisor on $X$ with generic point $\eta$ and $E$ an irreducible component of $(D\times_XY)^{\red}$ with generic point $\xi$.
We denote by $f(E/D)$ the degree of $k(\xi)/k(\eta)$, by $e(E/D)$ the ramification index of the extension of discrete valuation rings $\mathscr O_{Y,\xi}/\mathscr O_{X,\eta}$,  by $f(E/D)^{\mathrm{s}}$ the separable degree of $k(\xi)/k(\eta)$ and by $f(E/D)^{\mathrm{ins}}$ its purely inseparable degree.
%Let $k(\eta)\to L\to k(\xi)$ be a middle field such that $L/k(\eta)$ is separable and $k(\xi)/L$ is purely inseparable. We denote by $f(E/D)^{\mathrm{s}}$ the extension degree of $L/k(\eta)$ and by $f(E/D)^{\mathrm{ins}}$ the extension degree of $k(\xi)/L$.
\end{definition}

\begin{lemma}\label{multiplicity bound}
Let $f:Y\to X$ be a finite surjective morphism of smooth schemes over $k$. 
Let $D$ be a smooth irreducible effective Cartier divisor on $X$ and put $U:=X-D$.
Assume that $E:=(D\times_X Y)^{\red}$ is  smooth irreducible and put $V:=Y-E$.
Let $\iota:C\to X$ be an immersion from a smooth connected curve over $k$ meeting $D$ at a unique closed point $x$.
Let $\iota' : C'\to Y$ be the normalization of $C\times_XY$. 
Then, for every closed point $x'\in C'$ lying over $x$, we have 
$$
m_{x'}(\iota'^*E) \leq   (D,C)_x \cdot  f(E/D)  \ .
$$
\end{lemma}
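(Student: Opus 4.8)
The plan is to reduce the inequality to the fundamental identity $e(E/D)\,f(E/D)=\deg f$ read off along the generic point $\eta$ of $D$, combined with a purely local comparison of pullback multiplicities on the curve $C'$.

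First I would record two structural facts. Since $X$ and $Y$ are smooth of the same dimension and $f$ is finite surjective, miracle flatness shows that $f$ is finite flat of some degree $d$, and hence so is its base change $C\times_X Y\to C$. Because $E=(D\times_XY)^{\red}$ is irreducible, its generic point $\xi$ is the unique point of $Y$ over $\eta$, and as $Y$ is normal the local ring $\cO_{Y,\xi}$ is a discrete valuation ring. The usual length computation for a finite flat algebra over the discrete valuation ring $\cO_{X,\eta}$ then gives
$$
e(E/D)\,f(E/D)=d=\deg f \ ,
$$
and, reading off multiplicities along the unique component $E$, the equality of Cartier divisors $f^*D=e(E/D)\cdot E$.

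The heart of the argument is a local identity at $x'$. Let $\pi\colon C'\to C$ be the projection, i.e.\ the normalization $C'\to C\times_X Y$ followed by the projection to $C$; the square is commutative, so $f\circ\iota'=\iota\circ\pi$. Let $y=\iota'(x')\in E$, let $t\in\cO_{Y,y}$ be a local equation for the smooth divisor $E$, and let $s$ be a local equation for $D$ at $x$. From $f^*D=e(E/D)\cdot E$ we get $f^{\#}s=u\,t^{e(E/D)}$ with $u$ a unit, so pulling back to the discrete valuation ring $\cO_{C',x'}$ and taking valuations yields $v_{x'}(\iota'^{\#}f^{\#}s)=e(E/D)\cdot m_{x'}(\iota'^*E)$. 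On the other hand $\iota'^{\#}f^{\#}s=\pi^{\#}\iota^{\#}s$, and since $C\not\subset D$ we have $\iota^*s\neq 0$ with $v_x(\iota^*s)=(D,C)_x$, whence $v_{x'}(\pi^{\#}\iota^{\#}s)=e_\pi(x'/x)\cdot (D,C)_x$, where $e_\pi(x'/x)$ is the ramification index of $\pi$ at $x'$. Comparing gives
$$
e(E/D)\cdot m_{x'}(\iota'^*E)=e_\pi(x'/x)\cdot (D,C)_x \ .
$$

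It then remains to bound $e_\pi(x'/x)$. As $C\times_X Y$ is a curve finite over $C$, each of its components dominates $C$, so $x'$ lies on a component of $C'$ that is finite surjective over $C$ of degree at most $\deg(C\times_X Y/C)=d$; since for a finite map of smooth curves the ramification index at a point is at most the degree, we obtain $e_\pi(x'/x)\le d=e(E/D)\,f(E/D)$. Substituting into the displayed identity yields $m_{x'}(\iota'^*E)\le f(E/D)\,(D,C)_x$. I expect the main obstacle to be the careful bookkeeping in the local identity rather than any deep point: one must use the irreducibility of $E$ to know $f^*D=e(E/D)E$ holds locally at $y$, the non-inclusion $C\not\subset D$ to guarantee $\iota^*s\neq 0$, and the fact that normalization does not increase the generic degree of $C\times_X Y$ over $C$.
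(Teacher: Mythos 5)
Your proof is correct and takes essentially the same route as the paper's: both establish the identity $e(E/D)\,f(E/D)=[K(Y):K(X)]$ (you via miracle flatness and a length computation over the discrete valuation ring $\cO_{X,\eta}$, the paper via Zariski's main theorem and Serre's Proposition I.10), both derive $m_{x'}(\iota'^*E)=e(x'/x)\cdot (D,C)_x/e(E/D)$ from $f^*D=e(E/D)\cdot E$ and the commutativity $f\circ\iota'=\iota\circ\pi$, and both bound the ramification index $e(x'/x)$ by the degree using flatness (you after base change to $C$, the paper via $e(x'/x)\leq [K(S):K(C)]\leq [K(Y):K(X)]$ with $V\to U$ finite flat). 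The only cosmetic differences are that you state the local identity as an exact equality via local equations where the paper writes the same content as a chain of multiplicity computations, and that the paper handles possibly disconnected $Y$ by explicitly discarding components not containing $E$, a reduction your argument implicitly absorbs into the uniqueness of the point over $\eta$.
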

\begin{proof}
At the cost of throwing away the components of $Y$ not containing $E$, we can suppose that $Y$ is connected.
Let $\eta$ be the generic point of $D$ and let $\xi$ be the generic point of $E$.
By Zariski's main theorem,  the normalization of $\mathcal{O}_{X,\eta}$ in $K(Y)$ is $\mathcal{O}_{Y,\xi}$.
In particular,  \cite[Proposition I.10]{CL} gives
$$
[K(Y):K(X)]=f(E/D)\cdot e(E/D) \ .
$$
Let $S \subset C'$ be the irreducible component containing $x'$.
Then, 
$$
e(x'/x) \leq [K(S):K(C)] \leq [K(Y):K(X)] 
$$
where the first inequality comes from \cite[Proposition I.10]{CL}  and the second   from the fact that  $V\to U$ is finite \textit{flat} as finite surjective morphism with smooth base and target.
Consider the following commutative diagram 
$$
\xymatrix{\relax
E\ar[d] \ar[r]&Y\ar[d]_f&C'\ar[d]^{f_C}\ar[l]_-(0.5){\iota'} \\
D\ar[r]&X&\ar[l]^-(0.5){\iota}C     \ .
}
$$
Then, 
\begin{align*}
m_{x'}(\iota'^*E)&=\frac{1}{e(E/D)}\cdot m_{x'}\left(\iota'^*f^*D\right)=\frac{1}{e(E/D)}\cdot m_{x'}\left(f_C^*\iota^*D\right)=\frac{(D,C)_x }{e(E/D)}\cdot m_{x'}\left(f_C^*x\right)\\
&=(D,C)_x \cdot\frac{e(x'/x)}{e(E/D)}\leq  (D,C)_x \cdot \frac{[K(Y):K(X)]}{e(E/D)}=(D,C)_x\cdot f(E/D).
\end{align*}
\end{proof}

\begin{lemma}\label{FDILocal_irreducible_radicial_log}
Let $f:Y\to X$ be a finite surjective morphism of smooth schemes over $k$. 
Let $D$ be a smooth effective Cartier irreducible divisor on $X$.
Assume that $E:=(D\times_X Y)^{\red}$ is smooth irreducible.
Put $U:= X-D$ and $j : V:=Y-E\hookrightarrow Y$.
Assume that the restriction $f_0:V\to U$ is étale. 
For $\cL\in \Loc_{tf}(V,\Lambda)$, we have 
$$
\lc_D(f_{*}j_!\cL)\leq \max\{\lc_D(f_{*}j_!\Lambda), f(E/D)\cdot \lc_{E}(j_!\cL)\}.
$$
\end{lemma}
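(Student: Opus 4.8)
The strategy is to detect the generic logarithmic conductor along $D$ by a family of curves and to reduce, on each curve, to the one-dimensional bound \cref{bound_direct_image_dim_1}. First I would record that, since $V = U\times_X Y$ and $f$ is finite, proper base change ($f_*=f_!$) gives $f_*j_!\cL = j_{U,!}f_{0*}\cL$ with $j_U:U\hookrightarrow X$ and $f_{0*}\cL\in\Loc_{tf}(U,\Lambda)$ (as $f_0:V\to U$ is finite étale), and likewise $f_*j_!\Lambda = j_{U,!}f_{0*}\Lambda$. Both pushforwards are thus extensions by zero of locally constant sheaves, so \cref{Hu_generic_swan_via_curve} applies to them along the smooth irreducible divisor $D$:
\[
\lc_D(f_*j_!\cL) = \sup \frac{\lc_x\big((f_*j_!\cL)|_C\big)}{m_x(\iota^*D)},
\]
the supremum ranging over immersions $\iota:C\to X$ of smooth curves meeting $D$ at a single closed point $x$. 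It therefore suffices to bound each such quotient by $\max\{\lc_D(f_*j_!\Lambda),\,f(E/D)\lc_E(j_!\cL)\}$.

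Fix such an $\iota:C\to X$ and let $\iota':C'\to Y$ be the normalization of $C\times_X Y$, with induced finite surjective map $f_C:C'\to C$ between smooth curves (a normal curve over the perfect field $k$ being smooth). Over $C\cap U$ the scheme $C\times_X Y$ is finite étale over $C$, hence already normal, so the normalization is an isomorphism there; consequently $\iota'^*j_!\cL = j'_!\cL'$ with $j':V'\hookrightarrow C'$, $V':=f_C^{-1}(C\cap U)$ and $\cL'\in\Loc_{tf}(V',\Lambda)$. The key local identification I would establish is
\[
\lc_x\big((f_*j_!\cL)|_C\big)=\lc_x\big(f_{C*}j'_!\cL'\big),
\]
and the same with $\cL$ replaced by $\Lambda$. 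Indeed the logarithmic conductor at $x$ depends only on the $G_K$-representation of the restriction to $\Spec K$, where $K$ is the fraction field of the completion of $\cO_{C,x}$, and by proper base change together with the identification of the points $x'$ of $C'$ above $x$ with the local extensions $K_{x'}/K$, this representation is $\bigoplus_{x'/x}\CoInd^{G_{K_{x'}}}_{G_K}(M_{x'})$ on both sides; this is precisely the computation underlying \cref{bound_direct_image_dim_1}.

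Applying \cref{bound_direct_image_dim_1} to $f_C:C'\to C$ with the divisor $\iota^*D$ then yields
\[
\lc_x\big(f_{C*}j'_!\cL'\big)\le \max\Big\{\lc_x\big(f_{C*}j'_!\Lambda\big),\ \lc_{x'}(j'_!\cL')\ :\ x'/x\Big\}.
\]
For the first term, the identification above and \cref{Hu_generic_swan_via_curve} on $X$ give $\lc_x(f_{C*}j'_!\Lambda)=\lc_x((f_*j_!\Lambda)|_C)\le m_x(\iota^*D)\,\lc_D(f_*j_!\Lambda)$. For each remaining term I would invoke the logarithmic semicontinuity \cref{semi_continuity} for $\iota':C'\to Y$ (legitimate since $C'$ and $Y$ are smooth, $\iota'^*E$ is an effective Cartier divisor, and the smooth $E$ has normal crossings, the passage from a finite field to $\Lambda$ being handled by \cref{slope_tensor}), obtaining $\lc_{x'}(j'_!\cL')\le m_{x'}(\iota'^*E)\,\lc_E(j_!\cL)$, and then \cref{multiplicity bound} to bound $m_{x'}(\iota'^*E)\le (D,C)_x\,f(E/D)=m_x(\iota^*D)\,f(E/D)$. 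Dividing by $m_x(\iota^*D)$ and taking the supremum over all curves concludes.

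I expect the main obstacle to be the local identification of the second paragraph: because $C\times_X Y$ is not normal and the square with $C'$ is not cartesian, one cannot directly quote smooth or proper base change for $\iota'$. The clean route is to run base change through $C\times_X Y$ (where it does hold) and then use that normalization matches the branches over $x$ with the local field extensions $K_{x'}/K$, so that the coinduced $G_K$-representations, and hence the logarithmic conductors at $x$, coincide. Secondary care is needed to verify the hypotheses of the cited semicontinuity and curve statements, namely the reduction to a finite field and the normal crossings property of the smooth divisor $E$.
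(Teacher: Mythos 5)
Your proof is correct and follows essentially the same route as the paper's: detect $\lc_D$ via \cref{Hu_generic_swan_via_curve}, pass to the normalization $C'$ of $C\times_X Y$ (where étaleness of $f_0$ makes the square over $C_0$ cartesian, so proper base change identifies $\iota^*f_*(j_!\cL)$ with $f_{C*}\iota'^*(j_!\cL)$), and combine \cref{bound_direct_image_dim_1}, \cref{semi_continuity} and \cref{multiplicity bound} before dividing by $(D,C)_x$ and taking the supremum. The only, harmless, deviations are that you make explicit the identification $f_*j_!\cL \simeq j_{U!}f_{0*}\cL$ needed to legitimize \cref{Hu_generic_swan_via_curve}, and that you bound the term $\lc_x\bigl(f_{C*}j'_!\Lambda\bigr)$ by the curve-sup formula of \cref{Hu_generic_swan_via_curve} where the paper instead invokes \cref{semi_continuity}.
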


\begin{proof}
Let $\iota:C\to X$ be an immersion from a smooth curve over $k$ meeting $D$ at only one point $x$.
Put $C_0:=C-\{x\}$ and let $C'$ be the normalization of $C\times_XY$. 
Put  $C'_0:= C' - (x\times_CC')$.
We have the following commutative diagrams
$$
\xymatrix{\relax
E\ar[d]_{f_D}\ar[r]&Y\ar[d]_f&C'\ar[d]^{f_C}\ar[l]_-(0.5){\iota'}& &V\ar[d]_{f_0}&C'_0\ar[d]^{f_{C_0}}\ar[l]_-(0.5){\iota'_0}\\
D\ar[r]&X&\ar[l]^-(0.5){\iota}C& &U&C_0\ar[l]^-(0.5){\iota_0}}
$$
Since $f_0:V\to U$ is étale and since smoothness descends along étale morphisms,  the scheme $V\times_U C_0$ is smooth over $k$.
Hence,  the right square above is cartesian.
By proper base change, we deduce
$$
\iota^*_0 f_{0*}  (\cL)  \simeq f_{C_{0}*}\iota'^*_0(\cL) \ .
$$
Hence, we deduce that
$$
\iota^* f_{*} ( j_!\cL)  \simeq f_{C*}\iota'^*(j_!\cL) \ .
$$   
On the other hand,
\begin{align*}
\lc_{x}(f_{C*}\iota'^*(j_!\cL) ) &\leq \max\left\{ \lc_x(f_{C*}\iota'^*(j_!\Lambda) ), \lc_{x'}(\iota'^*j_!\cL),x'\in f_C^{-1}(x)  \right\}     & \text{\cref{bound_direct_image_dim_1}} \\
  &   \leq   \max\left\{ lc_x(f_{C*}\iota'^*(j_!\Lambda) ), m_{x'}(\iota'^*E)\cdot \lc_{E}(j_!\cL) ,x'\in f_C^{-1}(x)  \right\}           &  \text{\cref{semi_continuity}}    \\
                                           &    \leq   \max\left\{ \lc_x(\iota^* f_{*} (j_!\Lambda) ), (D,C)_x \cdot   f(E/D)\cdot \lc_{E}(j_!\cL)   \right\}     & \text{\cref{multiplicity bound}}
\end{align*}
Thus, 
\begin{align*}
\frac{\lc_{x}(\iota^* f_{*} ( j_!\cL)  )}{(D,C)_x} & \leq \max\left\{ \frac{\lc_x(\iota^* f_{*} (j_!\Lambda) )}{(D,C)_x},      f(E/D)\cdot \lc_{E}(j_!\cL) \right\}    &   \\
     &  \leq \max\left\{ \lc_{D}(f_{*} ( j_!\Lambda ) ),  f(E/D)\cdot \lc_{E}(j_!\cL) \right\}  &    \text{\cref{semi_continuity}}  
\end{align*} 
Applying \cref{Hu_generic_swan_via_curve} concludes the proof of \cref{FDILocal_irreducible_radicial_log}.
\end{proof}

\begin{lemma}\label{FDILocal_irreducible_radicial}
Let $f:Y\to X$ be a finite surjective morphism of smooth schemes over $k$. 
Let $D$ be a smooth irreducible effective Cartier divisor on $X$ and put $U:= X-D$.
Assume that $E:=(D\times_X Y)^{\red}$ is  smooth irreducible and put $j : V:=Y-E\hookrightarrow Y$.
Assume that the restriction $f_0:V\to U$ is étale. 
For every $\cL\in \Loc_{tf}(V,\Lambda)$,  we have 
$$
c_D(f_{*}j_!\cL)\leq \max\{ c_D(f_{*}j_!\Lambda), f(E/D)\cdot \mathrm c_{E}(j_!\cL)\} \ .
$$
\end{lemma}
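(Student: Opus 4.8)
The plan is to follow the proof of \cref{FDILocal_irreducible_radicial_log} almost verbatim, replacing the logarithmic conductor by the non-logarithmic one and substituting the non-logarithmic version of each ingredient. The one place where the two arguments genuinely diverge is the final reduction to the generic point of $D$: in the logarithmic case one concludes via the supremum formula \cref{Hu_generic_swan_via_curve}, but no such formula is quoted for $c$, and the residue field $k(D)$ is imperfect, so one \emph{cannot} simply add $1$ to the logarithmic statement to recover the non-logarithmic one at the generic point of $D$. Instead I would detect $c_D$ through a single well-chosen curve via \cref{equality_DT}. Throughout I use that $f$ finite gives $f_{\ast}j_!\cL\simeq j'_!(f_{0\ast}\cL)$ with $j' : U\hookrightarrow X$ the inclusion and $f_{0\ast}\cL\in\Loc_{tf}(U,\Lambda)$ lisse, so that \cref{equality_DT} and \cref{semi_continuity} apply to $f_{\ast}j_!\cL$.

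First I would record the non-logarithmic analogue of \cref{bound_direct_image_dim_1}: for a finite surjective morphism $g : Y'\to X'$ of smooth curves over $k$, a point $x$ on $X'$, $E'=g^{-1}(x)$ and $\cM\in\Loc_{tf}(X'-E',\Lambda)$, one has
\[
c_x(g_{\ast}j_!\cM)\leq\max\{c_x(g_{\ast}j_!\Lambda),\,c_{y}(j_!\cM),\,y\in g^{-1}(x)\}.
\]
This follows at once from \cref{bound_direct_image_dim_1} together with \cref{inequalityLogNonLog}: the residue field at each closed point of a smooth curve over the perfect field $k$ is a finite extension of $k$, hence perfect, so at $x$ and at every $y$ the relation $c=\lc+1$ holds for the (nonzero) Galois modules involved. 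Adding $1$ to the logarithmic inequality and using $\max(a,b)+1=\max(a+1,b+1)$ then gives the displayed bound.

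With this in hand I would choose the curve. Applying \cref{equality_DT} to the sheaf $f_{\ast}j_!\cL=j'_!(f_{0\ast}\cL)$ on $X$ produces a dense open $\Omega\subset D$ such that any immersion $\iota : C\to X$ from a smooth curve meeting $D$ transversely at a single point $x\in\Omega$ (and transverse to the relevant singular support) satisfies $c_x(\iota^{\ast}f_{\ast}j_!\cL)=c_D(f_{\ast}j_!\cL)$, since then $(D,C)_x=1$. Fix such a $C$, let $\iota' : C'\to Y$ be the normalization of $C\times_XY$ and $f_C : C'\to C$ the projection, a finite surjective morphism of smooth curves. Exactly as in \cref{FDILocal_irreducible_radicial_log}, the étaleness of $f_0$ makes the relevant square cartesian and proper base change gives $\iota^{\ast}f_{\ast}j_!\cL\simeq f_{C\ast}\iota'^{\ast}j_!\cL$. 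I would then run the same three estimates with $c$ in place of $\lc$: the non-logarithmic curve lemma above, the non-logarithmic half of \cref{semi_continuity} to bound $c_{x'}(\iota'^{\ast}j_!\cL)\leq m_{x'}(\iota'^{\ast}E)\,c_E(j_!\cL)$ and $c_x(f_{C\ast}\iota'^{\ast}j_!\Lambda)=c_x(\iota^{\ast}f_{\ast}j_!\Lambda)\leq c_D(f_{\ast}j_!\Lambda)$, and finally \cref{multiplicity bound} to bound $m_{x'}(\iota'^{\ast}E)\leq f(E/D)$. Combining these and using $c_x(\iota^{\ast}f_{\ast}j_!\cL)=c_D(f_{\ast}j_!\cL)$ yields the claim.

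The main obstacle is the opening move of the last paragraph. Unlike the logarithmic proof, one cannot range over all curves and pass to a supremum, because the non-logarithmic supremum formula is unavailable and the imperfection of $k(D)$ blocks any direct $c=\lc+1$ comparison at the generic point of $D$. The argument therefore hinges on producing a \emph{single} curve that simultaneously detects the generic non-logarithmic conductor of $f_{\ast}j_!\cL$ along $D$ (through \cref{equality_DT}) and is transverse enough for the local computation; the delicate point to verify is that the transversality conditions furnished by \cref{equality_DT} are compatible both with meeting $D$ in a unique point and with the cartesianness needed for proper base change, all of which hold for a generic such curve.
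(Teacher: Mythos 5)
Your proposal is correct and takes essentially the same route as the paper: the paper's proof likewise invokes \cref{equality_DT} to produce a single curve detecting the generic conductor, transfers the computation to the normalized curve $C'$ by proper base change exactly as in \cref{FDILocal_irreducible_radicial_log}, and upgrades \cref{bound_direct_image_dim_1} to the non-logarithmic setting via $c=\lc+1$ (valid since closed points of smooth curves over the perfect field $k$ have perfect residue fields, by \cref{inequalityLogNonLog}), before concluding with \cref{semi_continuity} and \cref{multiplicity bound}. The only cosmetic difference is that the paper applies \cref{equality_DT} simultaneously to $f_{*}j_!\cL$ and $f_{*}j_!\Lambda$, so the chosen curve computes both generic conductors exactly, whereas you handle the $\Lambda$-term with \cref{semi_continuity}; both work.
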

\begin{proof}
%Let $\iota:C\to X$ be an immersion from a smooth curve over $k$ with the following conditions:
%\begin{enumerate}\itemsep=0.2cm
%\item
%$C$ meets $D$ transversely at a unique closed point $x$;
%\item
% the ramification of $f_{*}j_!\cL$ and $f_{*}j_!\Lambda$ are non-degenerate at $x$;
% \item
%The immersion $\iota:C\to X$ is $SS(f_{*} j_!(\cL\oplus \Lambda))$-transversal.
%\end{enumerate}
By \cref{equality_DT} applied to $f_{*}j_!\cL$ and $f_{*}j_!\Lambda$,  there is an immersion   $\iota:C\hookrightarrow X$ from a smooth curve over $k$ and a point $x\in C\cap D$ such that 
$$
c_{D}(f_{*}j_!\cL)  =c_{x}(\iota^*f_{*}j_!\cL) \text{ and }
 c_{D}(f_{*}j_!\Lambda) =c_{x}(\iota^*f_{*}j_!\Lambda)    \ .
$$  
Let $C'$ be the normalization of $C\times_XY$. 
We have the following commutative diagrams
$$
\xymatrix{\relax
E\ar[d]_{f_D}\ar[r]&Y\ar[d]_f&C'\ar[d]^{f_C}\ar[l]_-(0.5){\iota'}\\
D\ar[r]&X&\ar[l]^-(0.5){\iota}C}
$$
As in the proof of \cref{FDILocal_irreducible_radicial_log}, we have 
$$
\iota^* f_{*} ( j_!\cL )  \simeq f_{C*}\iota'^*(j_!\cL ) \ .
$$   
Hence, we deduce
$$
c_{D}(f_{*}j_!\cL)  =c_{x}(f_{C*}\iota'^*(j_!\cL ) ) \text{ and }
 c_{D}(f_{*}j_!\Lambda) =c_{x}(f_{C*}\iota'^*(j_!\Lambda ) )    \ .
$$  
On the other hands, we have
\begin{align*}
c_{x}(f_{C*}\iota'^*(j_!\cL) ) &= \lc_{x}(f_{C*}\iota'^*(j_!\cL) )  +1   & \text{\cref{inequalityLogNonLog}} \\
&\leq \max\left\{ \lc_x(f_{C*}\iota'^*(j_!\Lambda )), \lc_{x'}(\iota'^*(j_!\cL) ),x'\in f_C^{-1}(x)  \right\}  +1   & \text{\cref{bound_direct_image_dim_1}} \\
&\leq \max\left\{ c_x(f_{C*}\iota'^*(j_!\Lambda ) ), c_{x'}(\iota'^*(j_!\cL) ),x'\in f_C^{-1}(x)  \right\}   & \text{\cref{inequalityLogNonLog}} \\
  &   \leq   \max\left\{ c_x(f_{C*}\iota'^*(j_!\Lambda )), m_{x'}(\iota'^*E)\cdot c_{E}(j_!\cL) ,x'\in f_C^{-1}(x)  \right\}           &  \text{\cref{semi_continuity}}    \\  
                                           &    \leq   \max\left\{ c_x(f_{C*}\iota'^*(j_!\Lambda )),   f(E/D)\cdot c_{E}(j_!\cL)   \right\}     & \text{\cref{multiplicity bound}}
\end{align*}

\cref{FDILocal_irreducible_radicial} thus follows.

\end{proof}

\begin{prop}\label{FDILocal}
Let $f:Y\to X$ be a finite surjective morphism of normal schemes of finite type over $k$. 
Let $D$ be an irreducible effective Cartier divisor on $X$ and put $U:=X-D$.
Define  $E:=(D\times_X Y)^{\red}$  and put $j : V:=Y-E\hookrightarrow Y$.
Let $\{E_i\}_{1\leq i\leq m}$ be the irreducible components of $E$. 
Assume that the restriction $f_0:V\to U$ is étale. 
For every $\cL\in \Loc_{tf}(V,\Lambda)$, we have 
$$
c_D(f_{*}j_!\cL)\leq \max_{1\leq i\leq m}\{c_D(f_{*}j_!\Lambda), f(E_i/D)^{\mathrm{ins}}\cdot c_{E_i}(j_!\cL)\}
$$
and 
$$
\lc_D(f_{*}j_!\cL)\leq \max_{1\leq i\leq m}\{\lc_D(f_{*}j_!\Lambda), f(E_i/D)^{\mathrm{ins}}\cdot \lc_{E_i}(j_!\cL)\}.
$$
\end{prop}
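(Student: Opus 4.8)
The plan is to run the curve argument of \cref{FDILocal_irreducible_radicial_log} and \cref{FDILocal_irreducible_radicial}, modified in two ways: allowing $E$ to be reducible, and replacing the full residue degree $f(E/D)$ by its inseparable part $f(E_i/D)^{\mathrm{ins}}$. First I would reduce to the smooth case. Since $c_D,\lc_D,c_{E_i},\lc_{E_i}$ and $f(E_i/D)^{\mathrm{ins}}$ only depend on the localizations at the generic point $\eta$ of $D$ and at the generic points $\xi_i$ of the $E_i$, all of codimension $1$, and since $X$ and $Y$ are normal, hence regular in codimension $1$, hence smooth there because $k$ is perfect, I can shrink $X$ by removing a closed subset of codimension $\geq 2$ without affecting either inequality. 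Thus I may assume that $X$ and $Y$ are smooth, that $D$ is smooth irreducible, and that each $E_i$ is smooth, while $f_0$ remains étale.

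The crux is a refinement of \cref{multiplicity bound}. For a curve $\iota:C\to X$ meeting $D$ at a general point $x$, with $C'$ the normalization of $C\times_X Y$, $\iota':C'\to Y$ and $f_C:C'\to C$, I claim that for $x'\in C'$ lying over $x$ and over $E_i$,
$$
m_{x'}(\iota'^* E_i)\leq (D,C)_x\cdot f(E_i/D)^{\mathrm{ins}} .
$$
As $m_{x'}(\iota'^* E_i)=(D,C)_x\cdot e(x'/x)/e(E_i/D)$ as in the proof of \cref{multiplicity bound}, this reduces to $e(x'/x)\leq e(E_i/D)\cdot f(E_i/D)^{\mathrm{ins}}$. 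To prove it I factor $f$ near $\xi_i$ through the maximal subcover $Y^{\mathrm{ur}}\to X$ whose residue extension at $\eta$ is the separable closure $F_i^{\mathrm{s}}$ of $F$ in $k(\xi_i)$. This subcover is étale in a neighborhood of $\eta$, hence étale near $x$ for general $x$, so that $C^{\mathrm{ur}}:=C\times_X Y^{\mathrm{ur}}\to C$ is étale at the relevant points and contributes only to the residue degree of $x'/x$, not to its ramification. The remaining cover $C'\to C^{\mathrm{ur}}$ has ramification bounded by the degree of $Y\to Y^{\mathrm{ur}}$ over $\xi_i$, namely $e(E_i/D)\cdot f(E_i/D)^{\mathrm{ins}}$, and multiplicativity of ramification indices gives the claim.

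With the refined bound in hand, the logarithmic inequality follows the pattern of \cref{FDILocal_irreducible_radicial_log}. Applying \cref{Hu_generic_swan_via_curve} to $f_*j_!\cL$ on $X$ (letting $C$ range over curves meeting $D$ at general points, which compute the generic conductor $\lc_D$), proper base change gives $\iota^* f_*j_!\cL\simeq f_{C*}\iota'^* j_!\cL$ because $f_0$ is étale. Then \cref{bound_direct_image_dim_1} bounds $\lc_x$ by the maximum over $x'\in f_C^{-1}(x)$ of the local log conductors, \cref{semi_continuity} bounds each $\lc_{x'}(\iota'^* j_!\cL)$ by $m_{x'}(\iota'^* E_i)\cdot\lc_{E_i}(j_!\cL)$, and the refined multiplicity bound turns this into $(D,C)_x\cdot f(E_i/D)^{\mathrm{ins}}\cdot\lc_{E_i}(j_!\cL)$. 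Dividing by $(D,C)_x$, taking the supremum over $C$, and recognizing $\lc_D(f_*j_!\Lambda)$ via \cref{Hu_generic_swan_via_curve} yields the displayed log inequality. The non-logarithmic inequality is obtained the same way, replacing \cref{Hu_generic_swan_via_curve} by \cref{equality_DT} to select a single good transversal curve through the dense open $\Omega\subset D$ and inserting \cref{inequalityLogNonLog} to pass between $c$ and $\lc$, exactly as in \cref{FDILocal_irreducible_radicial}.

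I expect the main obstacle to be the refined multiplicity bound. The delicate point is the construction and spreading-out of the maximal separable-residue subcover $Y^{\mathrm{ur}}\to X$ as an honest étale morphism over a neighborhood of $\eta$, together with the verification that for curves through a general point of $D$ the separable residue degree $f(E_i/D)^{\mathrm{s}}$ is entirely absorbed into the residue degree $[k(x'):k(x)]$ rather than into the ramification $e(x'/x)$. A secondary point, arising only for the logarithmic inequality, is to justify that curves meeting $D$ at general points already compute the generic log conductor $\lc_D$, so that the refined bound—valid only for general $x$—suffices to control the supremum in \cref{Hu_generic_swan_via_curve}.
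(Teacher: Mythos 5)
Your overall plan---absorb the separable part $f(E_i/D)^{\mathrm{s}}$ of the residue degree into residue extensions so that only $f(E_i/D)^{\mathrm{ins}}$ can contribute to ramification, then rerun the curve arguments of \cref{FDILocal_irreducible_radicial_log} and \cref{FDILocal_irreducible_radicial}---is exactly the idea of the paper's proof, and your secondary worry is benign: $\lc_D$ and $c_D$ depend only on the generic point $\eta$ of $D$, so one may shrink $X$ Zariski-locally around $\eta$ before invoking \cref{Hu_generic_swan_via_curve} or \cref{equality_DT}, discarding any bad closed subset of $D$. The genuine gap is the construction at the heart of your refined multiplicity bound: a factorization $Y\to Y^{\mathrm{ur}}\to X$ through a subcover that is étale near $\eta$ and realizes the full separable residue degree $f(E_i/D)^{\mathrm{s}}$ at the point under $\xi_i$ need not exist. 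To see why, pass to a Galois closure $L$ of $K(Y)/K(X)$ (separable since $f_0$ is étale), set $H=\Gal(L/K(Y))$, and let $I\subset D_{\widetilde\xi}$ be the inertia and decomposition groups at a prime $\widetilde\xi$ of $L$ over $\xi_i$. Intermediate covers $K(X)\subset M\subset K(Y)$ unramified at the prime under $\widetilde\xi$ correspond to subgroups $H'\supset H$ with $H'\cap D_{\widetilde\xi}\supset I$; the optimal choice is $H'=\langle H,I\rangle$, whose residue degree there is $[D_{\widetilde\xi}:\langle H,I\rangle\cap D_{\widetilde\xi}]$, and this can be strictly smaller than $f^{\mathrm{s}}=[D_{\widetilde\xi}:I\cdot(H\cap D_{\widetilde\xi})]$. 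Concretely, take $G=S_4$, $D_{\widetilde\xi}$ dihedral of order $8$ containing $I=\langle(1234)\rangle$ (tame, so realizable for $p$ odd), and $H=\langle(12)\rangle$: then $\langle H,I\rangle=S_4$, so no nontrivial subcover is unramified at the prime even though $f^{\mathrm{s}}=2$. In such a situation your $C'\to C^{\mathrm{ur}}$ has local degree $e(E_i/D)\cdot f^{\mathrm{s}}\cdot f^{\mathrm{ins}}$ rather than $e(E_i/D)\cdot f^{\mathrm{ins}}$, and the claimed inequality $e(x'/x)\leq e(E_i/D)\cdot f(E_i/D)^{\mathrm{ins}}$ is not obtained.

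The correct implementation, and the one the paper uses, replaces your global subcover by an \emph{étale base change} on $X$: pull back to the strict henselization of $X$ at a geometric point $\overline\eta$ above $\eta$ and spread out to an affine smooth étale neighborhood $\widetilde X\to X$, over which $\widetilde D$ is smooth irreducible and the pullback of $Y$ decomposes into a disjoint union, for each $i$, of $f(E_i/D)^{\mathrm{s}}$ pieces $\widetilde Y_i$ with $\widetilde E_i\to\widetilde D$ finite surjective \emph{radicial} of degree $f(E_i/D)^{\mathrm{ins}}$; since the strict henselization has separably closed residue field, the separable residue extension splits into copies rather than descending to a subcover. Since $f$ is finite, $f_*$ commutes with this base change, so all the conductors $c_D$, $\lc_D$, $c_{E_i}$, $\lc_{E_i}$ are computed on $\widetilde X$, and \cref{FDILocal_irreducible_radicial_log} and \cref{FDILocal_irreducible_radicial} apply verbatim with $f(\widetilde E_i/\widetilde D)=f(E_i/D)^{\mathrm{ins}}$. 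Note that your refined multiplicity bound is in fact true for $x$ in a dense open subset of $D$---it follows from \cref{multiplicity bound} applied over $\widetilde X$, since étale maps preserve intersection numbers---but it cannot be reached through a factorization defined over $X$ itself; the étale localization is essential, not a technical convenience.
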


\begin{proof}
%Since $X$ is normal, $X$ is a disjoint union of its irreducible components.
%Hence, we can suppose that $X$ is irreducible.
%Since $X$ and $Y$ are normal, the subset 
%$$
%Z:=X^{\sing} \bigcup D^{\sing} \bigcup f(Y^{\sing}) \bigcup  f(E^{\sing}) 
%$$
%has codimension at least two in $X$.
%Furthermore, $f^{-1}(Z)$ has codimension at least 2 in $Y$.
%At the cost of pulling-back the situation over $X-Z$, we can thus suppose that $X,D,Y,E$ are smooth.
To lighten the notations, we will omit the lower shrieks in what follows.
Let $\eta$ be the generic point of $D$ and $\overline\eta\to X$ an algebraic geometric point above $\eta$. 
Let $\xi_i$ be the generic point of $E_i$ $(1\leq i\leq m)$, and $\overline\xi_i\to Y$ an algebraic geometric point above $\xi_i$ such that the composition $\overline\xi_i\to\xi_i\to\eta$ factors through $\overline\eta\to\eta$.
By pulling-back above the strict henselianization of $X$ at $\overline\eta$ and then spreading out, there exists a commutative diagram
\[
\xymatrix@C=0.8em{
\widetilde V=\displaystyle{\coprod_{1\leq i\leq m}}\coprod_{f(E_i/D)^{\mathrm{s}}} \widetilde V_i \ar[rr]\ar[dr] \ar[dd]_(0.4){\widetilde f_0}  &   & \widetilde Y=\displaystyle{\coprod_{1\leq i\leq m}}\coprod_{f(E_i/D)^{\mathrm{s}}}\widetilde Y_{i}\ar'[d][dd]_(-0.4){\widetilde f}   \ar[rd] &&  \widetilde E=\displaystyle{\coprod_{1\leq i\leq m}}\coprod_{f(E_i/D)^{\mathrm{s}}} \widetilde E_i \ar[dr]\ar'[d][dd] \ar[ll]\\ 
 & V  \ar[rr] \ar[dd]_(0.4){f_0} &  &  Y\ar[dd]_(0.4){f} && E \ar[dd] \ar[ll]  \\
      \tilde{U}  \ar'[r][rr]  \ar[dr]&   & \tilde{X} \ar[dr] &&   \tilde{D}\ar[rd] \ar'[l][ll]\\
   &  U \ar[rr]&   & X &&   D  \ar[ll]
}
\]
where every square but the right front and back squares are cartesian,  and where $\widetilde X$ is an affine smooth connected \'etale neighborhood of $\overline\eta\to X$ such that $\widetilde D$ is irreducible and smooth,  $\widetilde Y_{i}$ are affine smooth connected \'etale neighborhood of $\overline\xi_i\to Y$ with $\widetilde E_i=\widetilde Y_i\times_YE_i$  irreducible and smooth  and $\widetilde E_i\to \widetilde D$  finite, surjective and radiciel of degree $f(E_i/D)^{\mathrm {ins}}$ for $1\leq i\leq m$.
Since $\widetilde X\to X$ is \'etale, the proper base change yields 
\begin{align*}
c_{D}(f_{*}\Lambda) &=  c_{\widetilde D}(\widetilde f_{*}\Lambda)=\max_{1\leq i\leq m}\{c_{\widetilde D}(\widetilde f_{i*}\Lambda)\};\\
 c_{ D}(f_{*}\cL) &=c_{\widetilde D}(\widetilde f_{*}(\cL|_{\widetilde V})) =\max_{1\leq i\leq m}\{c_{\widetilde D}(\widetilde f_{,i*}(\cL|_{\widetilde V_i}))\} ; \\
c_{ E_i}(\cL)&=  c_{\widetilde E_i}(\cL|_{\widetilde V_i});
\end{align*}
and similarly with the logarithmic conductors, where $\widetilde f_{i}:\widetilde Y_i\to \widetilde X$ is induced by $f : Y\to X$.
Hence, it is enough to prove 
$$
c_{\widetilde D}(\widetilde f_{i*}(\cL|_{\widetilde V_i}))\leq \max\{c_{\widetilde D}(\widetilde f_{i*}\Lambda), f(\widetilde E_i/\widetilde D) \cdot c_{\widetilde E_i}(\cL|_{\widetilde V_i})\}
$$
and similarly with the logarithmic conductors.
Thus, we are left to prove \cref{FDILocal} in the case where $D\subset X$ and $E=(D\times_XY)_{\mathrm{red}}\subset Y$ are irreducible and smooth.
This case follows from \cref{FDILocal_irreducible_radicial_log} and \cref{FDILocal_irreducible_radicial}.

\end{proof}

\begin{cor}\label{remark_bound_conductor_direct_image}
In the situation from \cref{FDILocal}, we have 
$$
c_{D}(f_{*}j_!\cL)\leq c_{D}(f_{*}j_!\Lambda) + d\cdot c_{E}(j_!\cL)
$$
and 
$$
\lc_{D}(f_{*}j_!\cL)\leq \lc_{D}(f_{*}j_!\Lambda) +  d\cdot  \lc_{E}(j_!\cL)
$$
where $d$ is the degree of $f : X \to Y$.
Notice that $d$, $c_{D}(f_{*}j_!\Lambda)$  and $\lc_{D}(f_{*}j_!\Lambda)$ only depend on $f:X\to Y$.
\end{cor}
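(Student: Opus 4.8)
The plan is to deduce the corollary directly from \cref{FDILocal} by two elementary reductions: one replacing the component-wise conductors $c_{E_i}$, $\lc_{E_i}$ by the generic conductors $c_E$, $\lc_E$ along the whole divisor $E$, and one bounding the inseparable residue degrees $f(E_i/D)^{\mathrm{ins}}$ by the global degree $d$ of $f$.

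First I would record that, by the very definition of the generic conductor along a divisor, $c_E(j_!\cL) = \max_{1\leq i\leq m} c_{E_i}(j_!\cL)$ and likewise $\lc_E(j_!\cL) = \max_{1\leq i\leq m}\lc_{E_i}(j_!\cL)$, so that $c_{E_i}(j_!\cL) \leq c_E(j_!\cL)$ and $\lc_{E_i}(j_!\cL)\leq \lc_E(j_!\cL)$ for every $i$. Next I would bound the degrees. Let $\eta$ be the generic point of $D$ and $\xi_i$ that of $E_i$. Then $A := \cO_{X,\eta}$ is a discrete valuation ring, and since $Y$ is normal and finite over $X$, the semi-local ring at the $\xi_i$ is the integral closure of $A$ in $K(Y)$, whose maximal ideals correspond to the $E_i$. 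The fundamental identity for this extension of Dedekind rings gives $\sum_{i=1}^m e(E_i/D)\, f(E_i/D) = [K(Y):K(X)] = d$; as all summands are positive integers, $f(E_i/D) \leq d$, hence $f(E_i/D)^{\mathrm{ins}} \leq f(E_i/D) \leq d$ for every $i$.

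Combining the two reductions, for every $i$ one gets $f(E_i/D)^{\mathrm{ins}}\cdot c_{E_i}(j_!\cL) \leq d\cdot c_E(j_!\cL)$, and the same with logarithmic conductors. Feeding these bounds into the inequalities of \cref{FDILocal} and using $\max\{a,b\}\leq a+b$ for non-negative reals then yields $c_D(f_{*}j_!\cL) \leq c_D(f_{*}j_!\Lambda) + d\cdot c_E(j_!\cL)$ and its logarithmic analogue. The concluding remark is immediate, since the constant sheaf $\Lambda$ on $V$ and the integer $d$ do not involve $\cL$: thus $d$, $c_D(f_{*}j_!\Lambda)$ and $\lc_D(f_{*}j_!\Lambda)$ depend only on $f$.

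As every step is routine, the only point requiring a little care is the fundamental identity $\sum_i e(E_i/D)\,f(E_i/D) = d$, which needs the integral closure of $A$ in $K(Y)$ to be finite over $A$; this is guaranteed here because all schemes in sight are of finite type over the field $k$, so $A$ is excellent.
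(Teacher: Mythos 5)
Your proof is correct and matches the paper's (implicit) argument: the corollary is stated there without proof as an immediate consequence of \cref{FDILocal}, and the intended derivation is exactly your chain of reductions --- $c_{E_i}\leq c_E$ and $\lc_{E_i}\leq\lc_E$ by definition of the generic conductor, $f(E_i/D)^{\mathrm{ins}}\leq f(E_i/D)\leq d$ via the fundamental identity $\sum_i e(E_i/D)f(E_i/D)=d$ (the same identity the paper invokes from \cite[Proposition I.10]{CL} in the proof of \cref{multiplicity bound}, valid here since $Y$ normal and finite over $X$ makes the semi-local ring over $\eta$ the finite integral closure of $\cO_{X,\eta}$), and finally $\max\{a,b\}\leq a+b$ for non-negative reals, just as in the proof of \cref{Ramification_cycle_finite_direct_image}. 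The only cosmetic point is to reduce first to $Y$ irreducible (as the paper does elsewhere via \cite[0357]{SP}) so that $K(Y)$ is a field, or to interpret $d$ as the generic rank of $f_*\cO_Y$; either way your bound $f(E_i/D)\leq d$ persists.
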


\begin{thm}\label{Ramification_cycle_finite_direct_image}
Let $f:Y\to X$ be a finite  morphism of normal schemes of finite type over $k$. 
Let $D$ be a reduced effective Cartier divisor on $X$ and put $U:=X-D$.
Define $E:=(D\times_X Y)^{\red}$ and put $j : V:=Y-E\hookrightarrow Y$.
Assume that the restriction $f_0:V\to U$ is étale. 
For $\cL\in \Loc_{tf}(V,\Lambda)$,  we have 
$$
C(f_{*}j_!\cL)\leq C(f_{*}j_!\Lambda)+ f_* C(j_!\cL)
$$
and 
$$
LC(f_{*}j_!\cL)\leq LC(f_{*}j_!\Lambda)+ f_* LC(j_!\cL) \ .
$$
\end{thm}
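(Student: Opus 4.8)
The plan is to reduce the global divisor inequality to the generic conductor bounds already established in \cref{FDILocal}, and to verify it one multiplicity at a time along the components of $D$. First I would record that both sides are Weil divisors supported on $D$. Since $f^{-1}(U)=V$ and $f_0:V\to U$ is finite étale, finite base change gives $(f_{*}j_!\cL)|_U\simeq f_{0*}\cL$, which is locally constant; hence $C(f_{*}j_!\cL)$, and likewise $C(f_{*}j_!\Lambda)$, are supported on $D$. On the other side $j_!\cL$ is lisse on $V=Y-E$, so $C(j_!\cL)=\sum_i c_{E_i}(j_!\cL)\,E_i$ is supported on $E$, and therefore $f_{*}C(j_!\cL)$ is supported on $f(E)\subseteq D$. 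It thus suffices to compare multiplicities at each irreducible component $D_a$ of $D$, the inequality being trivial at every other integral Weil divisor, where the left-hand multiplicity vanishes.

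Before invoking \cref{FDILocal}, which requires $f$ surjective and $D$ irreducible, I would carry out two reductions. Decompose $X$ into its connected (hence integral normal) components, and discard the connected components of $Y$ that are contained in $E$: these contribute nothing to $j_!\cL$, to $j_!\Lambda$, nor to $C(j_!\cL)$. Each remaining component of $Y$ meets $V$, hence is open-closed in $V$, and maps onto $U$ by connectedness of $U$ together with the finite étaleness of $f_0$; being finite with dense image it maps surjectively onto $X$. This allows me to assume $f$ finite surjective with $X$ integral. To isolate a single component $D_a$, I would then localize on $X':=X-\bigcup_{b\neq a}D_b$, which turns $D$ into the irreducible effective Cartier divisor $D_a|_{X'}$ without altering any of the relevant conductors, as these are computed at generic points surviving the localization.

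Now \cref{FDILocal} applies. Writing $A:=c_{D_a}(f_{*}j_!\Lambda)$ and letting $E_i$ range over the components of $E$ with $f(E_i)=D_a$, it yields
$$
c_{D_a}(f_{*}j_!\cL)\leq \max_i\{A,\ f(E_i/D_a)^{\mathrm{ins}}\cdot c_{E_i}(j_!\cL)\} \ .
$$
The decisive elementary step is to convert this maximum into a sum: since all terms are nonnegative, $\max_i\{A,B_i\}\leq A+\sum_i B_i$, and since the inseparable degree divides the full residue degree, $f(E_i/D_a)^{\mathrm{ins}}\leq f(E_i/D_a)$. Combining these gives
$$
c_{D_a}(f_{*}j_!\cL)\leq A+\sum_i f(E_i/D_a)\cdot c_{E_i}(j_!\cL) \ .
$$
Finally I would identify the sum as a pushforward multiplicity: for the finite morphism $f$ the proper pushforward of the prime divisor $E_i$ is $f_{*}E_i=f(E_i/D_a)\cdot D_a$, so $\sum_i f(E_i/D_a)\,c_{E_i}(j_!\cL)$ is exactly the multiplicity of $f_{*}C(j_!\cL)$ along $D_a$. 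Together with $A=m_{D_a}(C(f_{*}j_!\Lambda))$ this is the desired multiplicity inequality at $D_a$, and hence the inequality of conductor divisors.

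The logarithmic statement is proved verbatim, replacing $c$ by $\lc$ throughout and invoking the logarithmic half of \cref{FDILocal}. The ramification-theoretic content is entirely contained in \cref{FDILocal}; the only genuine subtleties here are the surjectivity reduction, which requires controlling the components of $Y$ that fail to dominate $X$, and the bookkeeping that turns the "maximum over the fibre" bound into the additive pushforward on the right-hand side, with the passage from the inseparable degree to the full residue degree being what makes the two sides line up.
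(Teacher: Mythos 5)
Your proposal is correct and follows essentially the same route as the paper's proof: reduce to $X$, $Y$ integral with $f$ surjective and $D$ irreducible, apply \cref{FDILocal}, pass from the maximum to a sum using nonnegativity and $f(E_i/D)^{\mathrm{ins}}\leq f(E_i/D)$, and identify $\sum_i f(E_i/D)\,c_{E_i}(j_!\cL)$ with the multiplicity of $f_*C(j_!\cL)$ along $D$. Your explicit handling of non-dominant components of $Y$ and of the supports merely spells out reductions the paper leaves implicit (the paper instead notes that $C(f_*j_!\cL)=0$ when $f$ is not surjective), so the two arguments coincide in substance.
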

\begin{proof}
We argue for the non logarithmic conductor divisor as the proof is the same for the logarithmic conductor divisor.
We can assume that $X$ and $Y$ are integral.
If $f:Y\to X$ is not surjective, the sheaf $f_{*}j_!\cL$ is supported on a strict closed subset of $X$. 
In that case $C(f_{*}j_!\cL)= 0$ and there is nothing to prove.
We can thus assume that $X$ and $Y$ are integral and that $f:Y\to X$ is surjective.
The question is local on $X$. 
Hence, we may assume that $D$ is irreducible. 
We denote by $E_i$'s $(1\leq i\leq m)$ the irreducible components of $E$. 
By  \cref{FDILocal}, we have
\begin{align*}
 c_D(f_{*}j_!\cL)&\leq \max_{1\leq i\leq m}\{ c_D(f_{*}j_!\Lambda), f(E_i/D)^{\mathrm{ins}}\cdot  c_{E_i}(j_!\cL)\}\\
&\leq  c_D(f_{*}j_!\Lambda)+ \max_{1\leq i\leq m}\{ f(E_i/D)^{\mathrm{ins}}\cdot  c_{E_i}(j_!\cL)\}\\
&\leq  c_D(f_{*}j_!\Lambda)+ \max_{1\leq i\leq m}\{ f(E_i/D)\cdot c_{E_i}(j_!\cL)\}\\
&\leq  c_D(f_{*}j_!\Lambda)+ \sum_{1\leq i\leq m} f(E_i/D)\cdot  c_{E_i}(j_!\cL).
\end{align*}
\cref{Ramification_cycle_finite_direct_image} thus follows.
\end{proof}

\begin{cor}\label{Ramification_cycle_finite_direct_image_smooth_target}
In the situation from \cref{Ramification_cycle_finite_direct_image}, assume that $f : Y \to X$ is surjective with $X$ smooth and $Y$ connected. 
Let $\cE\in \bQ[\Coh(X)]$.
Then, for every $\cL\in \Loc_{tf}(V,\Lambda)$ such that $j_!\cL$ has log conductors bounded by $\cE$, the direct image $f_{\ast}(j_!\cL)$ has log conductors bounded by 
$$
C(f_{*}j_!\Lambda)+ f_* (T(\cE)+E) \ .
$$
If furthermore $D$ has simple normal crossings, $f_{\ast}(j_!\cL)$ has logarithmic conductors bounded by 
$$
LC(f_{*}j_!\Lambda)+ f_* T(\cE) \ .
$$
\end{cor}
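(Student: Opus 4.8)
The strategy is to assemble the statement from the conductor-divisor inequality of \cref{Ramification_cycle_finite_direct_image} together with the comparison, valid on a smooth base, between log conductors and conductor divisors furnished by \cref{bounded_ramification_ex}. The first point is to recognise the shape of $f_{*}(j_!\cL)$. Since $f$ is finite, $f_{*}$ is exact and the stalk of $f_{*}(j_!\cL)$ at a geometric point $\overline{x}\to X$ is the sum of the stalks of $j_!\cL$ over the points of $f^{-1}(\overline{x})$. For $\overline{x}\in D$ all these points lie in $E$, where $j_!\cL$ vanishes; hence $f_{*}(j_!\cL)$ vanishes on $D$ and is of the form $j'_!\cG$, where $j' : U\hookrightarrow X$ and $\cG:=f_{0*}\cL$. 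As $f_0 : V\to U$ is finite étale, $\cG$ is locally constant constructible of finite tor-dimension, so $\cG\in\Loc_{tf}(U,\Lambda)$.

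Because $X$ is smooth, \cref{bounded_ramification_ex}-(2) applied to $j'_!\cG$ shows that $f_{*}(j_!\cL)$ has log conductors bounded by its conductor divisor $C(f_{*}(j_!\cL))$; when furthermore $D$ has simple normal crossings, \cref{bounded_ramification_ex}-(3) shows it has log conductors bounded by $LC(f_{*}(j_!\cL))$. It therefore suffices to bound these two divisors. By \cref{Ramification_cycle_finite_direct_image} we have
\[
C(f_{*}j_!\cL)\leq C(f_{*}j_!\Lambda)+f_{*}C(j_!\cL)\quad\text{and}\quad LC(f_{*}j_!\cL)\leq LC(f_{*}j_!\Lambda)+f_{*}LC(j_!\cL).
\]
The assumption that $j_!\cL$ has log conductors bounded by $\cE$ gives $LC(j_!\cL)\leq T(\cE)$ by \cref{bounded_ramification_ex}-(1). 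Since $\cL$ is locally constant on $V=Y-E$, the divisor $C(j_!\cL)$ is supported on the reduced divisor $E$, so \cref{inequalityLogNonLog_divisor_rem} yields $C(j_!\cL)\leq LC(j_!\cL)+E\leq T(\cE)+E$.

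Applying the $\bQ$-linear push-forward $f_{*}$ of Weil divisors, which preserves inequalities between effective differences, we obtain
\[
C(f_{*}(j_!\cL))\leq C(f_{*}j_!\Lambda)+f_{*}(T(\cE)+E)\quad\text{and}\quad LC(f_{*}(j_!\cL))\leq LC(f_{*}j_!\Lambda)+f_{*}T(\cE).
\]
In each case the difference between the claimed divisor and $C(f_{*}(j_!\cL))$, respectively $LC(f_{*}(j_!\cL))$, is an effective $\bQ$-Weil divisor, hence corresponds via the convention following \cref{bounded_conductor} to an element of $\bQ_{\geq 0}[\Coh(X)]$. Enlarging the bound by such an effective coherent sheaf is harmless by \cref{many_properties}, so the bound on the log conductors of $f_{*}(j_!\cL)$ passes from $C(f_{*}(j_!\cL))$ to $C(f_{*}j_!\Lambda)+f_{*}(T(\cE)+E)$, and from $LC(f_{*}(j_!\cL))$ to $LC(f_{*}j_!\Lambda)+f_{*}T(\cE)$, which is what we want.

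The only substantial input is \cref{Ramification_cycle_finite_direct_image}, so the argument here is mostly organisational. The steps requiring the most care are the identification $f_{*}(j_!\cL)=j'_!\cG$, which is what makes \cref{bounded_ramification_ex} applicable and which relies on the finiteness of $f$, and the appearance of the extra term $E$ in the non-logarithmic bound, which originates precisely from converting $C(j_!\cL)$ into $LC(j_!\cL)$ through \cref{inequalityLogNonLog_divisor_rem}.
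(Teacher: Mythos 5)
Your proposal is correct and follows essentially the same route as the paper: apply \cref{bounded_ramification_ex}-(2) (resp.\ (3)) on the smooth base to reduce to bounding $C(f_*j_!\cL)$ (resp.\ $LC(f_*j_!\cL)$), then combine \cref{Ramification_cycle_finite_direct_image} with $C(j_!\cL)\leq LC(j_!\cL)+E\leq T(\cE)+E$ from \cref{inequalityLogNonLog_divisor_rem} and \cref{bounded_ramification_ex}-(1). Your explicit identification $f_*(j_!\cL)=j'_!(f_{0*}\cL)$ with $f_{0*}\cL\in\Loc_{tf}(U,\Lambda)$, and the remark that enlarging the bounding divisor is licit by \cref{many_properties}-(2), are steps the paper leaves implicit, but they are correct and do not change the argument.
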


\begin{proof}
Since $X$ is smooth, $f_{\ast}(j_!\cL)$ has log conductors bounded by $C(f_{\ast}(j_!\cL))$ in virtue of \cref{bounded_ramification_ex}-(2).
By \cref{Ramification_cycle_finite_direct_image}, we have 
$$
C(f_{*}j_!\cL)\leq C(f_{*}j_!\Lambda)+ f_* C(j_!\cL) \ .
$$
By \cref{inequalityLogNonLog_divisor_rem} and  \cref{bounded_ramification_ex}-(1), we have
$$
C(j_!\cL) \leq LC(j_!\cL) +E \leq T(\cE) + E \ .
$$
This proves the first claim.
The second claim follows similarly via \cref{bounded_ramification_ex}-(3).
\end{proof}

\begin{thm}\label{finite_direct_radicial}
Let $f:Y\to X$ be a finite morphism between normal schemes of finite type over $k$. 
Let $D$ be a reduced effective Cartier divisor on $X$ and put $U\colon =X-D$. 
Define $E\colon= (D\times_X Y)^{\red}$ and put $j : V:= Y-E \hookrightarrow Y$.
Assume that the restriction $f_0:V\to U$ is radicial. 
Then, for $\cL\in\Loc_{tf}(V,\Lambda),$ we have 
$$
C(f_*j_!\cL)\leq f_*C(j_!\cL) 
$$
and 
$$
LC(f_*j_!\cL)\leq f_*LC(j_!\cL)\ .
$$
\end{thm}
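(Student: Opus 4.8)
The plan is to follow the template of \cref{Ramification_cycle_finite_direct_image}, reducing the inequality of Weil divisors to a bound on the generic (log) conductor at each irreducible component of $D$, and then transporting that bound to smooth curves, where the residue fields at closed points become perfect. The single new feature to exploit is that, $f_0$ being radicial, the function field extension $k(Y)/k(X)$ is purely inseparable, so the covering contributes no conductor of its own and the separable degrees are trivial.

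\textbf{Reductions.} I would first dispose of the non-surjective case exactly as in \cref{Ramification_cycle_finite_direct_image}: then $f_*j_!\cL$ is supported on a strict closed subset of $X$ and both conductor divisors vanish. So I may assume $X,Y$ integral and $f$ surjective, whence $k(Y)/k(X)$ is purely inseparable. As the statement is local on $X$ along the generic points of $D$, I would take $D$ irreducible with generic point $\eta$. Since $Y$ is normal and $k(Y)/k(X)$ is purely inseparable, there is a unique point $\xi$ of $Y$ above $\eta$; hence $E=(D\times_X Y)^{\red}$ has a single irreducible component above $D$, still written $E$, with $f(E/D)^{\mathrm{s}}=1$, i.e. $f(E/D)^{\mathrm{ins}}=f(E/D)$. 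Moreover $f_0$ is a finite universal homeomorphism, so $f_{0*}\cL$ is again lisse and $f_*j_!\cL$ is the extension by zero to $X$ of $f_{0*}\cL$; in particular $C(f_*j_!\cL)$ and $LC(f_*j_!\cL)$ are supported on $D$. Finally, by the same étale-neighbourhood spreading out used in the proof of \cref{FDILocal} — which simplifies here because the separable degree is $1$, so no disjoint copies occur and the radicial hypothesis is preserved by the étale base change — I would reduce to the case where $X,Y,D,E$ are smooth and irreducible, $E\to D$ is finite surjective radicial, and $f_0$ is radicial.

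\textbf{The local computation on curves.} Fix an immersion $\iota\colon C\hookrightarrow X$ from a smooth curve meeting $D$ at a single point $x$, let $\iota'\colon C'\to Y$ be the normalization of $C\times_X Y$, and let $f_C\colon C'\to C$ be the induced map with $x'$ the unique point above $x$. Since radicial morphisms are stable under base change and normalization, $f_C$ is a radicial finite morphism of smooth curves; the completed local extension $L/K$ at $x'/x$ is purely inseparable and the residue fields at $x,x'$ are perfect. Proper base change gives $\iota^*f_*(j_!\cL)\simeq f_{C*}\iota'^*(j_!\cL)$. Because the separable closure of $K$ in $L$ equals $K$, we get $\lc_x(f_{C*}j_!\Lambda)=\lc_{L/K}=\lc_K(\CoInd^{G_K}_{G_K}(\Lambda))=0$. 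Combining this with \cref{bound_direct_image_dim_1}, then \cref{semi_continuity} and \cref{multiplicity bound}, I obtain
$$
\lc_x\bigl(f_{C*}\iota'^*(j_!\cL)\bigr)\leq \lc_{x'}\bigl(\iota'^*(j_!\cL)\bigr)\leq m_{x'}(\iota'^*E)\cdot\lc_E(j_!\cL)\leq (D,C)_x\cdot f(E/D)\cdot\lc_E(j_!\cL).
$$
Dividing by $(D,C)_x=m_x(\iota^*D)$ and taking the supremum over all such curves via \cref{Hu_generic_swan_via_curve} yields $\lc_D(f_*j_!\cL)\leq f(E/D)\cdot\lc_E(j_!\cL)$.

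\textbf{Assembly and the non-logarithmic case.} Since $E$ is the unique component of $E$ above $D$ and $f_*E=f(E/D)\cdot D$, the coefficient of $D$ in $f_*LC(j_!\cL)$ is exactly $f(E/D)\cdot\lc_E(j_!\cL)$; as both conductor divisors are supported on the components of $D$, the bound just proved is precisely $LC(f_*j_!\cL)\leq f_*LC(j_!\cL)$. The non-logarithmic inequality follows from the same curve argument, replacing \cref{Hu_generic_swan_via_curve} by \cref{equality_DT} and passing between $c$ and $\lc$ through \cref{inequalityLogNonLog} exactly as in \cref{FDILocal_irreducible_radicial}; the radicial hypothesis again enters only through the vanishing $\lc_x(f_{C*}j_!\Lambda)=0$. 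I expect the main difficulty to lie not in the conductor estimate itself, which is essentially forced by \cref{slope_induction}, but in organizing the reduction to smooth $D$ and $E$ tested by curves with \emph{perfect} residue fields while preserving the radicial structure: the residue field at $\eta$ is imperfect, and the transparent behaviour of the logarithmic ramification filtration under purely inseparable extensions provided by \cref{equality_ramification_group_ins} is available only over perfect residue fields, which is exactly why the passage to curves is indispensable.
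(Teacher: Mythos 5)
Your proof is correct in substance, but it takes a genuinely different route from the paper's. The paper does not test $f_*j_!\cL$ on curves at all: after the same preliminary reductions (integral, surjective, $X$ smooth, $D$ integral, and $Y$ smooth after deleting $f(Y^{\sing})$, so that $f$ is flat), it henselizes at the generic point $\eta$ of $D$, notes that radiciality forces a single point $\xi$ over $\eta$ with $L/K$ purely inseparable, and then quotes \cite[Theorem 1.1]{Hu_purely_inseparable}: the inclusions $G_K^{dr}\subseteq\gamma(G_L^r)$ and $G_{K,\log}^{dr}\subseteq\gamma(G_{L,\log}^r)$ for purely inseparable extensions of henselian discrete valuation fields with \emph{imperfect} residue fields, where $d$ is the residue degree. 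This yields $c_K(N)\leq d\cdot c_L(M)$ and $\lc_K(N)\leq d\cdot\lc_L(M)$ directly at $\eta$, and the theorem follows since $f_*E=d\cdot D$. You instead rerun the template of \cref{FDILocal_irreducible_radicial_log} and \cref{FDILocal_irreducible_radicial}, so that all comparisons of ramification filtrations happen on traits with perfect residue fields, where \cref{equality_ramification_group_ins} and \cref{slope_induction_final} apply; the price is the full curve-detection and semi-continuity machinery (\cref{Hu_generic_swan_via_curve}, \cref{equality_DT}, \cref{semi_continuity}, \cref{multiplicity bound}). In effect you trade the imperfect-residue-field input of \cite{Hu_purely_inseparable} for Hu--Leal detection by curves; the paper's argument is shorter, while yours is self-contained relative to the toolkit already assembled for \cref{Ramification_cycle_finite_direct_image}. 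The constants match: $k(\xi)/k(\eta)$ is automatically purely inseparable (for $\bar a\in k(\xi)$, lift to $a\in\cO_{Y,\xi}$; then $a^{p^n}\in K\cap\cO_{Y,\xi}=\cO_{X,\eta}$ by normality), so $f(E/D)^{\mathrm{s}}=1$ and your coefficient $f(E/D)$ equals the paper's $d$, the multiplicity of $f_*E$ along $D$.

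One step is underjustified as written. The identification $\iota^*f_*(j_!\cL)\simeq f_{C*}\iota'^*(j_!\cL)$ is not bare proper base change: base change computes the pushforward from $C\times_XY$, and in \cref{FDILocal_irreducible_radicial_log} the étale hypothesis on $f_0$ was used precisely to see that $V\times_UC_0$ is smooth, hence coincides with its normalization $C'_0$. In your radicial situation this fails: $V\times_UC_0$ can be non-reduced (take $V\to U$ given by $z^p=x$ on $\bA^2_k$ and $C_0$ the axis $x=0$), and its reduction can be non-normal, so the square over $C_0$ involving $C'_0$ is not cartesian. The gap is fixable with tools you already invoked: $V\times_UC_0\to C_0$ is a finite universal homeomorphism (finite, surjective and radicial are stable under base change); moreover $K(C_0)\otimes_{K(U)}K(V)$ is local with residue field purely inseparable over $K(C_0)$, so $(V\times_UC_0)^{\red}$ is integral with $K(C'_0)/K(C_0)$ purely inseparable, whence $C'_0\to C_0$ is universally injective (unique extension of each valuation, purely inseparable residue extensions) and, by cancellation, the normalization map $C'_0\to(V\times_UC_0)^{\red}$ is a finite universal homeomorphism as well. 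By topological invariance of the étale topos, pushing forward from $C'_0$, from $(V\times_UC_0)^{\red}$ or from $V\times_UC_0$ produces the same sheaf on $C_0$, and with this patch your curve computation, and hence the whole proof, goes through.
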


\begin{proof}
We can assume that $X$ and $Y$ are integral.
If $f:Y\to X$ is not surjective, the sheaf $f_{*}j_!\cL$ is supported on a strict closed subset of $X$. 
In that case $C(f_{*}j_!\cL)= 0$ and there is nothing to prove.
We can thus assume that $X$ and $Y$ are integral and that $f:Y\to X$ is surjective.
The question  is  local in a neighbourhood of the generic points of $D$.
Hence we can assume that $X$ is smooth over $k$ and that $D$ is integral.
Let $\eta$ be the generic point of $D$.
Since $f:Y\to X$ is finite, for every $y\in Y$, we have $ f(\overline{\{y\}})= \overline{\{f(y)\}}$, so that $\dim \overline{\{y\}}=\dim \overline{\{f(y)\}}$.
Hence, $f^{-1}(\eta)$ is the set of generic points of $E$. 
Since $Y$ is normal, we have $f^{-1}(\eta)\cap Y^{\mathrm{sing}}=\emptyset$. 
After replacing $X$ by $X-f(Y^{\mathrm{sing}})$, we can further assume that $Y$ is also smooth over $k$. 
In this case, $f:Y\to X$ is flat. \\ \indent
Let $\xi_1,\dots,\xi_m$ be the generic points of $E$.
Let $R$ be the henselization of $X$ at $\eta$, let $K$ be the fraction field of $R$, let $S_i$ be the henselization of $Y$ at $\xi_i$ $(1\leq i\leq m)$ and let $L_i$ be the fraction field of $S_i$. 
Note that $R$ and the $S_i$'s are henselian discrete valuation rings. 
Since $f:X\to Y$ is finite and flat, we have 
\begin{align*}
\Spec(R)\times_X Y\cong\coprod_{1\leq i\leq m}\Spec(S_i),
\end{align*}
and $R\to S_i$ are finite and flat $(1\leq i\leq m)$. Hence, we have 
$$
\Spec(K)\times_XY=\Spec(K)\times_UV=\coprod_{1\leq i\leq m}\Spec(L_i).
$$
Since  $f_0:V\to U$ is radicial, we get $m=1$. 
Hence, $E$ has only one irreducible component with generic point $\xi$.
Let $S$  be the henselization of $Y$ at $\xi$ and let $L$ be the fraction field of $S$. 
Since $\Spec(K)\times_XY=\Spec(L)$, we have 
$[K(Y):K(X)]=[L:K]$ and $L/K$ is purely inseparable. 
Let $\kappa_K$ and $\kappa_L$ be the residue fields of $R$ and $S$ respectively.
Since $\kappa_K$ is separably closed, the extension $\kappa_L/\kappa_K$ is purely inseparable.
Let $d$ be its degree and let $e$ be the ramification index of $L/K$.
By \cite[Proposition I.10]{CL} we have
$$
[L:K]=d \cdot e\ .
$$
Let $\overline K$ be an algebraic closure of $K$ containing $L$. 
 Since $L/K$ is purely inseparable, the canonical inclusion $\gamma:G_L\to G_K$ is a bijection. 
 Let $M$ be the $\Lambda$-module with  continuous $G_L$-action associated to $\cL|_{\Spec(L)}$ and $N$ the $\Lambda$-module with  continuous $G_K$-action associated to $(f_*j_!\cL)|_{\Spec(K)}$. 
%Since $f_0:V\to U$ is finite, surjective and radicial, we have $f_0^*f_{0*}\cF\cong \cF$, $f_{0*}\Lambda\cong
%\Lambda$. The bijection $\gamma:G_L\to G_K$ gives $N=\mathrm{Ind}^{G_L}_{G_K}M$ and $M=\mathrm{Res}^{G_L}_{G_K}N$. 
%Assume that $M$ is tame. By \cite[Theorem 1.1(ii)]{Hu21},
%we see that $\gamma(P_L)=P_K$ and that $N$ is also tame. In this case, we have 
%\begin{align}
%\mathrm{C}_X(f_*j_!\mathcal F)&=D\leq \mathfrak{f}D=f_*(\mathrm{C}_Y(j_!\mathcal F));\\
%\mathrm{LC}_X(f_*j_!\mathcal F)&=0D=f_*(\mathrm{C}_Y(j_!\mathcal F)).
%\end{align}
By \cite[Theorem 1.1]{Hu_purely_inseparable}, we have 
\begin{align*}
G^{d r}_K&\subseteq \gamma(G^{r}_L) \ \ \ \textrm{for}\ \ r\geq 1 \ ,\\
G^{d r}_{K,\log}&\subseteq \gamma(G^{r}_{L,\log}) \ \ \ \textrm{for}\ \ r\geq 0 \ . 
\end{align*}
Thus, we have 
\begin{align*}
c_K(N)  \leq d \cdot c_L(M)\ \ \ \textrm{ and }\ \ \ \lc_K(N) \leq  d \cdot  \lc_L(M) \ .
\end{align*}
Thus we deduce
$$
C(f_*j_!\cL)=c_K(N)\cdot D\leq d \cdot c_L(M)\cdot D=c_L(M)\cdot f_*E=f_*C(j_!\cL)
$$
and 
$$
LC(f_*j_!\cL)=\lc_K(N)\cdot D\leq d\cdot \lc_L(M)\cdot D=\cl_L(M)\cdot f_*E=f_*LC(j_!\cL)\ .
$$

\end{proof}

\begin{lem}\label{generic_factorization}
Let $f : Y\to X$ be a finite surjective morphism between integral schemes of finite type over $k$ where $Y$ is normal.
Then,  $f : Y \to X$   factors through  finite surjective  maps  $g : Y \to T$ and $h : T\to X$ where $T$ is a normal scheme of finite type over $k$ satisfying the following :
\begin{enumerate}\itemsep=0.2cm
\item the map $g : Y \to T$ is radicial.
\item there is a dense open subset $U\subset X$ such that the induced map $h^{-1}(U)\to U$ is étale.
\end{enumerate}

\end{lem}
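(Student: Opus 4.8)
The plan is to build $T$ as a normalization inside an intermediate field. Write $K_0:=K(X)$ and $K_1:=K(Y)$; since $f$ is finite and dominant between integral schemes, $K_1/K_0$ is a finite extension. Let $K_s$ be the separable closure of $K_0$ in $K_1$, so that $K_s/K_0$ is separable and $K_1/K_s$ is purely inseparable. I would define $T$ to be the normalization of $X$ in $K_s$, with structure morphism $h:T\to X$. As $X$ is integral of finite type over the field $k$, it is Nagata, so $h$ is finite by finiteness of normalization \cite{SP}; moreover $T$ is normal and integral with function field $K_s$, and $h$ is dominant, hence (being finite) surjective. Since $Y$ is normal, integral and finite over $X$ with function field $K_1$, it coincides with the normalization of $X$ in $K_1$, and therefore also with the normalization of $T$ in $K_1$. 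The universal property of normalization then yields a canonical morphism $g:Y\to T$ over $X$ with $f=h\circ g$; it is finite (as a morphism of finite $X$-schemes, or by cancellation from $f$ finite and $h$ separated) and dominant, hence surjective.

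For item (1), the claim is that $g$ is \emph{radicial}. I would argue locally, writing $T=\Spec A$ and $Y=\Spec B$, where $A$ is a normal domain with fraction field $K_s$ and $B$ is its integral closure in $K_1$. Because $K_1/K_s$ is finite purely inseparable, there is $q=p^e$ with $K_1^{q}\subseteq K_s$. For $b\in B$ the element $b^{q}\in K_s$ is integral over $A$, and $A$ is integrally closed in $K_s$, so $b^{q}\in A$; thus $B^{q}\subseteq A$. From this identity I would deduce that for every prime $\mathfrak p\subset A$ the set $\mathfrak q:=\{b\in B:\ b^{q}\in\mathfrak p\}$ is the unique prime of $B$ lying over $\mathfrak p$, and that the residue extension $\kappa(\mathfrak q)/\kappa(\mathfrak p)$ is purely inseparable. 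Hence $g$ is injective with purely inseparable residue field extensions, i.e. universally injective, which gives (1).

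For item (2), separability of $K_s/K_0$ forces $\Omega_{K_s/K_0}=0$, so $h$ is unramified, and in fact étale, at the generic point $\eta_T$ (flatness at $\eta_T$ is the field extension $K_0\to K_s$). Since $h$ is of finite type, its étale locus $W\subseteq T$ is open and contains $\eta_T$. Its complement $Z:=T\setminus W$ is a proper closed subset not meeting $\eta_T$; as $h$ is finite, $h(Z)$ is closed, and it avoids $\eta_X$ because $h^{-1}(\eta_X)=\{\eta_T\}$, the generic fibre of the normalization being $\Spec K_s$. Setting $U:=X\setminus h(Z)$, a dense open, one has $h^{-1}(U)\subseteq W$, so $h^{-1}(U)\to U$ is étale, which is (2).

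The step I expect to be the main obstacle is the radicial claim in item (1): everything hinges on upgrading the purely inseparable extension $K_1/K_s$ at the generic point to the global statement, and the local identity $B^{q}\subseteq A$ is exactly what is needed, since it simultaneously forces injectivity of $g$ on points and purely inseparable residue extensions. The remaining inputs—finiteness of normalization over a Nagata base, openness of the étale locus, and the generic-fibre computation—are standard and cause no real difficulty.
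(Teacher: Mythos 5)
Your proposal is correct and takes essentially the same route as the paper: both define $T$ as the normalization of $X$ in the separable closure $K_s$ of $K(X)$ in $K(Y)$, prove radiciality of $g$ by observing that $p$-power roots land in $\cO_T$ (your uniform $B^{q}\subseteq A$ with the explicit description of the unique prime above $\mathfrak p$ is a more detailed version of the paper's one-line argument), and obtain (2) by spreading out generic étaleness coming from separability of $K_s/K(X)$. The only cosmetic difference is in item (2), where the paper spreads out via vanishing of the coherent pushforward $h_*\Omega_{T/X}$ at the generic point, while you use openness of the étale locus combined with closedness of $h$ — the two mechanisms are interchangeable.
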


\begin{proof}
Let $\xi$ be the generic point of $Y$ and let $\eta$ be the generic point of $X$.
Since $f : Y\to X$ is finite surjective, we have $f^{-1}(\eta)=\xi$.
Consider the finite field extension $k(Y)/k(X)$.
There exists a unique intermediate extension 
$$
k(X)\subset k(Y)_{\sep} \subset k(Y)
$$ 
such that $k(Y)_{\sep}/k(X)$ is separable and $k(Y)/k(Y)_{\sep}$ is purely inseparable. 
Let $T$ be the normalization of $X$ in $k(Y)_{\sep}$.
Then,  $f : Y \to X$ factors through a finite surjective map $g : Y \to T$ with $k(Y)/k(T)$ purely inseparable followed by a finite surjective map $h : T\to X$ with $k(T)/k(X)$ separable. 
In particular,  $g : Y \to T$ is radicial since for every $f\in \cO_Y$, there is $n\geq 1$ such that 
$$
f^{p^n}\in k(T)\cap \cO_Y=\cO_T  \  . $$
Hence,  we are left to find a dense open subset $U\subset X$ such that the induced map $h^{-1}(U)\to U$ is étale.
By affine base change, the coherent sheaf $h_*\Omega_{T/X}$ vanishes at $\eta$.
Hence, it vanishes on a dense open neighbourhood $U\subset X$.
Since the formation of $\Omega_{T/X}$ commutes with pull-back, we deduce that $\Omega_{T/X}$ vanishes on $h^{-1}(U)$.
That is the pull-back $h^{-1}(U)\to U$ is unramified.
At the cost of shrinking $U$, we can suppose that $h^{-1}(U)\to U$ is étale.
%By \cite[Théorème 8.10.5]{EGA4-3}, there is a dense open subset $V\subset h^{-1}(U)$ such that the induced map $g^{-1}(V)\to V$ is radicial.
%Observe that $E:=h^{-1}(U)-V$ is a strict closed subset of $h^{-1}(U)$.
%Since $h_0 : h^{-1}(U)\to U$ is finite, $h(E)$ is a strict closed subset of $U$.
%Furthermore $h^{-1}(U-h(E))\subset V$.
%Hence, the open subset $U-h(E)$ satisfies (1)(2).
\end{proof}

The statement of the following theorem was kindly suggested by the referee to serve as a common generalization of \cref{Ramification_cycle_finite_direct_image} and \cref{finite_direct_radicial}.

\begin{thm}\label{finite_direct_image_lcc_case}
Let $f : Y\to X$ be a finite  morphism between normal schemes of finite type over $k$.
Let $D$ be a reduced effective Cartier divisor on $X$ and put $U:=X-D$.
Define  $E:=(D\times_X Y)^{\red}$  and put $j : V:=Y-E\hookrightarrow Y$.
For $\cL\in \Loc_{tf}(V,\Lambda)$,  we have 
$$
C(f_{*}j_!\cL)\leq C(f_{*}j_!\Lambda)+ f_* C(j_!\cL)
$$
and 
$$
LC(f_{*}j_!\cL)\leq LC(f_{*}j_!\Lambda)+ f_* LC(j_!\cL) \ .
$$
\end{thm}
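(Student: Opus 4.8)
The plan is to deduce the general statement from the two extremes already settled --- the étale case (\cref{Ramification_cycle_finite_direct_image}) and the radicial case (\cref{finite_direct_radicial}) --- by factoring $f$ through \cref{generic_factorization}. As in the proofs of those results, I first reduce to $X$ and $Y$ integral with $f$ surjective (a non-surjective finite morphism makes $f_*j_!\cL$ supported on a strict closed subset, so its conductor divisors vanish). Since the assertion is an inequality of Weil divisors on $X$, it suffices to compare multiplicities at the generic point $\eta$ of each prime divisor $Z$. The key structural observation is that $\lc_Z$ and $c_Z$ depend only on the restriction of a sheaf to $\Spec K$ (where $K$ is the fraction field of $\hat{\cO}_{X,\eta}$); hence modifying a sheaf by one whose support maps into $Z$ changes neither. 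This lets me always take $Z$ itself as the boundary divisor and work with extension by zero from $Y-f^{-1}(Z)$, regardless of whether $Z\subseteq D$. In particular, when $\eta\in U$ the sheaf $\cL$ is lisse across $f^{-1}(Z)$, the term $f_*LC(j_!\cL)$ has multiplicity $0$ at $Z$, and the claim reduces to $\lc_Z(f_*j_!\cL)\le \lc_Z(f_*j_!\Lambda)$, which is treated uniformly with the case $Z\subseteq D$.

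Localizing at $\eta$, over the perfect field $k$ I may assume $X$ is smooth there and that $Z$ is the unique prime divisor through $\eta$. Factor $f=h\circ g$ with $g:Y\to T$ finite radicial surjective, $h:T\to X$ finite surjective étale over a dense open, and $T$ normal (\cref{generic_factorization}). Since $g$ is a universal homeomorphism, $g_*$ preserves locally constant constructible sheaves of finite tor-dimension; writing $U_T:=T-(Z\times_X T)^{\red}$ and $g_0:Y-f^{-1}(Z)\to U_T$, the cartesian boundary square together with a stalk computation along $(Z\times_X T)^{\red}$ give $g_*j_!\cL\simeq (j_T)_!\cL_T$ with $\cL_T:=g_{0*}\cL\in \Loc_{tf}(U_T,\Lambda)$, and likewise $g_*j_!\Lambda\simeq (j_T)_!\Lambda$. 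Feeding $\cL_T$ into \cref{Ramification_cycle_finite_direct_image} for $h$ and $\cL$ into \cref{finite_direct_radicial} for $g$, and chaining through $f_*=h_*g_*$ and $f_*j_!\Lambda\simeq h_*(j_T)_!\Lambda$, yields
\[ LC(f_*j_!\cL)\le LC(f_*j_!\Lambda)+h_*LC((j_T)_!\cL_T)\le LC(f_*j_!\Lambda)+f_*LC(j_!\cL), \]
and identically for $C$.

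The main obstacle is that \cref{generic_factorization} only makes $h$ étale over \emph{some} dense open, whereas \cref{Ramification_cycle_finite_direct_image} requires $h$ étale over all of $X-Z$, and \cref{finite_direct_radicial} requires $(Z\times_X T)^{\red}$ to be Cartier on the merely normal scheme $T$. Both are secured by the localization at $\eta$. Since $h$ is finite surjective, $h^{-1}(\eta)$ consists of codimension $1$ points of $T$, which avoid the codimension $\ge 2$ locus $T^{\mathrm{sing}}$; thus $\eta\notin h(T^{\mathrm{sing}})$, and shrinking $X$ to discard the closed set $h(T^{\mathrm{sing}})$ makes $T$ regular near $h^{-1}(\eta)$, so the Weil divisor $(Z\times_X T)^{\red}$ is Cartier there. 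Similarly, the branch locus of $h$ is closed and each of its irreducible components other than $Z$ fails to contain the codimension $1$ point $\eta$; discarding them by a further shrinking of $X$ leaves $\eta$ untouched and makes $h$ étale over $X-Z$. After these shrinkings both input theorems apply, the displayed chain is legitimate, and the multiplicity inequality holds at $Z$. As $Z$ was an arbitrary prime divisor, the two divisor inequalities of \cref{finite_direct_image_lcc_case} follow.
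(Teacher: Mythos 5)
Your proof is correct and takes essentially the same route as the paper's: reduce to $X$, $Y$ integral with $f$ surjective, factor $f=h\circ g$ via \cref{generic_factorization}, and chain \cref{finite_direct_radicial} (for the radicial $g$) with \cref{Ramification_cycle_finite_direct_image} (for $h$, étale over the complement of the boundary). The differences are only bookkeeping: where the paper enlarges $D$ to a divisor $D_0$ absorbing the non-étale locus and argues at generic points, you check multiplicities prime divisor by prime divisor, and along the way you make explicit two points the paper leaves implicit, namely the identification $g_*j_!\cL\simeq (j_T)_!\cL_T$ with $\cL_T$ locally constant, and the regularity of $T$ near $h^{-1}(\eta)$ ensuring the boundary on $T$ is Cartier so \cref{finite_direct_radicial} applies.
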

\begin{proof}
We argue for the conductor divisor, the logarithmic case being similar.
This is a local question in a neighbourhood of the generic points of $D$.
Hence we can assume that $X$ is smooth affine over $k$.
In particular, every divisor of $X$ is an effective Cartier divisor.
By \cite[0357]{SP}, the schemes $X$ and  $Y$ are finite disjoint unions of normal  integral schemes.
Hence we can assume that $X$ and  $Y$ are  integral with $X$ smooth affine over $k$.
If $f : Y \to X$ is not surjective, then $f$ factors through a strict closed subset of $X$.
In that case, we have $C(f_{*}j_!\cL)=0$ and there is nothing to prove.
Thus, we can assume that $f : Y \to X$ is surjective.
\\ \indent
Let $U_0 \subset U$ be an  open subset  such that $D_0 := X-U_0$ is a divisor.
We argue that \cref{finite_direct_image_lcc_case} holds for $(f,D,\cL)$ if it holds for $(f,D_0,\cL|_{V_0})$.
Indeed, define  $E_0:=(D_0\times_X Y)^{\red}$  and put $j_0 : V_0:=Y-E_0\hookrightarrow Y$.
Proving the inequality
$$
C(f_{*}j_!\cL)\leq C(f_{*}j_!\Lambda)+ f_* C(j_!\cL)
$$
is a local question at the generic points of $D$.
On the other hand, if $Z\subset D_0$ is the union of the irreducible components of $D_0$ not contributing to $D$, the sheaves $j_!\cL$ and $j_{0 !}\cL|_{V_0}$ coincide on $f^{-1}(X-Z)$.
Since $X-Z$ contains every generic point of $D$, the conclusion follows.
\\ \indent
Hence  at the cost of shrinking $U$,  we can assume by \cref{generic_factorization} the existence of a commutative diagram with pullback squares
$$
	\begin{tikzcd}
		    V\arrow{d}{}     \arrow{r}  & Y \arrow{d}{g}\\
 h^{-1}(U)\arrow{d}{}    \arrow{r}  & T \arrow{d}{h}\\
   U      \arrow{r}{ }      & X
	\end{tikzcd}
$$
featuring finite surjective morphisms where $T$ is normal of finite type over $k$, where $h^{-1}(U)\to U$ is étale and $g : Y\to T$ is radicial.
By \cref{finite_direct_radicial} applied to the morphism of normal schemes $g : Y \to T$ and to $(D\times _X T)^{\red}\subset T$, we know that 
$$
C(g_*j_!\cL)\leq g_*C(j_!\cL)\ .
$$
By \cref{Ramification_cycle_finite_direct_image} applied to the morphism of normal schemes $h : T \to X$ and to  $D\subset X$, we know that 
$$
C(h_{*} g_*j_!\cL)\leq C(f_{*}j_!\Lambda)+ h_* C(g_* j_!\cL) \ .
$$
Putting everything together gives the desired inequality.
\end{proof}

\begin{thm}\label{finite_direct_image}
Let $f : Y\to X$ be a finite morphism between  schemes of finite type over $k$.
Let $\Sigma$ be a stratification of $Y$ and let $\cE\in \bQ[\Coh(Y)]$.
Then there exists $\cE'\in \bQ[\Coh(X)]$ such that for every $\cK\in D_{\Sigma,tf}^b(Y,\cE,\Lambda)$, we have $f_{\ast}\cK \in D^b_{ctf}(X,\cE',\Lambda)$.
\end{thm}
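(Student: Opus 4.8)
The plan is to fix $\Sigma$ and $\cE$ once and for all and to produce a single $\cE'$, arguing by induction on $\dim Y$. Since $f$ is finite, the functor $f_{\ast}$ is exact on étale sheaves, so $\cH^i(f_{\ast}\cK)\simeq f_{\ast}\cH^i\cK$ and each $\cH^i\cK$ is a $\Sigma$-constructible sheaf with log conductors bounded by $\cE$. It therefore suffices to find $\cE'$, depending only on $(f,\Sigma,\cE)$, such that $f_{\ast}\cF\in D^b_{ctf}(X,\cE',\Lambda)$ for \emph{every} $\Sigma$-constructible sheaf $\cF$ with log conductors bounded by $\cE$. The decisive feature of the stratification is that the non-locally-constant locus of any such $\cF$ is contained in the \emph{fixed} closed subset $Y'\subsetneq Y$ given by the union of the non-open strata of $\Sigma$; this is what will make $\cE'$ uniform in $\cF$.

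First I would carry out a sequence of reductions, in each of which the error term is a sheaf supported on a closed subset of dimension $<\dim Y$ and pushed forward by a finite map, hence bounded uniformly by the induction hypothesis and reincorporated through \cref{many_properties}-\ref{devissage_bound} and \cref{many_properties}-\ref{sequence}. Using \cref{many_properties}-\ref{reduction} and \cref{many_properties}-\ref{closed_immersion} I reduce to $X,Y$ reduced and $f$ surjective; using \cref{many_properties}-\ref{local_bound_to_global} I reduce to $X$ affine; normalizing $Y$ and comparing $\cF$ with $\nu_{\ast}\nu^{\ast}\cF$ reduces to $Y$ normal integral; and excising the singular locus of $X$, which is closed of dimension $<\dim Y$, reduces to $X$ smooth. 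Thus I may assume $X$ smooth integral affine, $Y$ normal integral, and $f$ finite surjective.

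The heart of the argument then runs as follows. I apply \cref{generic_factorization} to factor $f=h\circ g$ with $g$ radicial and $h$ étale over a dense open $U_0\subseteq X$. Since $X$ is regular affine I may choose a reduced effective Cartier divisor $D$ on $X$ containing both $f(Y')$ and $X-U_0$; note that $D$ depends only on $(f,\Sigma)$. Set $U:=X-D\subseteq U_0$, $E:=f^{-1}(D)^{\red}$ and $V:=f^{-1}(U)=Y-E$. Because $V\cap Y'=\emptyset$, the restriction $\cL:=\cF|_V$ lies in $\Loc_{tf}(V,\Lambda)$, and the localization triangle for $V\hookrightarrow Y\hookleftarrow E$ together with \cref{many_properties}-\ref{sequence} reduces me, the term $\cF|_E$ being controlled by induction, to bounding $f_{\ast}j_!\cL$. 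By finite base change $f_{\ast}j_!\cL\simeq j'_!f_{0\ast}\cL$ with $f_0\colon V\to U$; over $U\subseteq U_0$ the map $f_0$ factors as a finite radicial surjective morphism followed by a finite étale morphism, so that $f_{0\ast}\cL$ is \emph{locally constant} on $U$. This is exactly the step where the factorization is used to neutralize inseparability.

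To conclude, since $f_{0\ast}\cL\in\Loc_{tf}(U,\Lambda)$ and $X$ is smooth, \cref{bounded_ramification_ex}-(2) shows that $f_{\ast}j_!\cL=j'_!f_{0\ast}\cL$ has log conductors bounded by the conductor divisor $C(f_{\ast}j_!\cL)$. Combining \cref{inequalityLogNonLog_divisor_rem}, \cref{finite_direct_image_lcc_case} and \cref{bounded_ramification_ex}-(1) gives $C(f_{\ast}j_!\cL)\leq LC(f_{\ast}j_!\cL)+D\leq LC(f_{\ast}j_!\Lambda)+f_{\ast}T(\cE)+D$, a $\bQ$-Weil divisor depending only on $(f,\Sigma,\cE)$; realizing it by a coherent sheaf and invoking the monotonicity recorded in \cref{many_properties} then yields the desired uniform $\cE'$. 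I expect the two genuine difficulties to be, first, securing the uniformity of $\cE'$ across all $\cF$ — resolved by reading the singular locus off $\Sigma$ — and, second, upgrading the conductor-\emph{divisor} inequality of \cref{finite_direct_image_lcc_case} to the curve-tested boundedness of \cref{bounded_conductor}: the latter demands that $f_{0\ast}\cL$ be genuinely locally constant, which is precisely what \cref{generic_factorization} secures by separating the harmless radicial ramification from the étale locus.
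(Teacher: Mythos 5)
Your proposal is correct and follows the paper's strategy in all essentials: induction on $\dim Y$, reduction to a sheaf in degree zero via exactness of $f_{\ast}$, the same chain of reductions (reduced schemes, affine $X$, normalization of $Y$ through the injective unit $\cF \to \nu_{\ast}\nu^{\ast}\cF$), a fixed divisor $D$ swallowing the images of the lower-dimensional strata to secure uniformity in $\cF$, dévissage to $f_{\ast}j_!\cL$ with $\cL$ locally constant via \cref{many_properties}-\ref{devissage_bound} and the induction hypothesis on $D$, and finally \cref{finite_direct_image_lcc_case} combined with \cref{bounded_ramification_ex}. Two local differences are worth recording. First, to make $X$ smooth the paper composes with a Noether normalization $\pi : X\to \bA^d_k$ and uses the surjectivity of the counit $\pi^{\ast}\pi_{\ast}f_{\ast}\cK \to f_{\ast}\cK$ together with \cref{many_properties}-\ref{sequence}; this replaces $X$ by $\bA^d_k$ in one stroke, whereas your excision of $X^{\mathrm{sing}}$ also works but costs an extra error term handled by induction and \cref{many_properties}-\ref{devissage_bound}. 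Second, your enlargement of $D$ by the non-étale locus of the separable factor coming from \cref{generic_factorization} is a genuine refinement rather than a deviation: it guarantees $f_{0\ast}\cL \in \Loc_{tf}(U,\Lambda)$, hence that $f_{\ast}j_!\cL \simeq j'_!f_{0\ast}\cL$ is literally of the shape $j_!(\text{lcc})$ required by \cref{bounded_ramification_ex}-\ref{bounded_ramification_ex_2}. The paper's written proof chooses $D$ containing $f(Z)$ only and applies \cref{bounded_ramification_ex}-\ref{bounded_ramification_ex_2} to $f_{\ast}j_!\cL$ with this local-constancy point left implicit (there, \cref{generic_factorization} enters only inside the proof of \cref{finite_direct_image_lcc_case}), so your version makes explicit a step the paper glosses over. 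The final numerical bounds are equivalent: you reach $LC(f_{\ast}j_!\Lambda)+f_{\ast}T(\cE)+D$ via $C \leq LC + D$ from \cref{inequalityLogNonLog_divisor_rem}, while the paper reaches $C(f_{\ast}j_!\Lambda)+f_{\ast}T(\cE)+f_{\ast}E$; both depend only on $(f,\Sigma,\cE)$ and yield the desired uniform $\cE'$.
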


\begin{proof}
%We endow the closed subset ${\color{SpringGreen4}{f(Y)}}\subset X$ with its canonical reduced structure. 
%Consider the factorization $ Y\to {\color{SpringGreen4}{f(Y)}} \hookrightarrow X$ where the first map is finite and where the second map is a closed immersion.
%By \cref{many_properties}-\ref{closed_immersion}, it is enough to prove \cref{finite_direct_image} for $Y\to {\color{SpringGreen4}{f(Y)}}$.
%Hence, we can assume that  $f : Y\to X$ is finite surjective. \\ \indent
We can assume that $\cE\in \Coh(Y)$ and look for $\cE'$ inside $\Coh(X)$.
We argue by recursion on the dimension $n$ of $Y$.
If $n=0$, there is nothing to do.
Assume that $n>0$ and that \cref{finite_direct_image} holds in dimension $<n$.
Since the étale topos is insensitive to nilpotents, we can by \cref{many_properties}-\ref{reduction} assume that $X,Y$ are reduced.
By \cref{many_properties}-\ref{local_bound_to_global}, we can suppose that $X,Y$ are affine.
Let  $\cK\in D_{\Sigma,tf}^b(Y,\cE,\Lambda)$.
Since finite direct images are exact, we can suppose that $\cK$ is concentrated in degree 0.
Let $\nu :  Y^{\nu}  \to Y$ be the normalization map.
Since the unit map  $\cK \to  \nu_{\ast}\nu^{\ast}\cK$ is injective, the induced map 
$$
f_{\ast}\cK \to f_{\ast} \nu_{\ast}\nu^{\ast}\cK
$$
is an injective map of constructible sheaves.
By \cref{many_properties}-\ref{sequence}, at the cost of replacing $Y$ by $Y^{\nu}$ and  $f$ by $f\circ \nu$, we can  suppose that $X,Y$ are affine and that $Y$ is normal.
By Noether normalization lemma,  there is a finite morphism  $\pi : X\to \bA^d_k$ for some $d\geq 0$.
Observe that the counit map 
$$
\pi^*\pi_\ast f_\ast \cK \to f_\ast \cK
$$
is surjective.
By \cref{many_properties}-\ref{sequence}, at the cost of replacing $X$ by $\bA^d_k$ and $f$ by $\pi\circ f$, we can thus suppose that $X$ is affine smooth connected over $k$ and  $Y$ is affine normal.
Since a normal scheme is a disjoint union of its irreducible components,
we can further suppose that $Y$ is irreducible. 
\\ \indent
 Let $Z\subset Y$ be a strict closed subset  containing the strata of $\Sigma$ of dimension $<n$.
 Since $f : Y \to X$ is finite, we have 
$$
\dim f(Z)= \dim Z < n \leq \dim X \ .
$$
Thus, we can choose a reduced effective Cartier divisor $D\subset X$ containing $f(Z)$.
Put $D:=X-U$ and consider the following commutative diagram
$$
	\begin{tikzcd}
	    V\arrow{r}{j}   \arrow{d}{}      &   Y    \arrow{d}{f}  & \arrow{l}{i}   \arrow{d}{f_D}   E  \\
	       U           \arrow{r}    & X     & \arrow{l}  D 
	\end{tikzcd}
$$ 
with pullback squares.
Note that $\dim E=n-1<n$.
By proper base change and recursion applied to the finite morphism $f_D : E\to D$, to  $\Sigma_E:=\Sigma \cap E$ and $i^*\cE\in \Coh(E)$, there is $\cE_D\in \Coh(D)$ such that for every  $\cK\in \Cons_{\Sigma, tf}(Y,\cE,\Lambda)$, we have 
$$
(f_{\ast}\cK)|_D \simeq f_{D\ast}(\cK|_E)\in \Cons_{tf}(D,\cE_D,\Lambda) \ .
$$
By \cref{many_properties}-\ref{devissage_bound}, we can thus suppose that $\cK=j_! \cL$ where $\cL \in \Loc_{tf}(V,\Lambda)$.
In that case, we have
$$
C(f_*j_!\cL)\leq C(f_{*}j_!\Lambda) +f_* C(j_!\cL) \leq C(f_{*}j_!\Lambda) +f_*T(\cE) + f_* E:=C(f,\cE)
$$
where the first inequality is supplied by  \cref{finite_direct_image_lcc_case} and where the second inequality follows from \cref{inequalityLogNonLog_divisor_rem} and \cref{bounded_ramification_ex}-\ref{bounded_ramification_ex_1}.
By \cref{bounded_ramification_ex}-\ref{bounded_ramification_ex_2}, we deduce that $f_*j_!\cL$ has log conductors bounded by  $C(f,\cE) $, and the conclusion follows.
\end{proof}

\bibliographystyle{amsalpha} 
%on donne le style de bibliographie souhaité%
\bibliography{biblio}
%on donne le nom du fichier bib dans lequel se trouvent les références%

\end{document}